\documentclass[12pt,reqno]{amsart}

\usepackage[margin=1in]{geometry}
\usepackage[fleqn,tbtags]{mathtools}
\usepackage[shortlabels]{enumitem}
\usepackage{graphicx}
\usepackage{amssymb}
\usepackage{amsmath}
\usepackage{amsthm}
\IfFileExists{mlmodern.sty}{\usepackage{mlmodern}}{\usepackage{lmodern}}
\usepackage{eucal}
\usepackage{microtype}
\usepackage[nodisplayskipstretch]{setspace}
\usepackage{tikz}
\usetikzlibrary{arrows.meta,positioning,calc}
\usepackage{xcolor}
\usepackage[colorlinks=true,linkcolor=red,citecolor=blue,urlcolor=red,
hypertexnames=false]{hyperref}
\usepackage[nameinlink, capitalize, noabbrev]{cleveref}
\newtheorem{theorem}{Theorem}[section]
\newtheorem{proposition}[theorem]{Proposition}
\newtheorem{lemma}[theorem]{Lemma}
\newtheorem{conjecture}[theorem]{Conjecture}
\newtheorem{corollary}[theorem]{Corollary}
\theoremstyle{definition}
\newtheorem{definition}[theorem]{Definition}

\theoremstyle{remark}
\newtheorem{remark}[theorem]{Remark}

\newcommand{\PP}{\mathbb P}
\newcommand{\C}{\mathbb C}

\newcommand{\rank}{\operatorname{rank}}
\newcommand{\Span}{\mathrm{Span}}
\newcommand{\Sym}{\mathrm{Sym}}

\newcommand{\Ann}{\operatorname{Ann}}
\newcommand{\m}{\mathfrak m}

\title{A Proof of Ebenfelt's weak SOS Conjecture
}
\author[Z. Wang]{Zhiwei Wang}
\address{Zhiwei Wang: Laboratory of Mathematics and Complex Systems (Ministry of Education)\\ School of Mathematical Sciences\\ Beijing Normal University\\ Beijing 100875\\ P. R. China}
\email{zhiwei@bnu.edu.cn}

\author[C. Yue]{Chenlong Yue}
\address{Chenlong Yue: School of Mathematical Sciences\\ Beijing Normal University\\ Beijing 100875\\ P. R. China}
\email{aystl271828@163.com}

\author[X. Zhou]{Xiangyu Zhou}
\address{Xiangyu Zhou: Institute of Mathematics\\Academy of Mathematics and Systems Science\\and Hua Loo-Keng Key
	Laboratory of Mathematics\\Chinese Academy of
	Sciences\\Beijing\\100190\\P. R. China}
\address{School of
	Mathematical Sciences, University of Chinese Academy of Sciences,
	Beijing 100049, P. R. China}
\email{xyzhou@math.ac.cn}

\date{}

\begin{document}
	\begin{abstract}
		We prove Ebenfelt's SOS conjecture, which implies the Huang--Ji--Yin gap conjecture.
	\end{abstract}
	\maketitle
	\tableofcontents
	
	\section{Introduction}\label{sec:introduction}
	Let $n$ be an integar , $z=(z_1,\cdots, z_n)$ be the complex coordinates of $\mathbb C^n$ and   $A(z,\bar z)\in \mathbb{C}[z,\bar z]$ be a (real valued) Hermitian  polynomial. Then there exists a Hermitian matrix $H$ such that $A(z,\bar{z})=\mathfrak{Z}H\mathfrak{Z}^*$ , here the row vector 
	$$\mathfrak{Z}=(1,z_1,\cdots,z_n,z_1^2,\cdots,z_1z_n,\cdots,z_n^d)$$
	is a basis of the polynomials in $z$ of degree at most $d$ in left lexicographic order 
	, where $\mathfrak{Z}^*$ is the conjugate transpose of $\mathfrak{Z}$.
	When we refer to the rank of a Hermitian polynomial, we actually mean the corresponding matrix. Conversely, when talking about the properties of the matrix, we can also associate them with the Hermitian polynomial.  
	
	We write $A\in \mbox{SOS}_n$  if $A(z,\bar z)=\sum_{\mu=1}^\rho |f_\mu|^2$ is a sum of norm squares of holomorphic polynomials $f_\mu$. 	A basic fact is that a Hermitian polynomial $A(z,\bar z) \in \mbox{SOS}_n$ if and only if its associated Hermitian matrix $H$ is positive semidefinite , see for example \cite{Dan02}.  If the representation is minimal, then the $f_\mu$ are linearly independent, and $\rho$ is the rank of the Hermitian coefficient matrix of $A$. 
	
	Let  $\|z\|$ be the usual Euclidean norm in $\C ^n$. As a  Hermitian form of  Hilbert's 17th problem \cite{Hi00,Hi02,Ar27}, 
	Quillen \cite{Qu68} proved that, if a bihomogeneous Hermitian polynomial is strictly positive away from the origin, then there exists an integer $N$ such that $A(z,\bar{z})\|z\|^{2N}$ is a sum of norm squares. We call $A(z,\bar{z})\|z\|^{2N}$ the $N$-th prolongation of $A(z,\bar {z})$. Reccently, there are many interesting results on Hermitian analogue of Hilbert's 17th problem, see, for example \cite{CD96,Dan02,Dan05,Dan06,DLP07,V08,DL09,Dan11,BGS22} and the references therein.
	More recently, a related problem regarding the rank of the first prolongation of $A$ has been
	posed by Ebenfelt \cite{Ebe17}.  
	
	For $n\geq 2$, let $\kappa=\kappa(n)$ be the unique integer such that $\binom{\kappa}{2}<n\leq \binom{\kappa+1}{2}$, and put $\Lambda_n=n\kappa-\binom{\kappa}{2}-1$. 
	
	\begin{conjecture}[SOS conjecture, {\cite[Conjecture 1.2]{Ebe17}}]\label{conj: sos}
		For $n\geq 2$, let $A(z,\bar z)$ be a Hermitian polynomial such that $A(z,\bar z)\|z\|^2\in \mbox{SOS}_n$ , and  $\rho$ donotes the rank of $A(z,\bar z)\|z\|^2$. Then either 
		\begin{equation}\label{eq1-1}
			\rho\geq \Lambda_n,
		\end{equation}
		or,   for some $k\in\{0,\ldots,\kappa-1\}$,
		\begin{equation}\label{eq1-2}
			kn-\frac{k(k-1)}{2}\leq \rho\leq kn.
		\end{equation}
	\end{conjecture}
	
	The SOS conjecture is motivated by Huang’s lemma \cite{Hua99}  and the Huang–Ji–Yin (HJY)  Gap
	Conjecture \cite{HJY} on the  proper holomorphic  maps between the complex unit balls. 	For each $1 \leq k \leq\kappa-1$, define the interval
	$$
	I_k := \left[ kn + 1,\, (k+1)n - \frac{k(k+1)}{2} - 1 \right].
	$$
	
	\begin{conjecture}[Huang-Ji-Yin Gap conjecture \cite{HJY}]\label{conj: HJY}
		Let $n \geq 3, N \geq n$ and let $I_k$ ($1 \leq k \leq \kappa-1$) be as above.
		Then any proper holomorphic  map $F: \mathbb{B}^n \to \mathbb{B}^N$ which are three times  differentiable up to the boundary,   is equivalent to a map of the form $(G, 0')$ if and only if $N \in I_k$ for some $1 \leq k \leq \kappa-1$.
	\end{conjecture}
	HJY Gap conjecture is one of the major open problem in CR geometry. The "only if "  part follows from \cite[Theorem 2.8]{HJY}, also the HJY Gap conjecture holds for $k=1,2,3$ \cite{Hua99,HJX06,HJY14}. More recent advances are refered to \cite{GN24,YY26}  and the references therein. Ebenfelt \cite{Ebe17} showed that  that the HJY Gap Conjecture is a
	consequence of the SOS conjecture through a CR version of the Gauss equation.

	A brilliant lemma by Huang \cite{Hua99} shows that $\rho=0$ or $ \rho \geq n$, so the SOS conjecture holds for
	$n=2$ by Huang’s lemma. When $A(z,\bar z)$  itself is a sum of norm squares, the homogeneous case  was proved by Grundmeier and Halfpap   \cite{GH15} and was subsequently completed by Ebenfelt \cite{Ebe15}.   Gao and   Ng \cite{GN23} made a breakthrough, using geometric methods to demonstrate the existence of gaps under more general conditions. Subsequently, these methods were further developed to make progress to the HJY Gap Conjecture \cite{GN24}. 	Based on the result on the case that  $A(z,\bar z)$ is a sum of norm squares, Ebenfelt \cite{Ebe17} pointed out that an optimistic view of the situation in the conjecture would be to hope that the ``gaps'' in linear ranks predicted in \eqref{eq1-2} can only occur when \( A(z,\bar{z}) \) is  a sum of norm squares. Furthermore, if \( A(z,\bar{z}) \) is not  a sum of norm squares but \( A(z,\bar{z})\|z\|^2 \) is  a sum of norm squares, then \eqref{eq1-1} always holds. This is named the weak (alternative) SOS conjecture. 
	
	\begin{conjecture}[Weak (alternative) SOS conjecture, {\cite[Conjecture 1.5]{Ebe17}}]\label{conj: weak sos}
		For $n\geq 3$, if $A(z,\bar z) \notin \mbox{SOS}_n $ , but $A(z,\bar z)\|z\|^2 \in \mbox{SOS}_n$, then the rank of $A(z,\bar z)\|z\|^2$ is greater than or equal to $\Lambda_n$.
	\end{conjecture}
	
	As pointed out by Ebenfelt \cite{Ebe17}, one of the main difficulties in \cref{conj: weak sos} lies in characterizing when \(A(z,\bar z)\|z\|^2\) is in fact a sum of norm squares.
	
	There are several partial results towards the weak SOS conjecture.  In the case of  $n=3$ and the Hermitian metrix of $A(z,\bar z)$ is diagonal, it is proved by Brooks--Grundmeier \cite{BG21}. In the same diagonal case  and for  $4\leq n\leq 6$, it is proved by the authors in \cite{WYZ25} as well as a universal bound for $n\geq 7$.  In \cite{WYZ26}, the authors proposed a Newton-Okounkov body approach to study the weak SOS conjecture.

	The  first main result of the present paper is  the proof of  Ebenfelt's Weak SOS Conjecture in full generality. 
	
	\begin{theorem}
		\label{thm:main}
		Ebenfelt's weak SOS conjecture  holds. 
		
	\end{theorem}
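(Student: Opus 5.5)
I would reduce the problem to a statement about linear subspaces of polynomials and their ideal-theoretic closure under multiplication by the coordinates. Write $A\|z\|^2=\sum_{\mu=1}^{\rho}|f_\mu|^2$ minimally and let $V=\Span\{f_\mu\}\subset\C[z]$, so $\dim V=\rho$. On the matrix side, $A=\mathfrak Z H\mathfrak Z^*$, and $A\|z\|^2$ corresponds to the matrix $\sum_j M_j H M_j^*$, where $M_j$ is multiplication by $z_j$. Hence $A\|z\|^2\in\mbox{SOS}_n$ means that $\sum_j M_jHM_j^*\succeq 0$, and $V$ is its range.

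The first step is a reduction to the bihomogeneous case. I would decompose $A$ by bidegree and argue, via a torus action $z\mapsto e^{i\theta}z$ together with extraction of extreme bidegree parts, that it suffices to treat the top homogeneous piece. This step needs care, because non-SOS-ness might live only in off-diagonal blocks. Alternatively I would work directly with the filtration by degree and take initial forms, so that $V$ is replaced by its space of leading forms, which has the same dimension.

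Next, write $H=P-Q$ with $P,Q\succeq0$ having orthogonal ranges, and $Q\neq 0$ since $A\notin\mbox{SOS}_n$. Pick $g\neq 0$ in the range of $Q$, i.e. a polynomial $g$ in the "negative directions". The identity
\[
\sum_j |z_j|^2 A=\sum_\mu |f_\mu|^2
\]
forces the products $z_j g$ to lie in $V$ up to correction terms. More precisely, I would show $\Span\{z_1g,\dots,z_ng\}\subset V$ together with further products. The key algebraic claim I would aim for is the following: if $W\subset \Sym^d$ is the negative space of $H$ and $W\neq 0$, then $V\supseteq \m\cdot W'$ for some nonzero subspace $W'$ with $\dim W'\geq \kappa$, or else $V$ contains a space of the form $\m\cdot U$ with $U$ forced to have enough rank to trigger the Gotzmann/Macaulay growth bound. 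Macaulay's theorem on Hilbert function growth gives that for a $k$-dimensional space $U$ of forms, $\dim(\m U)\geq kn-\binom{k}{2}$, with equality in the lex case. Setting $k=\kappa$ yields $\rho\geq \kappa n-\binom{\kappa}{2}$, which exceeds $\Lambda_n$. The argument must also account for the actual count $\Lambda_n=\kappa n-\binom{\kappa}{2}-1$, where the loss of $1$ comes from cancellation with the negative part.

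The main obstacle is showing that negativity of $Q$ forces $\dim W'$ to be large, on the order of $\kappa$, and not merely $1$. The natural first attempt is a positivity argument. Restricting to a generic line or plane, a single negative direction $g$ must be compensated: $\sum_j M_jQM_j^*$ has to be dominated by $\sum_j M_jPM_j^*$. Since $\|z\|^2|g|^2$ has rank $n$, and the positive part must absorb it in every direction, the dimension counts of $\m W$ versus $\m U_P$ should force $\rho\ge \dim(\m U_P)-\dim(\m W\cap \cdots)$. I would try to run an induction on $n$ by slicing with a generic hyperplane $z_n=0$, using Huang's lemma as the base case. Taking the inequality $\Lambda_{n}-\Lambda_{n-1}\in\{\kappa,\kappa-1\}$ and showing that the rank drops by at most $\kappa$ under slicing would complete the proof. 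This rank-drop estimate under hyperplane restriction is the part I expect to be hardest.
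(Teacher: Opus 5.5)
\textbf{Verdict: genuine gap.} Your proposal does not work as written. Its central containment is false, and the slicing induction runs in the wrong direction.

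\textbf{The central containment.} You aim to show $\Span\{z_1g,\dots,z_ng\}\subset V$ for $g$ in the negative space, and more generally $V\supseteq\m\cdot W'$ with $W'\neq0$. Take $n=3$ and $A=|z_1|^4-|z_1|^2(|z_2|^2+|z_3|^2)+(|z_2|^2+|z_3|^2)^2$.
\begin{itemize}
\item Its coefficient matrix is diagonal, with entries $-1$ at $z_1z_2$ and $z_1z_3$, so $A\notin\mathrm{SOS}_3$.
\item On the other hand, $\|z\|^2A=|z_1|^6+(|z_2|^2+|z_3|^2)^3$. Hence $V=\C z_1^3\oplus\Sym^3(\Span\{z_2,z_3\})$ and $\rho=5$.
\item For $g=z_1z_2$ in the negative space, $z_1g=z_1^2z_2\notin V$.
\item The only elements of $V$ divisible by $z_1$ are multiples of $z_1^3$, and $z_2z_1^2\notin V$. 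So $V$ contains $E\cdot W'$ for no nonzero $W'$.
\end{itemize}
What the hypothesis actually gives is different in kind. First, $EG\subset EV_+$ holds for the positive and negative spaces $V_+,G$ of $A$, not for the span of the $f_\mu$. Second, there is a \emph{moving} divisibility: $L(\cdot,\xi)A(\cdot,\xi)\in V$ for every $\xi$, each element divisible by a different linear form. The paper's key step turns this moving family into genuine containments $\Sym^j(L_p^*)Q_p^j\subset V|_{L_p}$. It does so only on the members of a general (admissible) flag, by moving the hyperplane and differentiating. The residual spaces $Q_p^j$ play the role of your $W'$, and the Macaulay bounds are applied on these slices. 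Note also that your target $\kappa n-\binom{\kappa}{2}=\Lambda_n+1$ is stronger than the conjecture, and you give no account of where the $-1$ enters.

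\textbf{The slicing induction.} To pass from $n-1$ to $n$ you need a \emph{lower} bound $\rho_n-\rho_{n-1}\ge\Lambda_n-\Lambda_{n-1}$. An upper bound on the drop gives nothing. There are three further problems.
\begin{itemize}
\item Your arithmetic is off: $\Lambda_n-\Lambda_{n-1}=n$, not $\kappa$ or $\kappa-1$, when $n=\binom{\kappa}{2}+1$. For example, $\Lambda_4-\Lambda_3=4$.
\item The needed lower bound is false. In the example above, a general $2$-plane $L$ gives $V|_L=\Sym^3(L^*)$, a loss of $1<2=\Lambda_3-\Lambda_2$.
\item The inductive hypothesis may not be available on the slice. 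For $L=\{z_1=az_2+bz_3\}$, the restriction $A|_L$ is SOS exactly when $|a|^2+|b|^2\le2$.
\end{itemize}

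\textbf{How the paper proceeds instead.} It does not induct on $n$.
\begin{itemize}
\item Along simple restrictions of an admissible flag, the losses are smallest at the top and nondecreasing downward: $\delta_{n-1}\le\delta_{n-2}\le\cdots$.
\item Two equal consecutive simple losses $c$ force $V_A|_{L_p}\subset Q_p$, where $V_A=\Span_\xi A(\cdot,\xi)$. Hence either $\rank A\le c$ or $V_A\subset S_1$.
\item This is where non-SOS enters, together with $\rank A\ge\kappa+1$, which comes from $EG\subset EV_+$ and Macaulay.
\item The total count comes from the orthogonality inequality $\rho\ge\rho_p+\rho_{n-p}$.
\item Nonsimple restrictions are handled separately. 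The nonexceptional case uses the divisibility-modulo-$q$ estimate. The exceptional case uses the fact that the quadratic components of the $f_\mu$ span $E\cdot V_{A_{1,1}}$.
\end{itemize}

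\textbf{The bihomogeneous reduction.} This step is also unsupported. The top diagonal block can be SOS while $A$ is not: add $|z_1|^8$ to the example. You give no reason why some diagonal block must fail to be SOS. The paper never makes this reduction; it works throughout with initial spaces and the filtered Macaulay bound.
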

	
	The proof of \cref{thm:main} is completed in Section 7-8. 
	Combining the work of Huang \cite{Hua99}, Grundmeier-Halfpap  \cite{GH15} and Ebenfelt \cite{Ebe15} with
	\cref{thm:main} gives the complete Ebenfelt's SOS conjecture.

	\begin{theorem}
		Ebenfelt's SOS conjecture holds.
	\end{theorem}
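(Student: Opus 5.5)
The statement is the full SOS conjecture, \cref{conj: sos}. I would derive it from \cref{thm:main} together with the known results for the case $A\in\mbox{SOS}_n$. So the plan is a case split on whether $A(z,\bar z)$ is itself a sum of norm squares.

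\emph{Case $A\notin\mbox{SOS}_n$, $n\ge 3$.} Here the hypothesis $A(z,\bar z)\|z\|^2\in\mbox{SOS}_n$ is exactly the setting of \cref{conj: weak sos}. \cref{thm:main} then gives $\rho\ge\Lambda_n$, which is \eqref{eq1-1}.

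\emph{Case $n=2$.} Then $\kappa=2$ and $\Lambda_2=2\cdot 2-1-1=2$. Huang's lemma \cite{Hua99} gives $\rho=0$ or $\rho\ge n=2$. The value $\rho=0$ is \eqref{eq1-2} with $k=0$, and $\rho\ge 2$ is \eqref{eq1-1}. So this case needs no use of \cref{thm:main}.

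\emph{Case $A\in\mbox{SOS}_n$.} Write $A=\sum_{j=1}^m|g_j|^2$ minimally. Then $A\|z\|^2=\sum_{i,j}|z_ig_j|^2$, and $\rho$ is the dimension of the span of the products $z_ig_j$. For $A$ bihomogeneous, Grundmeier--Halfpap \cite{GH15} and Ebenfelt \cite{Ebe15} prove that this dimension is either at least $\Lambda_n$ or lies in one of the windows $[kn-\binom{k}{2},kn]$ for $k\le\kappa-1$.

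\emph{Reduction of the non-homogeneous SOS case.} This is the main obstacle, namely getting from general $A\in\mbox{SOS}_n$ to the bihomogeneous statement of \cite{Ebe15}. I would argue that Ebenfelt's argument in \cite{Ebe15} concerns only the linear span of the products $\{z_ig_j\}$. The dimension of that span is controlled by a generic point or top-degree initial forms: passing to leading homogeneous components in a suitable monomial order preserves the dimension of $\Span\{g_j\}$ and can only lower $\dim\Span\{z_ig_j\}$. Alternatively, I would cite \cite{Ebe17}, where the reduction to homogeneous $A$ is noted when \cref{conj: sos} is formulated. This would be checked against Ebenfelt's statement rather than reproved.

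Combining the three cases gives \cref{conj: sos}. All the real difficulty is contained in \cref{thm:main}.
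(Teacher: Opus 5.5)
Your proposal is correct and is the same combination the paper uses: Huang's lemma for $n=2$, Grundmeier--Halfpap and Ebenfelt for $A\in\mathrm{SOS}_n$, and \cref{thm:main} for $A\notin\mathrm{SOS}_n$ with $n\ge 3$. The paper settles the non-homogeneous SOS case you flag only by citation, but it also follows directly from the paper's \cref{lem:filtered-macaulay}. With $k=\dim G$, one gets $\gamma_n(k)\le\rho\le nk$ for $k\le\kappa-1$, and for $k\ge\kappa$ a $\kappa$-dimensional subspace $G'\subset G$ gives $\rho\ge\gamma_n(\kappa)=\Lambda_n+1$. Your initial-form sketch needs exactly the degree-by-degree subadditivity of $\gamma_n$ that this lemma proves.
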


	As already mentioned above, via a CR version of the Gauss equation, Ebenfelt's SOS conjecture implies the the HJY Gap conjecture. This is the second main result of the present paper.

	\begin{theorem}\label{cor:HJY}
		Huang-Ji-Yin's Gap conjecture holds.
	\end{theorem}

	\noindent\textbf{Strategy.} As pointed out by Ebenfelt, the key point to study the weak SOS conjecture is to find the  additional structure hidden in the SOS identity.

	To see this structure, fix a minimal representation
	\[
	A(z,\bar z)\|z\|^2=\sum_{\mu=1}^{\rho}|h_\mu(z)|^2,
	\qquad H=\Span\{h_1,\ldots,h_\rho\}.
	\]
	One of the key observations comes from polarization. Polarization of the SOS identity gives
	\[
	\sum_{\mu=1}^{\rho}h_\mu(z)h_\mu^*(\xi)
	=L(z,\xi)A(z,\xi),
	\qquad L(z,\xi)=\sum_{j=1}^n z_j\xi_j,
	\]
	where $h^*_\mu(\xi):=\overline{h_\mu(\bar\xi)}$. From the above identity, we observe that  \(L(\cdot,\xi)A(\cdot,\xi)\in H\) for every \(\xi\): the space \(H\) contains a family of elements divisible by a moving linear form, say $L(\cdot, \xi)$ for $\xi\in \mathbb C^n$.  
	This is done in \cref{sect: macau}. In this section, we also identified two necessary conditions from weak SOS hypothesis. The first is that \(A(z,\bar z)\) has holomorphic degree at least two. The second is that the rank of \(A(z,\bar z)\) is \(\geq \kappa+1\) (see \cref{prop:triangular-rank}).
	
	This observation motivates us to exploit this moving divisibility by restricting \(H\) successively along a general flag \(\C^n=L_n\supset L_{n-1}\supset\cdots\supset L_2\), where $L_p$($ 2\leq p\leq n$) are $p$-dimensional subspaces of $\mathbb C^n$. Define \(\rho_p=\dim(H|_{L_p})\) and \(\delta_p=\rho_{p+1}-\rho_p\). We are aiming to estimate \(\rho=\rho_n\). Note that \(\rho_n=\sum_{i=p}^{n-1}\delta_i+\rho_p\),  it suffices to estimate \(\delta_i\) and \(\rho_p\) for \(p\) small. For this, we develop the theory of admissible flags and analyse restriction dynamics along an admissible flag, where the assumption \(A(z,\bar z)\not\in \mbox{SOS}_n\) is repeatedly used to exclude many exotic cases,  this is done in \cref{sect: big cell}-\cref{sect: divi dim}. In particular, in \cref{lem:triangular-staircase}, we find that when \(n\geq 2\kappa\) (this is only possible for \(\kappa\geq 3\)), we can get a better estimate. Then we divide the estimate of \(\rho\) into two cases: \(n\geq 2\kappa\) (i.e. \(n=6\) and \(n\geq 8\)), and \(n<2\kappa\) (i.e. \(n=3,4,5,7\)).
	
	The other key  ingredient is orthogonality. For any \(p\)-plane \(L\subset\C^n\) and its orthogonal complement $L^\perp$, the polarized identity reads that, for any $z\in L, w\in L^\perp$, $\sum_{\mu=1}^\rho h_\mu	(z)\overline{h_\mu(w)}=L(z,\bar w)A(z,\bar w)=0$. This means that the subspaces $\Span\{\mathbf h(z):=(h_1(z),\cdots, h_\rho(z))\in \mathbb C^\rho:z\in L\}$ and $\Span\{\mathbf h(z):=(h_1(z),\cdots, h_\rho(z))\in \mathbb C^\rho:z\in L^\perp\}$ are orthogonal, and their dimensions equals to $\dim (H|_L)$ and  $\dim (H|_{L^\perp})$, respectively.  By choosing $L$ and $L^\perp$ general, we obtain that $\rho\ge \dim (H|_L)+\dim (H|_{L^\perp})=\rho_p+\rho_{n-p}$. In the case $n\geq 2\kappa$, estimating $\rho_\kappa$ and $\rho_{n-\kappa}$ finished the proof, this is done in \cref{sect: big dim}. The inequality is also frequently used in the proof of  the case of  the low dimensionnal cases $n=3,4,5,7$, this is done in \cref{sect: small dim}.

	\vspace{1.5em}
	\noindent\textbf{Acknowledgements.}
	The first and second authors wish to thank Professors Xiaojun Huang, Wanke Yin, Sui-Chung Ng, and Yun Gao for helpful discussions, and in particular Professor Xiaojun Huang for his invaluable encouragement, care, and support throughout the preparation of this paper.  This research is supported by the National Key R\&D Program of China (Grant Nos. 2021YFA1002600 and 2021YFA1003100). Z. Wang and X. Zhou are partially supported by the National Natural Science Foundation of China (Grant Nos. 12571085 and 12288201), respectively. Z. Wang is also supported by the Fundamental Research Funds for the Central Universities.

	\section{Macaulay Estimates and Polarized SOS Identities}\label{sect: macau}
	In this section, we collect some Macaulay-type estimates and derive the auxiliary estimates needed later from the hypotheses of the weak SOS conjecture.

	Throughout this paper, we work on the number field $\mathbb C$. 	Let $S=\C[z_1,\ldots,z_n], E=S_1$ be the $n$ dimensional space of linear forms, and let $S_d$ denote the  homogeneous part of degree $d$ ,  thus $S=\Sym(E)=\bigoplus_{d\ge 0} S_d$. For a finite dimensional polynomial space $U\subset S$, define $E \cdot U =\Span\{\ell u:\ell\in E,\ u\in U\}$ which is denoted by $EU$ for simplicity. Let $K_1(U)$ be the kernel of the multiplication map $E\otimes U\to E \cdot U$, and set
	\[
	k_U:=\dim K_1(U)=n\dim U-\dim (E U).
	\]
	
	We begin by recalling the Macaulay representation of integers. For positive integers $N$ and $d$, there is a unique sequence of integers $k_d>k_{d-1}>\cdots>k_\delta\geq\delta\geq1$ such that
	\[
	N=\binom{k_d}{d}+\binom{k_{d-1}}{d-1}+\cdots+\binom{k_\delta}{\delta}.
	\]
	This is the Macaulay representation of $N$ of order $d$. Define
	\[
	N^{\langle d\rangle}=\binom{k_d+1}{d+1}+\binom{k_{d-1}+1}{d}+\cdots+\binom{k_\delta+1}{\delta+1},
	\]
	and set $0^{\langle d\rangle}=0$ and $\binom{a}{b}=0$ if $a < b$.

	We have the following Macaulay estimate.
	
	\begin{theorem}[{\cite{Macaulay1927}}]\label{thm: mac est}
		For every $d \geq 1 $, let $U$ be a subspace of $S_d$. Then
		\[
		\mathrm{codim}(EU)\leq\mathrm{codim}(U)^{\langle d\rangle}.
		\]
	\end{theorem}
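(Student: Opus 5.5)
This is Macaulay's theorem, cited from \cite{Macaulay1927}. The paper uses it as a black box. The standard proof I would follow reduces to monomial ideals and then runs a combinatorial compression argument.

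\textbf{Step 1: reduce to a monomial subspace.} Fix a term order (degree lexicographic). Replace $U\subset S_d$ by its initial space $\mathrm{in}(U)$, the span of the leading monomials of elements of $U$.
\begin{itemize}
\item Gaussian elimination gives $\dim \mathrm{in}(U)=\dim U$.
\item Since $\mathrm{in}(\ell u)=\mathrm{in}(\ell)\,\mathrm{in}(u)$, we have $E\cdot \mathrm{in}(U)\subseteq \mathrm{in}(EU)$.
\item Hence $\dim(EU)=\dim \mathrm{in}(EU)\ge \dim(E\,\mathrm{in}(U))$.
\end{itemize}
So $\mathrm{codim}(EU)\le \mathrm{codim}(E\,\mathrm{in}(U))$, and it suffices to treat $U$ spanned by monomials.

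\textbf{Step 2: show the lex segment is extremal.} Let $V$ be the span of the first $\dim U$ monomials of degree $d$ in lex order. The key combinatorial claim is
\[
\#\,\text{(monomial shadow of } E\cdot U) \;\ge\; \#\,\text{(shadow of } E\cdot V).
\]
I would prove this by the Clements--Lindstr\"om/Green route:
\begin{itemize}
\item Induct on $n$. Decompose a monomial set by powers of $z_n$: $U=\bigoplus_j z_n^j U_j$, with $U_j\subset\C[z_1,\dots,z_{n-1}]_{d-j}$.
\item Replace each $U_j$ by a lex segment in $n-1$ variables. By the inductive hypothesis this does not increase the size of the upward shadow, since the shadow of $U$ is $\bigoplus_j z_n^j\,(U_{j-1}+E'U_j)$, where $E'$ is the span of $z_1,\dots,z_{n-1}$.
\item Iterate such compressions with respect to the different distinguished variables. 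Show that a set stable under all compressions is either a lex segment or can be handled by a direct two-variable exchange argument.
\end{itemize}
This compression step is the main obstacle. Compressed-but-not-lex configurations need a careful exchange lemma, and this is exactly where Clements--Lindstr\"om expend most of their effort.

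\textbf{Step 3: compute the growth of the lex segment.} Lex segments multiply to lex segments. Write the complementary count $c=\mathrm{codim}\,U$ in Macaulay form $c=\sum_i\binom{k_i}{i}$. This counts the final (lex-smallest) monomials of degree $d$ by peeling off the variables successively. Multiplying by $E$ raises each term to $\binom{k_i+1}{i+1}$, so the complement of $EV$ in $S_{d+1}$ has dimension exactly $c^{\langle d\rangle}$. Combining this with Steps 1 and 2 gives $\mathrm{codim}(EU)\le \mathrm{codim}(EV)=\mathrm{codim}(U)^{\langle d\rangle}$.

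Alternatively, one can invoke Green's hyperplane restriction theorem. It yields the same bound quickly by induction on $d$ and $n$ using a general linear form, and it avoids the combinatorial compression.
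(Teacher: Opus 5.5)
\textbf{Verdict: genuine gap at Step~2.}

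The paper does not prove this statement. It imports it from Macaulay as a black box, so a bare citation would match what the paper does.

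As a proof, however, your proposal has a genuine gap, and it sits exactly where the content of the theorem lies. Step~1 is fine; it is the same initial-space device the paper uses in \cref{lem:filtered-macaulay}. Step~3 is a correct computation: for an initial lex segment $V\subset S_d$, each block $\binom{k_i}{i}$ of the complementary final segment contributes exactly $\binom{k_i+1}{i+1}$ monomials to $S_{d+1}\setminus EV$, so $\mathrm{codim}(EV)=\mathrm{codim}(V)^{\langle d\rangle}$. Step~2, however, is asserted rather than proved. That step claims an initial lex segment minimizes $\dim(E\cdot U)$ among monomial subspaces $U\subset S_d$ of fixed dimension, and this is the whole theorem. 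You correctly derive the slice formula $EU=\bigoplus_j z_n^j(U_{j-1}+E'U_j)$, and you correctly observe that compressing slices does not enlarge the shadow. But the decisive step, handling sets stable under all compressions, is left as ``the main obstacle''.

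This is not a corner case that a routine exchange disposes of. Take $\C[x,y,z]_2$ with $x>y>z$ and $U=\{x^2,xy,y^2\}$.
\begin{itemize}
\item $U$ is fixed by the compressions in all three directions: every slice is already an initial lex segment in the remaining two variables.
\item $U$ is even strongly stable.
\item Yet $U$ is not the lex segment $\{x^2,xy,xz\}$: its shadow has $7$ monomials against $6$.
\end{itemize}
So iterated compression can stop at configurations like this, and you still need a genuine comparison lemma for them. The move needed here, $y^2\mapsto xz$, changes three exponents, so an unspecified ``two-variable exchange'' does not obviously cover it.

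To close the gap, either prove that lemma, as Clements--Lindstr\"om do, or actually carry out the route you mention last:
\begin{enumerate}
\item Establish Green's bound $\mathrm{codim}(U|_{h=0})\le\mathrm{codim}(U)_{\langle d\rangle}$ for a general linear form $h$, where $c_{\langle d\rangle}=\sum_i\binom{k_i-1}{i}$.
\item Multiplication by $h$ gives a map $S_d/U\to S_{d+1}/EU$ whose cokernel is the degree-$(d+1)$ part modulo $h$. This yields $\mathrm{codim}(EU)\le\mathrm{codim}(U)+\mathrm{codim}\bigl((EU)|_{h=0}\bigr)$.
\item Bound the last term by Macaulay in $n-1$ variables, together with monotonicity of $x\mapsto x^{\langle d\rangle}$.
\item Conclude with the identity $\binom{k_i}{i}+\binom{k_i}{i+1}=\binom{k_i+1}{i+1}$.
\end{enumerate}
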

	Here the codimensions of $U$ and $E U$ are taken relative to $S_d,S_{d+1}$, respectively.
	For $0\le r\le2n-1$, define
	\begin{equation}
		\gamma_n(r)=
		\begin{cases}
			nr-\binom r2, & 0\le r\le n,\\[3pt]
			\binom{n+1}{2}+(n-1)a-\binom a2, & r=n+a,\quad 1\le a\le n-1.
		\end{cases}
	\end{equation}
	
	\begin{lemma}\label{lem:filtered-macaulay}
		If $U\subset S$  and $\dim U=r\le2n-1$, then $\dim EU\ge\gamma_n(r)$.
	\end{lemma}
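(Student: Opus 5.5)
The plan is to reduce to the case where $U$ is spanned by monomials, split by degree, treat each homogeneous piece with Macaulay's theorem (\cref{thm: mac est}), and recombine using a superadditivity property of $\gamma_n$.

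\textbf{Step 1: reduction to monomials.} Fix a graded monomial order on $S$ and let $\operatorname{in}(U)$ be the span of the leading monomials of elements of $U$. Then $\dim \operatorname{in}(U)=\dim U=r$. For $u\in U$ and each variable, $\operatorname{in}(z_iu)=z_i\operatorname{in}(u)$, so $\operatorname{in}(EU)\supseteq E\cdot \operatorname{in}(U)$. Hence
\[
\dim EU=\dim\operatorname{in}(EU)\ge \dim E\cdot\operatorname{in}(U),
\]
and it suffices to treat a set $M$ of $r$ monomials, where $\dim E\cdot \Span M=|\{z_im: m\in M,\ 1\le i\le n\}|$.

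\textbf{Step 2: splitting by degree.} Write $M=\bigsqcup_d M_d$ by degree, with $r_d=|M_d|$. The sets $E\cdot M_d$ lie in the distinct degrees $d+1$, so they are disjoint and
\[
|E\cdot M|=\sum_d |E\cdot M_d| .
\]

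\textbf{Step 3: the homogeneous case.} We show that $|E\cdot M_d|\ge\gamma_n(r_d)$ whenever $r_d\le 2n-1$.

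For $d=0$ we have $r_0=1$ and the bound $n=\gamma_n(1)$ is clear. For $d\ge1$, Macaulay's estimate \cref{thm: mac est} is equivalent to the statement that among subspaces of $S_d$ of fixed dimension, the lex-segment (initial segment in lex order, $z_1>\dots>z_n$) minimizes $\dim EU$. So I compute $|E\cdot M|$ for the lex segment of size $r_d$.

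\emph{Case $r\le n$.} The lex segment is $z_1^{d-1}z_j$, $j\le r$, and the products are $z_1^{d-1}q$ with $q$ ranging over the degree-two monomials involving some $z_j$ with $j\le r$. There are $nr-\binom r2$ of these.

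\emph{Case $r=n+a$ with $1\le a\le n-1$, so $d\ge2$.} The segment adds $z_1^{d-2}z_2z_j$ for $2\le j\le a+1$. The new products are $z_1^{d-2}z_2z_jz_k$ with $k\ge2$, not already of the form $z_1^{d-1}(\cdot)$. Counting them gives $(n-1)a-\binom a2$, so the total is exactly $\gamma_n(r)$.

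The case $d=1$ with $r>n$ cannot occur, since $\dim S_1=n$. Some care is needed to phrase the Macaulay bound directly via codimensions, i.e.\ to check that $\operatorname{codim}(U)^{\langle d\rangle}$ is attained by the lex segment. I take this as the standard content of Macaulay's theorem.

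\textbf{Step 4: superadditivity.} Steps 2 and 3 give $|E\cdot M|\ge\sum_d\gamma_n(r_d)$ with $\sum_d r_d=r\le 2n-1$. It remains to prove $\gamma_n(a)+\gamma_n(b)\ge\gamma_n(a+b)$ for $a+b\le 2n-1$.

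The increment sequence $\Delta(s)=\gamma_n(s)-\gamma_n(s-1)$ is $n,n-1,\dots,1$ for $s=1,\dots,n$, followed by $n-1,n-2,\dots,1$ for $s=n+1,\dots,2n-1$. The inequality is equivalent to saying that any window of $a$ consecutive increments has sum at most $\Delta(1)+\dots+\Delta(a)=n+(n-1)+\dots+(n-a+1)$. To see this, I check that the $k$-th largest entry of any window is at most $n-k+1$:
\begin{itemize}
\item[(i)] within a single decreasing run this is immediate;
\item[(ii)] for a window straddling position $n$, it has the form $j,j-1,\dots,1,n-1,\dots$, and the sorted entries are still dominated by $n,n-1,\dots$.
\end{itemize}

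I expect this non-concavity bookkeeping at the jump $s=n\to n+1$ to be the main point needing care. The rest follows from Macaulay's theorem and the initial-ideal reduction.
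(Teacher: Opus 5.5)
Your proposal is correct and follows essentially the same route as the paper: reduce to the initial space, split by degree, apply Macaulay's estimate to each homogeneous piece, and finish with the subadditivity inequality $\gamma_n(a)+\gamma_n(b)\ge\gamma_n(a+b)$ (which you mislabel ``superadditivity''); your lex-segment count is equivalent to the paper's Macaulay-representation computation, and your increment-window argument spells out the paper's ``direct calculation.''
In Step 4 you should state explicitly that, by symmetry and $a+b\le 2n-1$, you may assume $a\le n-1$, because the bound ``$k$-th largest entry $\le n-k+1$'' and the formula $\Delta(1)+\dots+\Delta(a)=n+(n-1)+\dots+(n-a+1)$ both fail for windows of length $a\ge n+1$.
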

	\begin{proof}
		First,  suppose that  $U$ is homogeneous, and that $U\subset S_d$. If $d=0$, then $r\leq1$ and $\dim E \cdot U=nr=\gamma_n(r)$, so there is nothing to prove. Assume henceforth that $d\geq1$. For $r\leq n$, the desired estimate is \cite[Proposition 3 ]{GH15}, since both cases  follow from the same calculation, for completeness,  we include the proof here. Write
		\[\dim S_d=\binom{n+d-1}{d}=\sum_{i=1}^d\binom{n+i-2}{i}+1,\]
		where an empty sum is understood to be zero. For $1\leq r\leq n$ and $n<r=n+a\leq2n-1$, respectively, the Macaulay representations of the codimension are
		\begin{align*}\mathrm{codim}(U)&=\dim S_d-r=\sum_{i=2}^d\binom{n+i-2}{i}+\binom{n-r}{1},\\
			\mathrm{codim}(U)&=\dim S_d-(n+a)=\sum_{i=3}^d\binom{n+i-2}{i}+\binom{n-1}{2}+\binom{n-1-a}{1}.\end{align*}
		Applying \cref{thm: mac est} and subtracting the resulting upper bound from $\dim S_{d+1}$ gives, in the two ranges, $\dim EU\geq nr-\binom r2$ and
		\(
		\dim EU\geq\binom{n+1}{2}+(n-1)a-\binom a2,
		\)
		respectively. Thus $\dim EU\geq\gamma_n(r)$ in the homogeneous case.
		
		Now suppose that $U$ is nonhomogeneous, and fix a lexicographic monomial order, the estimate considered here is a special case of the corresponding estimate in the work of \cite{Gre15}. Apply  elimination to a basis $f_1,\dots,f_r$ of $U$ to obtain distinct leading monomials $m_i=\mathrm{in}(f_i)$, and define the initial space by
		\[\mathrm{in}(U):=\Span\{\mathrm{in}(f):f\in U\}=\Span\{m_1,\ldots,m_r\}.\]
		The initial space is independent of the choice of basis and its dimensin  equals  $\dim (U)$. Every monomial in $E \cdot \mathrm{in}(U)$ occurs as the leading monomial of an element of $EU$. Hence $E \cdot \mathrm{in}(U)\subset\mathrm{in}(EU)$ and $$\dim(E \cdot \mathrm{in}(U)) \leq \dim (\mathrm{in}(EU)) = \dim EU. $$ Let $\mathrm{in}(U)_d=\mathrm{in}(U)\cap S_d$ and write $\dim\mathrm{in}(U)_d=r_d$. Since it is spanned by monomials, the initial space decomposes by degree as $\mathrm{in}(U)=\bigoplus_{d\geq0}\mathrm{in}(U)_d$.
		Grouping the $m_i$ by total degree, the spaces $E \cdot \mathrm{in}(U)_d$ lie in distinct degrees, so
		\begin{align*}
			\dim (E \cdot \mathrm{in}(U)) = \sum_{d \geq 0} \dim \big( E \cdot \mathrm{in}(U)_d \big)
			\geq\sum_{d \geq 0}\gamma_n(r_d).
		\end{align*}
		The function $\gamma_n(r)$ is subadditive, a direct calculation verifies that
		\[\gamma_n(a+b)\leq\gamma_n(a)+\gamma_n(b),\qquad a+b\leq2n-1,\quad a,b\geq0.\]
		Therefore $\dim EU\geq\sum_d\gamma_n(r_d)\geq\gamma_n(\sum_dr_d)=\gamma_n(r)$.
	\end{proof}

	We also need an estimate without a dimension restriction. The following lemma follows readily from the initial space and degree decomposition argument.
	
	\begin{lemma}\label{lem:two-variable}
		If $0\ne U\subset\C[x,y]$ is finite dimensional, then $\dim(xU+yU)\ge\dim U+1$.
	\end{lemma}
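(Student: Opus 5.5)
I would reduce \cref{lem:two-variable} to the monomial case, exactly as in the nonhomogeneous part of the proof of \cref{lem:filtered-macaulay}. Fix a monomial order on $\C[x,y]$ (lexicographic with $x>y$). Since $U$ is finite dimensional, Gaussian elimination on a basis gives the initial space $\mathrm{in}(U)$, spanned by $r=\dim U$ distinct monomials. Every monomial of $x\,\mathrm{in}(U)+y\,\mathrm{in}(U)$ is the leading monomial of an element of $xU+yU$: indeed $\mathrm{in}(xf)=x\,\mathrm{in}(f)$ and $\mathrm{in}(yf)=y\,\mathrm{in}(f)$, because multiplication by a monomial preserves the order. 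Hence
\[
\dim(xU+yU)=\dim\mathrm{in}(xU+yU)\ge \dim\bigl(x\,\mathrm{in}(U)+y\,\mathrm{in}(U)\bigr).
\]
So it suffices to treat $U=\mathrm{in}(U)$ spanned by a set $M$ of $r\ge1$ monomials.

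Next I would decompose by total degree. Write $M=\bigsqcup_d M_d$ with $M_d$ the monomials of degree $d$, and let $D=\{d:M_d\neq\emptyset\}$, which is nonempty. The spaces $x\Span M_d+y\Span M_d$ lie in distinct degrees $d+1$, so the dimension is additive over $d$. It therefore suffices to show that for a nonempty set $M_d$ of degree-$d$ monomials, the set $xM_d\cup yM_d$ has at least $|M_d|+1$ elements. Summing over $d\in D$ then gives $\dim(xU+yU)\ge r+|D|\ge r+1$.

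For the homogeneous step, identify the degree-$d$ monomial $x^iy^{d-i}$ with the integer $i\in\{0,\dots,d\}$. Multiplication by $x$ sends $i\mapsto i+1$ and multiplication by $y$ sends $i\mapsto i$ (as exponents of $x$ in degree $d+1$). For a nonempty finite set $I\subset\mathbb Z$, the set $I\cup(I+1)$ contains $I$ together with $\max I+1\notin I$, so $|I\cup(I+1)|\ge|I|+1$.

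There is no real obstacle here. The only point needing care is the justification that leading monomials of products behave multiplicatively, so that $x\,\mathrm{in}(U)+y\,\mathrm{in}(U)\subset\mathrm{in}(xU+yU)$. This is the same fact already used in the proof of \cref{lem:filtered-macaulay}.
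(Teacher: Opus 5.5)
Your proposal is correct and follows essentially the same route as the paper: you reduce to the monomial initial space, use additivity over total degree, and observe that in each degree the top element of $xM_d$ (your $\max I+1$) cannot lie in $yM_d$. Encoding the degree-$d$ monomials by their $x$-exponent, so that the claim becomes $|I\cup(I+1)|\ge|I|+1$, is a cleaner phrasing of the paper's ``divide by $y$'' argument.
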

	\begin{proof}
		Fix a monomial order $x \succ y$ and let $\operatorname{in}(U)$ be the initial
		space of $U$. As we showed above 
		$\dim(xU+yU)\ge
		\dim(x\,\operatorname{in}(U)+y\,\operatorname{in}(U))$.
		It is therefore enough to prove the assertion when $U$ is spanned by
		monomials.
		
		Write $\operatorname{in}(U)=\bigoplus_d U_d$, where $U_d$ is spanned
		by monomials of degree $d$, and put $r_d=\dim U_d$. The spaces
		$xU_d+yU_d$ lie in distinct homogeneous parts as $d$ varaies , so their dimensions add: $\dim(x\,\operatorname{in}(U)+y\,\operatorname{in}(U)) = \sum_d \dim(x\,\operatorname{in}(U)_d+y\,\operatorname{in}(U)_d)$.
		
		Fix $d$ with $r_d>0$. The degree-$d$ monomials in two variables form
		the ordered chain
		$x^d\succ x^{d-1}y\succ \ldots \succ y^d$.
		Let $M_d$ be the set of the $r_d$ monomials spanning $U_d$. Both
		$xM_d$ and $yM_d$ contain $r_d$ monomials. The largest monomial in $xM_d$ does not belong to $yM_d$: otherwise, dividing by $y$ would produce a monomial in $M_d$ larger than the largest monomial of $M_d$. Therefore
		\[
		|xM_d\cup yM_d|
		\ge |yM_d|+1=r_d+1.
		\]
		
		Summing over all degrees for which $U_d\ne0$, and denoting their
		number by $s$, gives
		$\dim(x\,\operatorname{in}(U)+y\,\operatorname{in}(U))
		\ge\dim U+s$.
		Since $U\ne0$, we have $s\ge1$, and hence
		$\dim(xU+yU)\ge\dim U+1$.
	\end{proof}
	
	Now we try to start from the conditions of the weak SOS conjecture and uncover the mathematical information implicit in them.
	Take a minimal inertia decomposition of $A \notin SOS_n$:
	\[
	A(z,\bar z)
	=
	\sum_{\alpha=1}^{P}|f_\alpha(z)|^2
	-
	\sum_{a=1}^{N}|g_a(z)|^2,
	\]
	and let $V=\Span\{f_\alpha\}$ and $G=\Span\{g_a\}$. Minimality means that the
	$f_\alpha$ and $g_a$ are jointly linearly independent. Consequently, $V\cap G=0$,
	$P+N=\rank A$, and $N\ge1$.
	
	For the first prolongation considered here, we recall a standard fact  : if $A(z,\bar z)\|z\|^2$ is a Hermitian sum of squares, then
	\begin{equation}\label{eq:standard-inclusion}
		EG\subset EV.
	\end{equation}
	Its bihomogeneous version see  \cite[Proposition~1]{BGS22} , the bihomogeneous version at arbitrary prolongation order appears in \cite[Lemma~1]{GH15} and  more general form goes back to \cite{Dan05}. We give a  proof for completeness.
	\begin{lemma}
		\label{lem:first order-prolongation}
		If
		\(A(z,\bar z)\|z\|^2\in\mathrm{SOS}_n,\)
		then $EG\subset EV.$	
	\end{lemma}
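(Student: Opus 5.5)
The plan is to translate the SOS hypothesis into a positive semidefinite matrix identity and read off the inclusion from a range argument.

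\textbf{Step 1 (matrix form).} Write the minimal representation $A(z,\bar z)\|z\|^2=\sum_{\mu=1}^\rho|h_\mu(z)|^2$. By the inertia decomposition,
\[
A(z,\bar z)\|z\|^2=\sum_{j,\alpha}|z_jf_\alpha|^2-\sum_{j,a}|z_jg_a|^2 .
\]
Consequently
\[
\sum_{j,\alpha}|z_jf_\alpha|^2=\sum_\mu|h_\mu|^2+\sum_{j,a}|z_jg_a|^2 .
\]

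\textbf{Step 2 (compare Gram matrices).} Fix a basis $\mathfrak Z$ of the monomials up to the relevant degree. Each side is then of the form $\mathfrak Z M\mathfrak Z^*$, and by uniqueness of the Hermitian coefficient matrix the two matrices coincide. On the left, $M_L=\sum_{j,\alpha}c_{j\alpha}c_{j\alpha}^*$, where $c_{j\alpha}$ is the coefficient vector of $z_jf_\alpha$. Its range is therefore the span of the coefficient vectors of $EV$.

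On the right, $M_R=\sum_\mu v_\mu v_\mu^*+\sum_{j,a}d_{ja}d_{ja}^*$ is a sum of positive semidefinite terms. For positive semidefinite matrices, the range of a sum contains the range of each summand. Hence each $d_{ja}$, the coefficient vector of $z_jg_a$, lies in $\operatorname{range}(M_R)=\operatorname{range}(M_L)$. This gives $z_jg_a\in EV$ for all $j,a$, and so $EG\subset EV$.

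\textbf{Step 3 (the range fact).} The only nonformal point is the claim that $\operatorname{range}(B)\subset\operatorname{range}(B+C)$ for $B,C\succeq0$. To prove it, take $x\in\ker(B+C)$. Then
\[
0=x^*(B+C)x=x^*Bx+x^*Cx,
\]
and both terms are nonnegative, so $Bx=0$. Thus $\ker(B+C)\subset\ker B$, and taking orthogonal complements gives $\operatorname{range}(B)\subset\operatorname{range}(B+C)$.

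The main care needed is bookkeeping rather than any real obstacle. One must check that matching Hermitian polynomials forces equal coefficient matrices once a common monomial basis is fixed; this is the basic fact recalled in the introduction. One must also check that the range of $\sum_i c_ic_i^*$ is exactly the span of the $c_i$. No homogeneity assumption and no linear independence of the $f_\alpha$ or $g_a$ is needed.
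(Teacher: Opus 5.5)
Your proof is correct and is essentially the paper's argument. The paper writes the coefficient matrix of $A\|z\|^2$ as $M=T_VT_V^*-T_GT_G^*\ge 0$ and tests it on $h\perp EV$ to get $T_G^*h=0$. That is exactly your inclusion $\ker(B+C)\subset\ker B$ with $B+C=T_VT_V^*$, only phrased with the negative part moved to the other side.
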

	\begin{proof}
		Choose a  finite dimension monomial subspace (i.e. spanned by monomials) \(W \subset S \) containing $EV + EG$  , and equip
		\( W\) with the Hermitian inner product for which its monomial
		basis is orthonormal. Declare the bases
		\(z_j\otimes f_\alpha\)
		of \(E\otimes V\) and
		\(z_j\otimes g_a\)
		of \(E\otimes G\) to be orthonormal, and let	
		\[
		T_V:  E \otimes V \longrightarrow W,
		\qquad
		T_G:  E \otimes G \longrightarrow W,
		\]
		In the bases just specified , let \(T_V\) and
		\(T_G\) be the matrices whose columns are the coefficient vectors of
		the polynomials \(z_jf_\alpha\) and \(z_jg_\beta\) in terms of monomial basis of $W$, respectively.
		Then
		\[
		A(z,\bar z)\|z\|^2
		=
		\mathbf m(z)^T
		\bigl(T_VT_V^*-T_GT_G^*\bigr)
		\overline{\mathbf m(z)},
		\]
		where \(\mathbf m(z)\) is the column vector of monomial basis of $W$, and $T_V^*, T_G^*$ are conjugate transpose of the matrices $T_V, T_G$, respectively. Hence the coefficient Hermitian matrix of
		\(A(z,\bar z)\|z\|^2\) is
		\[
		M=T_VT_V^*-T_GT_G^*.
		\]
		By hypothesis, \(A(z,\bar z)\|z\|^2\) is a Hermitian sum of squares, hence
		\(M\ge0\).
		Now take \(h\in(EV)^\perp\). Since \(\operatorname{Range}T_V=EV\), we have
		\(T_V^*h=0\). Therefore
		\[
		0\le \langle Mh,h\rangle
		=
		\|T_V^*h\|^2-\|T_G^*h\|^2
		=
		-\|T_G^*h\|^2.
		\]
		It follows that \(T_G^*h=0\), and hence
		\(h\in(EG)^\perp\). Thus
		\(
		(EV)^\perp\subset(EG)^\perp.
		\)
		Taking orthogonal complements gives \cref{eq:standard-inclusion}.
	\end{proof}
	
	For a holomorphic polynomial \(f\), write
	\(f^*(\xi)=\overline{f(\bar\xi)}\), and denote by \(A(z,\xi)\) the
	polarization of \(A(z,\bar z)\). Fix a minimal representation
	\(A\|z\|^2=\sum_{\mu=1}^{\rho}|h_\mu|^2\), and let
	\begin{equation}
		H=\Span\{h_1,\ldots,h_\rho\},
	\end{equation}
	so that \(\dim H=\rho\). With
	\(L(z,\xi)=\sum_{j=1}^n z_j\xi_j\), polarization gives
	\begin{equation}\label{eq:polarization}
		\sum_{\mu=1}^\rho h_\mu(z)h_\mu^*(\xi)=L(z,\xi)A(z,\xi).
	\end{equation}

	We first remove a global common factor. Let
	\(q=\gcd(h_1,\ldots,h_\rho)\), and write \(h_\mu=qh'_\mu\). The polarized
	identity becomes
	\[
	L(z,\xi)A(z,\xi)=q(z)q^*(\xi)\sum_\mu h'_\mu(z)(h'_\mu)^*(\xi).
	\]
	For \(n\ge2\), the bilinear polynomial \(L(z,w)\) is irreducible. Moreover, it
	cannot divide \(q(z)q^*(\xi)\), because neither factor involves both groups
	of variables. Euclid's lemma therefore shows that \(L(z,w)\) divides the final
	sum. Write
	\[
	\sum_\mu h'_\mu(z)(h'_\mu)^*(\xi)=L(z,\xi)A_0(z,\xi).
	\]
	The numerator and \(L(z,w)\) are Hermitian under interchange of the two
	polarized variable groups, so \(A_0\) is Hermitian. Canceling \(L\) gives
	\[
	A=|q|^2A_0,
	\qquad
	A_0\|z\|^2=\sum_{\mu=1}^{\rho}|h'_\mu|^2.
	\]
	Indeed,
	\(A(\mathord\cdot,\xi)=q(\cdot) q^*(\xi)A_0(\mathord\cdot,\xi)\). Restricting
	\(\xi\) to the dense open set \(\{q^*(\xi)\neq0\}\) does not change the
	span of the values of the polynomial family
	\(A_0(\mathord\cdot,\xi)\): any linear functional that vanishes on that
	dense set vanishes identically. It follows that the coefficient spaces of \(A\) and
	\(A_0\) satisfy
	\[
	\Span_\xi A(\mathord\cdot,\xi):=\Span\{A(\cdot,\xi): \xi\in \mathbb C^n\}
	=q\Span\{A_0(\cdot,\xi): \xi\in \mathbb C^n\}=:\Span_\xi A_0(\mathord\cdot,\xi).
	\]
	Since the coefficient rank of a Hermitian polynomial
	equals the dimension of its coefficient space, multiplication by the
	nonzero polynomial \(q\) preserves \(\rank A\). It is also injective, so
	the \(h'_\mu\) remain linearly independent and
	\[
	\rank(A_0\|z\|^2)=\rank(A\|z\|^2)=\rho,
	\qquad
	\rank A_0=\rank A.
	\]
	Furthermore, \(A_0\notin\mathrm{SOS}_n\), since otherwise
	\(A=|q|^2A_0\) would belong to \(\mathrm{SOS}_n\). Thus replacing \(A\) by
	\(A_0\) preserves every rank and hypothesis used below. We may therefore
	assume that \(H\) is primitive, meaning that its elements have no
	nonconstant common factor.
	
	Since \(A(z,\bar z)\|z\|^2\) vanishes at \(z=0\), every \(h_\mu\)
	vanishes at the origin, so \(H\subset\m=(z_1,\ldots,z_n)\). For each \(\xi\in \mathbb C^n\), the
	standard element \(L(z,\xi)A(z,\xi)\) belongs to \(H\). These elements span
	\(H\): indeed, if the vectors
	\((h_1^*(\xi),\ldots,h_\rho^*(\xi))\) failed to span \(\C^\rho\) as
	\(\xi\) varies, there would be a nonzero linear relation among the
	\(h_\mu\).

	Let
	\begin{equation}
		V_A=\Span_\xi A(\cdot,\xi).
	\end{equation}
	
	Then \(\dim V_A=\rank A\), and \(V_A\) is primitive: a nonconstant common
	factor of \(V_A\) would divide every standard element and hence every
	element of \(H\). Denote by \(A_{d,d}\) the component of \(A\) of bidegree
	\((d,d)\), and by \(h_{\mu,d}\) the degree \(d\) homogeneous component of
	\(h_\mu\). Taking the component of bidegree \((d+1,d+1)\) in
	\eqref{eq:polarization} gives
	\begin{equation}\label{eq:homogeneous-polarized-identity}
		L(z,\xi)A_{d,d}(z,\xi)=\sum_{\mu=1}^\rho h_{\mu,d+1}(z)h_{\mu,d+1}^*(\xi).
	\end{equation}
	\begin{remark}\label{remark:degree}
		A pointwise nonnegative Hermitian polynomial of holomorphic degree at most one has a positive semidefinite coefficient matrix and is itself a sum of squared norms. The sum of squares hypothesis implies that $A(z,\bar z)$ is pointwise nonnegative. Thus, in the weak SOS setting, $A$ has holomorphic degree at least two. In other words, if $A\not\in \mbox{SOS}_n$, then $V_A\not\subset S_1$.
	\end{remark}
	For $n\ge2$, let $\kappa=\kappa(n)$ be the unique integer satisfying $\binom \kappa2<n\le\binom{\kappa+1}2$.  The homogeneous case of the following propositon with negative index $N=1$ is \cite[Proposition~4]{GH15}.
	
	\begin{proposition}\label{prop:triangular-rank}
		Under the hypotheses of \cref{thm:main}, one has $\rank A\ge \kappa+1$.
	\end{proposition}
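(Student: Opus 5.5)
The plan is a contradiction argument that combines the prolongation inclusion \cref{lem:first order-prolongation} with the Macaulay-type lower bound \cref{lem:filtered-macaulay}. Take the minimal inertia decomposition $A=\sum_{\alpha=1}^P|f_\alpha|^2-\sum_{a=1}^N|g_a|^2$ fixed above. Here $V=\Span\{f_\alpha\}$ and $G=\Span\{g_a\}$, with $V\cap G=0$, $P+N=\rank A$, and $N\ge1$ because $A\notin\mathrm{SOS}_n$. Suppose, for contradiction, that $\rank A\le\kappa$. Since $N\ge1$, this gives $P\le\kappa-1$.

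Pick any nonzero $g\in G$ and set $U=V+\C g$. Because the $f_\alpha$ and $g_a$ are jointly independent, $\dim U=P+1$. By \cref{lem:first order-prolongation} we have $Eg\subset EG\subset EV$, so
\[
EU=EV+Eg=EV .
\]
In particular $\dim EU=\dim EV\le n\dim V=nP$, since $EV$ is the image of $E\otimes V$.

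Next we check that \cref{lem:filtered-macaulay} applies to $U$, which needs $\dim U\le 2n-1$. We have $P+1\le\kappa$, and $\kappa\le n$ for every $n\ge2$: the case $n=2,3$ gives $\kappa=2$, and in general $\binom{\kappa}{2}<n$ forces $\kappa\le n$. Since $P+1\le n$, the lemma falls in its first range and gives
\[
\dim EU\ge\gamma_n(P+1)=n(P+1)-\binom{P+1}{2}.
\]
Comparing this with $\dim EU\le nP$ yields $n\le\binom{P+1}{2}$. But $P+1\le\kappa$, so $\binom{P+1}{2}\le\binom{\kappa}{2}<n$ by the definition of $\kappa$. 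This is a contradiction, and hence $\rank A\ge\kappa+1$.

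I expect no serious obstacle; all of the substance is already in \cref{lem:first order-prolongation} and \cref{lem:filtered-macaulay}. Two points need care. First, $U$ may be nonhomogeneous, which is why I invoke the filtered version of the Macaulay estimate rather than the homogeneous one. Second, the degenerate case $P=0$ is covered by the same argument: then $EV=0$, while $Eg\ne0$, so the inclusion $Eg\subset EV$ already fails.
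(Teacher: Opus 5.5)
Your argument is correct and is essentially the paper's proof: both combine the inclusion $EG\subset EV$ from \cref{lem:first order-prolongation} with the first range of \cref{lem:filtered-macaulay}, and both close with $\binom{\cdot}{2}\le\binom{\kappa}{2}<n$. The only difference is minor: the paper applies the estimate to all of $W=V\oplus G$ and uses $k_W=k_V+nN\ge n$, whereas you adjoin a single $g\in G$ to $V$, which incidentally gives the slightly sharper conclusion $P\ge\kappa$.
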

	
	\begin{proof}
		Let $W=V\oplus G$ and $r_A=\dim W=\rank A$. By \cref{lem:first order-prolongation}, we have $EW=EV$, and therefore
		\[
		k_W=n(P+N)-\dim EW=nP-\dim EV + nN=k_V+nN\ge n.
		\]
		Suppose, to the contrary, that $r_A \le \kappa$. Since $\binom \kappa2<n$, we have $\kappa\le n$, so the first branch of \cref{lem:filtered-macaulay}  gives $\dim EW\ge nr_A-\binom{r_A}{2}$. Hence $k_W\le\binom{r_A}{2}\le\binom \kappa2<n$, a contradiction.
	\end{proof}

	\section{Big Cells and Genericity }\label{sect: big cell}
	
	As mentioned in \cref{sect: macau}, the space \(H\) is spaned by the
	standard elements \(L(z,\xi)A(z,\xi)\)  for all \(\xi\in \mathbb C^n\). This motivates us to exploit this moving divisibility by restricting \(H\) successively along a general flag \(\C^n=L_n\supset L_{n-1}\supset\cdots\supset L_2\), where $L_p, 2\leq p\leq n$, are $p$-dimensional subspaces of $\mathbb C^n$. 
	In this section, we   collect  some basic properties of flag manifolds and prepare the mathematical language and tools for studying these sequences of restrictions.
	We refer to \cite{Bri05} for the relevant background. 
	
	Let $V$ be an $n$ dimensional complex vector space, with $n\ge3$.  We
	write
	$$
	\mathcal F(V)=\operatorname{Fl}(n-1,\ldots,3,2;V)
	$$
	for the manifold of flags
	$F=(L_{n-1},\ldots,L_2)$ satisfying
	$V=L_n\supset L_{n-1}\supset\cdots\supset L_2$ and
	$\dim L_p=p$.  Then  $\mathcal F(V)$ is a  closed
	subvariety of
	$\prod_{p=2}^{n-1}\operatorname{Gr}(p,V)$ and 
	an irreducible projective algebraic variety.
	
	We say that a function depend regularly on  the parameters if, locally on the parameter space, it is a quotient of two polynomial functions whose denominator does not vanish.
	
	\medskip
	Fix a basis $e_1,\ldots,e_n$ of $V$, with dual basis
	$z_1,\ldots,z_n$ of $V^*$, and put
	$L_p^0=\Span \{e_1,\ldots,e_p\}$.
	For $2\le p\le n-1$, let
	$$
	\operatorname{pr}_p:V\longrightarrow L_p^0
	$$
	denote the   projection  along $\Span\{e_{p+1},\ldots,e_n\}$.
	
	Consider the natural regular projection
	$$
	\pi_p:\mathcal F(V)\longrightarrow\operatorname{Gr}(p,V),
	\qquad
	F\longmapsto L_p.
	$$
	Let $\mathcal C_p\subset\operatorname{Gr}(p,V)$ be the
	Grassmannian big cell, namely the set of $p$-planes $L$ for which
	$\operatorname{pr}_p|_{L}$ is an isomorphism, this  is equivalant to  the determinant of the first $p$ rows of
	homegeneous coordinates of $L$ in $Gr(p,V) $ is nonzero. Thus, $\mathcal{C}_p$ is a nonempty Zariski open set in $\operatorname{Gr}(p,V)$.
	
	We define the \textbf{ big cell} $\mathcal C\subset\mathcal F(V)$ by
	\[ \mathcal{C}= \bigcap_{p=2}^{n-1} \pi_p^{-1}(\mathcal{C}_p).\]
	
	Since $\pi_p$ is a regular map, the inverse image
	$\pi_p^{-1}(\mathcal C_p)$ is a nonempty Zariski open subset of
	$\mathcal F(V)$. Taking the finite intersection over all relevant
	$p$ yields a Zariski open subset $\mathcal C\subset\mathcal F(V)$.
	Moreover, $\mathcal C$ contains the standard flag
	$L_2^0\subset\cdots\subset L_{n-1}^0$, and hence is nonempty.
	Finally, since $\mathcal F(V)$ is irreducible, every nonempty Zariski
	open subset is dense, so $\mathcal C$ is dense in $\mathcal F(V)$.
	
	The same condition gives explicit affine coordinates.  For
	$F\in\mathcal C$, there is a unique basis
	$v_1(F),\ldots,v_n(F)$ of $V$ such that
	$L_p=\Span\{v_1(F),\ldots,v_p(F)\}$ for
	$2\le p\le n-1$ and whose coefficient matrix
	$B(F)$ relative to $e_1,\ldots,e_n$ is lower unitriangular  , the $j$th column of $B(F)$ is the coordinate
	vector of $v_j(F)$ and $b_{21}=0$:
	\begin{equation}\label{eq:big cell}
		B(F)=
		\begin{pmatrix}
			1&0&0&\cdots&0\\
			0&1&0&\cdots&0\\
			b_{31}&b_{32}&1&\cdots&0\\
			\vdots&\vdots&\ddots&\ddots&\vdots\\
			b_{n1}&b_{n2}&\cdots&b_{n,n-1}&1
		\end{pmatrix}.
	\end{equation}
	
	Conversely, every matrix of the  form displayed in (\ref{eq:big cell}) determines a flag in
	$\mathcal C$ by taking successive column spans.  Hence the entries
	$b_{ij}$, with $3\le i\le n$ and $1\le j<i$, are affine coordinates
	on $\mathcal C$.  In particular,
	$\mathcal C\simeq  {\mathbb{C}}^{n(n-1)/2-1}$.
	
	Let
	$\ell_1(F),\ldots,\ell_n(F)$ be the dual coframe.  Then
	$\ell_i(F)(v_j(F))=\delta_{ij}$ for all $i,j$. Write
	\[
	\begin{pmatrix}
		\ell_1(F)\\
		\vdots\\
		\ell_n(F)
	\end{pmatrix}
	=
	B^\vee(F)
	\begin{pmatrix}
		z_1\\
		\vdots\\
		z_n
	\end{pmatrix}.
	\]
	Thus the $i$th row of $B^\vee(F)$ is the coefficient vector of
	$\ell_i(F)$ in the fixed dual basis.
	Duality gives
	\[
	B^\vee(F)B(F)=I.
	\]
	Hence
	\(
	B^\vee(F)=B(F)^{-1}.
	\)
	Since $B(F)=I+N(F)$ with $N(F)$ strictly lower triangular and hence
	nilpotent, its inverse is the finite sum
	\[
	B(F)^{-1}
	=
	I-N(F)+N(F)^2-\cdots+(-1)^{n-1}N(F)^{n-1}.
	\]
	Therefore both the adapted frame and its dual coframe depend
	polynomially on the affine coordinates of the big cell.  By construction,
	\(\pi_p(F)\) is spanned by \(v_1(F),\ldots,v_p(F)\). Duality therefore gives
	\[
	\pi_p(F)
	=
	\{\ell_{p+1}(F)=\cdots=\ell_n(F)=0\}.
	\]
	We shall write $L_p=\pi_p(F)$ whenever no confusion can arise.
	\begin{lemma}\label{lem:affine-projection-open}
		A coordinate projection
		$\operatorname{pr}: {\mathbb{C}}^{r+s}\to {\mathbb{C}}^r$ is a Zariski open
		map.
	\end{lemma}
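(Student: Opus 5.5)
To prove that $\operatorname{pr}:\C^{r+s}\to\C^r$ is Zariski open, it suffices to treat basic open sets. Every Zariski open subset of $\C^{r+s}$ is a finite union of principal open sets $D(f)=\{(x,y):f(x,y)\neq0\}$ with $f\in\C[x_1,\ldots,x_r,y_1,\ldots,y_s]$, and images commute with unions. So the whole statement reduces to showing that $\operatorname{pr}(D(f))$ is Zariski open in $\C^r$ for a single polynomial $f$.

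Fix $f$ and expand it in the $y$-variables:
\[
f(x,y)=\sum_{\beta}c_\beta(x)\,y^\beta,
\]
a finite sum with $c_\beta\in\C[x]$. The claim is that
\[
\operatorname{pr}(D(f))=\bigcup_\beta\{x:c_\beta(x)\neq0\},
\]
and the right-hand side is a finite union of principal opens, hence open. The two inclusions go as follows.

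\begin{itemize}
\item If $x\in\operatorname{pr}(D(f))$, there is some $y$ with $f(x,y)\neq0$. Then the polynomial $y\mapsto f(x,y)$ is not the zero polynomial, so some coefficient $c_\beta(x)$ is nonzero.
\item Conversely, suppose $c_\beta(x)\neq0$ for some $\beta$. Then $y\mapsto f(x,y)$ is a nonzero polynomial on $\C^s$. A nonzero polynomial over the infinite field $\C$ does not vanish identically; this follows from the standard induction on $s$, using that a one-variable polynomial has finitely many roots. So there is a $y$ with $f(x,y)\neq0$, and $x\in\operatorname{pr}(D(f))$.
\end{itemize}

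There is no real obstacle in this argument. The only points needing care are the reduction to principal opens, which uses that open sets are complements of finitely many hypersurfaces, and the fact that a nonzero polynomial over $\C$ has a nonvanishing point. Both are standard. For the later applications of this lemma, we record that on the big cell $\mathcal C\simeq\C^{n(n-1)/2-1}$, images of nonempty Zariski open sets under coordinate projections are nonempty Zariski open, hence dense.
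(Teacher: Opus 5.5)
Your proof is correct and follows the paper's argument: reduce to a principal open set $D(f)$, expand $f$ in the $y$-variables, and identify $\operatorname{pr}(D(f))$ as the complement of the common zero locus of the coefficients $c_\beta(x)$. You also state explicitly a fact the paper uses without comment, namely that a nonzero polynomial over $\C$ has a nonvanishing point, which is what justifies ``$f(x,\cdot)\equiv 0$ if and only if all $c_\beta(x)=0$''.
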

	
	\begin{proof}
		Write the coordinates as $(x,y)\in {\mathbb{C}}^r\times {\mathbb{C}}^s$. Since arbitrary Zariski open sets in $\mathbb C^{r+s}$ are unions of principal open sets, it is enough to consider a principal open set $D(f):=\{f\neq 0\}$ with $f$ a polynomial in $x,y$.  Expand
		$f$ as a polynomial in $y$,
		$f(x,y)=\sum_\alpha c_\alpha(x)y^\alpha$. 
		A point $x$ does not belong to $\operatorname{pr}(D(f))$ exactly when
		$f(x,y)=0$ for every $y$, or equivalently when
		$c_\alpha(x)=0$ for every $\alpha$.  Thus the complement of
		$\operatorname{pr}(D(f))$ is the common zero set of the finitely many
		coefficient polynomials $c_\alpha$.  Hence
		$\operatorname{pr}(D(f))$ is Zariski open.
	\end{proof}

	\begin{proposition}\label{prop:big-cell-grassmannian-projection}
		For every $2\le p\le n-1$, one has
		$\pi_p(\mathcal C)=\mathcal C_p$.  Moreover,
		$\pi_p|_{\mathcal C}:\mathcal C\to\mathcal C_p$ is a Zariski open
		surjection.  Consequently, if
		$\Omega\subset\mathcal C$ is nonempty and Zariski open, then
		$\pi_p(\Omega)$ is a nonempty Zariski open subset of
		$\operatorname{Gr}(p,V)$.
	\end{proposition}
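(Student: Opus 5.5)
The plan is to work in the affine coordinates of the big cell given by \eqref{eq:big cell}, together with the standard affine chart of $\mathcal C_p$, and to reduce the statement to \cref{lem:affine-projection-open}. A $p$-plane $L\in\mathcal C_p$ has a unique basis whose coefficient matrix relative to $e_1,\ldots,e_n$ has the form $\binom{I_p}{X}$, with $X$ an $(n-p)\times p$ matrix. The map $L\mapsto X$ identifies $\mathcal C_p$ with $\mathbb C^{(n-p)p}$, and it is an open chart of $\operatorname{Gr}(p,V)$.

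First, we compute $\pi_p$ on $\mathcal C$ in these coordinates. For $F\in\mathcal C$, we have $L_p=\Span\{v_1(F),\ldots,v_p(F)\}$. Write the first $p$ columns of $B(F)$ as $\binom{B_{11}}{B_{21}}$, where $B_{11}$ is the upper-left $p\times p$ block. This block is lower unitriangular, with the additional constraint $b_{21}=0$, so it is invertible. Hence $L_p\in\mathcal C_p$, and its chart coordinate is $X=B_{21}B_{11}^{-1}$, which is a polynomial in the $b_{ij}$. This shows $\pi_p(\mathcal C)\subset\mathcal C_p$.

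Second, we prove surjectivity and openness together by exhibiting a product decomposition. Let $N_p$ be the space of admissible upper-left blocks $B_{11}$; these are lower unitriangular with $b_{21}=0$, and $N_p$ is an affine space. Consider the map
\[
\Phi:\mathcal C\to \mathbb C^{(n-p)p}\times N_p\times \mathbb C^{m},\qquad F\mapsto\bigl(B_{21}B_{11}^{-1},\,B_{11},\,(b_{ij})_{j>p}\bigr).
\]
Here the last factor consists of the entries in columns $p+1,\ldots,n$; these are free lower-triangular entries and are unconstrained. The map $\Phi$ is a polynomial isomorphism, with polynomial inverse given by $B_{21}=XB_{11}$. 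Under $\Phi$, the restriction $\pi_p|_{\mathcal C}$ becomes the coordinate projection onto the first factor. Surjectivity onto $\mathcal C_p$ is then immediate: take $B_{11}=I$ and $B_{21}=X$. By \cref{lem:affine-projection-open}, the coordinate projection is Zariski open, and isomorphisms preserve Zariski open sets. So $\pi_p|_{\mathcal C}$ is a Zariski open map onto $\mathcal C_p$.

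Finally, let $\Omega\subset\mathcal C$ be nonempty and Zariski open. Then $\pi_p(\Omega)$ is a nonempty Zariski open subset of $\mathcal C_p$. Since $\mathcal C_p$ is itself Zariski open in $\operatorname{Gr}(p,V)$, the set $\pi_p(\Omega)$ is Zariski open in $\operatorname{Gr}(p,V)$.

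The only step that needs care is checking that $\Phi$ is genuinely an isomorphism of affine spaces, in particular that the constraint $b_{21}=0$ lives entirely inside the $B_{11}$ factor and does not couple with $X$. This holds because $p\ge2$, so the entry $b_{21}$ sits in the upper-left $p\times p$ block.
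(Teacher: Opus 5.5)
Your proposal is correct and is essentially the paper's proof: both write $B(F)$ in block form and use the regular change of variables $X=B_{21}B_{11}^{-1}$, with inverse $B_{21}=XB_{11}$, which turns $\pi_p|_{\mathcal C}$ into a coordinate projection, and then both invoke \cref{lem:affine-projection-open}. The only differences are cosmetic: you get surjectivity directly from the product structure (taking $B_{11}=I$, $B_{21}=X$) rather than from the paper's separate Gaussian-elimination completion, and you state explicitly that the constraint $b_{21}=0$ lies inside the $B_{11}$ factor.
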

	
	\begin{proof}
		The inclusion $\pi_p(\mathcal C)\subset\mathcal C_p$ follows directly
		from the definition of $\mathcal C$.  Conversely, let
		$L\in\mathcal C_p$.  Since $\operatorname{pr}_p|_L$ is an
		isomorphism, $L$ has a unique graph presentation over $L_p^0$.
		Choosing the corresponding row-echelon basis and completing it by
		Gaussian elimination produces a lower unitriangular matrix of the
		form described above whose first $p$ columns span $L$.  Thus $L$ can
		be completed to a flag in $\mathcal C$, proving
		$\pi_p(\mathcal C)=\mathcal C_p$.
		
		It remains to prove openness.  Write the matrix $B(F)$ in block form
		relative to
		$V=L_p^0\oplus \Span\{e_{p+1},\ldots,e_n\}$:
		
		$$
		B(F)=
		\begin{pmatrix}
			A&0\\
			C&D
		\end{pmatrix}.
		$$
		
		Here $A$ and $D$ are lower unitriangular matrices, while $C$ is
		arbitrary.  The first $p$ columns of
		$B(F)$ form the matrix $\binom{A}{C}$ for $L_p$.  Multiplying these
		columns on the right by $A^{-1}$ does not change their span, so
		$L_p$ is the graph of the linear map whose matrix is
		$X=CA^{-1}$.
		
		Since $A^{-1}$ is polynomial in the entries of $A$, the change of
		variables $(A,C,D)\mapsto(A,X,D)$, with inverse $C=XA$, is  a regular isomorphism (i.e. the entries are all polynomials)
		in both directions.  In these coordinates the big cell is an affine
		space and $\pi_p|_{\mathcal C}$ is simply the coordinate projection
		$(A,X,D)\mapsto X$ onto
		$\mathcal C_p\simeq  \mathbb{C}^{p(n-p)}$.  The assertion now follows
		from Lemma~\ref{lem:affine-projection-open}.
	\end{proof}

	Let $U\subset \mbox{Sym}(V^*)_{\leq D}$ be a finite dimensional linear subspace.
	For a flag
	$F=( L_{n-1}\supset\cdots\supset L_2)$,  let $U_n=U$.
	If $2\le p\le n-1$, write $U_p(F)=U|_{L_p}$, choose a nonzero form $\ell_{p+1}\in L_{p+1}^*$ with
	$L_p=\{ \ell_{p+1}=0\}$ inside $L_{p+1}$. For $0\le j\le D$, put
	\[
	K_p^j(F)
	:=U_{p+1}(F)\cap \ell_{p+1}^j\Sym(L_{p+1}^*).
	\]
	The spaces $K_p^j(F)$ are independent of the nonzero scalar used to normalize
	$\ell_{p+1}$.
	\begin{proposition}
		\label{prop:generic-flag-vanishing}
		Let \(V\) be an \(n\) dimensional complex vector space and let
		\(
		U\subset \Sym(V^*)_{\le D}
		\)
		be a finite dimensional linear subspace.
		Then there exists a nonempty Zariski open subset
		\(
		\Omega_{\mathrm{flag}}(U)\subset \mathcal F(V)
		\)
		such that, for every \(2\le p\le n-1, 0\le j\le D, F\in\Omega_{\mathrm{flag}}(U)\), all the numbers
		\(
		\dim U_p(F),   	
		\)
		and
		\(
		\dim K_p^j(F),
		\)
		are  constant on \(\Omega_{\mathrm{flag}}(U)\). 
	\end{proposition}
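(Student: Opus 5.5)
Work on the big cell \(\mathcal C\simeq\C^{n(n-1)/2-1}\), which is dense in \(\mathcal F(V)\); a nonempty Zariski open subset of \(\mathcal C\) is Zariski open and nonempty in \(\mathcal F(V)\). The plan is to express every quantity \(\dim U_p(F)\) and \(\dim K_p^j(F)\) as the rank of a matrix whose entries are polynomials in the affine coordinates \(b_{ij}\). Then, for each of these finitely many quantities, the locus where it attains its maximal (generic) value is a nonempty Zariski open set, namely the complement of the common zero set of the maximal nonvanishing minors. The desired set \(\Omega_{\mathrm{flag}}(U)\) will be the intersection of these finitely many nonempty open subsets of the irreducible variety \(\mathcal C\); it is again nonempty and open, and every dimension is constant on it.

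\emph{Parametrizing the restrictions.} For \(F\in\mathcal C\), use the adapted frame \(v_1(F),\dots,v_n(F)\) and coordinates \(t_1,\dots,t_p\) on \(L_p\) via \(x=\sum_{k\le p}t_kv_k(F)\). Fix a basis \(u_1,\dots,u_m\) of \(U\). Then \(u_i|_{L_p}(t)=u_i\bigl(\sum_{k\le p} t_k v_k(F)\bigr)\) is a polynomial in \(t\) of degree \(\le D\). Its coefficients in the monomial basis of \(\C[t_1,\dots,t_p]_{\le D}\) are polynomials in the \(b_{ij}\), because \(B(F)\) is polynomial in them. Hence \(\dim U_p(F)\) is the rank of an \(m\times\dim\C[t]_{\le D}\) matrix \(M_p(F)\) with polynomial entries.

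\emph{The spaces \(K_p^j\).} In the coordinates \(t_1,\dots,t_{p+1}\) on \(L_{p+1}\) given by the same frame, \(L_p=\{t_{p+1}=0\}\), so we may take \(\ell_{p+1}=t_{p+1}\). An element \(\sum c_iu_i|_{L_{p+1}}\) is divisible by \(t_{p+1}^j\) exactly when its coefficients on the monomials \(t^\alpha\) with \(\alpha_{p+1}<j\) vanish. Let \(M_{p+1}^{<j}(F)\) be the submatrix of \(M_{p+1}(F)\) formed by those columns. Then \(K_p^j(F)\) is the image of \(\ker\bigl((M_{p+1}^{<j})^T\bigr)\subset\C^m\) under the map \(c\mapsto\sum c_iu_i|_{L_{p+1}}\). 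Its dimension is therefore
\[\dim\{c: c^TM_{p+1}^{<j}=0\}-\dim\{c: c^TM_{p+1}=0\}=\rank M_{p+1}(F)-\rank M_{p+1}^{<j}(F).\]
This is a difference of two ranks of polynomial matrices. The normalization of \(\ell_{p+1}\) does not matter, as already noted.

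\emph{Conclusion.} Each rank function \(F\mapsto\rank M(F)\) is lower semicontinuous in the Zariski topology. On the irreducible affine space \(\mathcal C\), its maximum value \(r\) is attained on the nonempty open set where some \(r\times r\) minor is nonzero, and it equals \(r\) there. Intersect these open sets over all \(p\), all \(j\), and the matrices \(M_p\), \(M_{p+1}\), \(M_{p+1}^{<j}\). On the intersection every rank is constant, and so every \(\dim U_p(F)\) and \(\dim K_p^j(F)\) is constant.

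The only delicate point is that \(\dim K_p^j\) is a difference of ranks, and such a difference is not semicontinuous by itself. This causes no trouble, because both ranks are separately made constant on the intersection. One must also check that using the adapted frame rather than an arbitrary basis does not affect the dimensions: the dimensions are intrinsic, since a change of coordinates on \(L_p\) is a linear isomorphism of polynomial spaces preserving degree bounds, so the frame is only a computational device.
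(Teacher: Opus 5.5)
Your proof is correct and follows essentially the same route as the paper. Both arguments express $\dim U_p(F)$ and $\dim K_p^j(F)=\rank M_{p+1}(F)-\rank(\text{column-truncated matrix})$ as ranks of matrices whose entries are polynomial in the big-cell coordinates, and then intersect the finitely many nonempty maximal-rank loci. Your direct substitution $x=\sum_{k\le p}t_kv_k(F)$ computes the same matrix as the paper's truncation $C\,L(F)^{-1}T_p$ in the adapted monomial basis; you simply omit the paper's harmless conjugation by $L(F)$ back to the $z$-basis.
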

	\begin{proof}
		Fix a basis $e_1,\ldots,e_n$ of $V$, with dual basis
		$z_1,\ldots,z_n$ of $V^*$, and order the monomials
		\(z^\alpha\), \(|\alpha|\le D\), by graded lexicographic order. Arrange
		them in a column vector \(\mathcal Z\). Thus \(\mathcal Z\) is a canonical
		basis of the ambient space \(\Sym(V^*)_{\le D}\).
		
		Let \(\mathcal C\subset\mathcal F(V)\) be the  big cell
		determined by this basis. On \(\mathcal C\), every flag \(F\) has a
		unique unitriangular coframe
		\(\ell_1(F),\ldots,\ell_n(F)\) such that
		$$
		L_p(F)=\{\ell_{p+1}(F)=\cdots=\ell_n(F)=0\}.
		$$
		The coefficients of the \(\ell_i(F)\) in the fixed basis
		\(z_1,\ldots,z_n\) are polynomial functions of the affine coordinates
		of \(F\). As explained in the previous section its inverse also have polynomial
		entries. 
		
		For \(|\alpha|\le D\), write
		\(\ell(F)^\alpha=\ell_1(F)^{\alpha_1}\cdots\ell_n(F)^{\alpha_n}\), and
		arrange these monomials in the same order as the \(z^\alpha\) into a
		column vector \(\mathcal L(F)\). There is an invertible matrix \(L(F)\)
		such that
		
		$$
		\mathcal L(F)=L(F)\mathcal Z.
		$$
		
		By the preceding discussion, every entry of \(L(F)\) and
		\(L(F)^{-1}\) is polynomial in cooridinates of  \(\mathcal C\).
		
		We first explain how to place the varying spaces
		\(\Sym(\pi_p(F)^*)_{\le D}\) in one fixed ambient vector space. The
		adapted coframe gives
		\[
		V^*=\Span\{\ell_1(F),\ldots,\ell_p(F)\}
		\oplus\Ann(\pi_p(F)).
		\]
		Hence restriction to \(\pi_p(F)\) identifies \(\pi_p(F)^*\) with
		\(\Span\{\ell_1(F),\ldots,\ell_p(F)\}\subset V^*\), and therefore
		identifies \(\Sym(\pi_p(F)^*)_{\le D}\) with the corresponding
		subspace of \(\Sym(V^*)_{\le D}\).
		
		Now we represent the restriction map $R_p(F) :U \to  \Sym(\pi_p(F)^*)_{\le D},$ it is actually a truncation in adapted basis. 
		Let $X$ be an $N$ dimensional vector space with a fixed basis
		$v_1,\ldots,v_N$, and let
		\[
		\Lambda\subset\{1,\ldots,N\}.
		\]
		We define the truncation associated with $\Lambda$ by
		\[
		T_\Lambda:X\longrightarrow X,\qquad
		T_\Lambda\!\left(\sum_{i=1}^N a_i v_i\right)
		=
		\sum_{i\notin\Lambda}a_i v_i.
		\]
		Thus $T_\Lambda$ simply deletes the components whose indices belong to
		$\Lambda$. The matrix entries of the truncation operator are all either \(0\) or \(1\).

		Under graded lexicographic order basic $\{\ell_1^{\alpha_1} \cdots \ell_p^{\alpha_p}\}_{|\alpha| \le D}$, there exists an index $\Lambda(p)$ that depends only on $p$ such $T_{\Lambda(p)}$ simply
		deletes every monomial containing one of
		\(\ell_{p+1}(F),\ldots,\ell_n(F)\), is represented by a fixed \(0\)-\(1\) truncation
		matrix \(T_p\). 
		
		Then $R_p(F) = T_{\Lambda(p)}|_U $. Choose a basis \(u_1,\ldots,u_m\) of \(U\), and let \(C\) be the fixed
		matrix such that
		\[ \begin{pmatrix}
			u_1 \\
			\vdots\\
			u_m
		\end{pmatrix} = C \mathcal{Z} = C L(F)^{-1} \mathcal{L}(F)
		.
		\]
		Thus the corresponding matrix of $R_p(F)$ under the canonical basis $\mathcal{Z}$ is $$M_p(F) :=C L(F)^{-1} T_p L(F)$$ and $\dim U_p(F) = \rank(M_p(F)). $
		
		We next consider the vanishing filtration, notes that $K_p^0(F)= U_{p+1}(F)$ , we only need to consider the order $j \geq 1$. Inside \(L_{p+1}(F)\), the
		hyperplane \(L_p(F)\) is defined by
		\(\ell_{p+1}(F)|_{L_{p+1}(F)}\). 
		In the monomial basis
		\(\{ \ell_1(F)^{\alpha_1}\ldots\ell_{p+1}(F)^{\alpha_{p+1}} \}_{|\alpha|\le D}\),  consider the truncation map
		\[ R_{p+1,j}(F) : 
		\Sym(\pi_{p+1}(F)^*)_{\le D}
		\longrightarrow
		\Sym(\pi_{p+1}(F)^*)_{\le D}
		,
		\]
		which  simply keeps those monomials for
		whose exponent of \(\ell_{p+1}(F)\) is strictly smaller than \(j\)
		and deletes those divisible by \(\ell_{p+1}(F)^j\). It is therefore
		represented by a \(0\)-\(1\) truncation matrix that depends only on $p,j$ which we
		denote by \(T_{p,j}\). 
		The corresponding matrix of $R_{p+1,j}(F)|_{U_{p+1}(F)}$ under the canonical basis $\mathcal{Z}$ is $$N_{p,j}(F) :=C L(F)^{-1} T_{p+1}T_{p,j} L(F).$$ 
		By definition,
		\(K_p^j(F)\) is precisely the kernel of the  map $R_{p+1,j}(F)$ after
		restriction to  \(U_{p+1}(F)\). Hence rank-nullity gives
		\[
		\dim K_p^j(F)
		=
		\operatorname{rank}M_{p+1}(F)
		-
		\operatorname{rank}N_{p,j}(F).
		\]
		For a matrix with polynomial
		entries, the locus on which its rank is maximal is a nonempty Zariski
		open subset. Apply this to the finitely many matrices \(M_p(F)\) and
		\(N_{p,j}(F)\) occurring above. Since the big cell \(\mathcal C\) is
		irreducible, the intersection of all these maximal-rank loci is again
		a nonempty Zariski open subset of \(\mathcal C\). Denote it by
		\(\Omega_{\mathrm{flag}}(U)\). As \(\mathcal C\) is itself Zariski open in
		\(\mathcal F(V)\), so is \(\Omega_{\mathrm{flag}}(U)\).
		
		On \(\Omega_{\mathrm{flag}}(U)\), all the ranks of the matrices \(M_p(F)\) and
		\(N_{p,j}(F)\) are constant. Therefore all the dimensions
		\(\dim U_p(F)\) and \(\dim K_p^j(F)\) are constant there. 
	\end{proof}
	Let
	$S=\Sym(V^*)$, and let $U\subset S_{\le D}$ be a finite dimensional
	linear subspace of polynomials. For $[\ell] \in \mathbb{P}(V^*)$, define
	\[
	K^j([\ell] )
	:=U  \cap \ell^j\Sym(V^*)  .
	\]
	The notation \(K^j([\ell])\) will be used only for a single moving hyperplane, whereas \(K_p^j\) denotes the corresponding filtration along a flag.
	\begin{lemma}\label{lem:generic-kernel-filtration}
		There exists a nonempty Zariski open subset
		$\Omega(U)\subset\PP(V^*)$ such that, for every $0 \le j\le D$, the
		dimension of $K^j([\ell])$ is constant for $[\ell]\in\Omega(U)$.
		Moreover, near every point of $\Omega(U)$, one can choose a basis of
		each $K^j([\ell])$ whose coefficients depend regularly on the
		coefficients of $\ell$. 
	\end{lemma}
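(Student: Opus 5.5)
The plan is to imitate the proof of \cref{prop:generic-flag-vanishing}, now with a single moving linear form $\ell$ in place of a flag, working locally on the standard affine charts of $\PP(V^*)$.

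\emph{Reduction to a chart.} Fix the basis $z_1,\ldots,z_n$ of $V^*$ and the chart $\mathcal U_1=\{[\ell]:\ell=z_1+c_2z_2+\cdots+c_nz_n\}\cong\C^{n-1}$. On this chart define the coframe $\ell_1=\ell$ and $\ell_i=z_i$ for $i\ge2$. Its coefficient matrix is unitriangular with polynomial entries in $c$, so the inverse also has polynomial entries. As before, let $\mathcal Z$ be the monomial vector in the $z$'s of degree $\le D$, and let $\mathcal L(c)=L(c)\mathcal Z$ be the monomials in the $\ell_i$. Then $L(c)$ and $L(c)^{-1}$ are polynomial in $c$.

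\emph{Kernel description.} A polynomial lies in $\ell^j\Sym(V^*)$ exactly when, written in the basis $\mathcal L(c)$, it involves only monomials whose exponent of $\ell_1$ is at least $j$. Let $T_j$ be the fixed $0$--$1$ truncation deleting those monomials. With $C$ the coefficient matrix of a basis $u_1,\ldots,u_m$ of $U$, the space $K^j([\ell])$ is the kernel of the map $a\mapsto a\,C L(c)^{-1}T_jL(c)$ on $\C^m$. Call this matrix $N_j(c)$. It has polynomial entries, and $\dim K^j=m-\rank N_j(c)$.

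\emph{Genericity.} The maximal-rank locus of each $N_j$ is a nonempty Zariski open subset of the chart. Intersecting over the finitely many $j\le D$ gives a nonempty Zariski open set $\Omega_1$, which is dense because $\PP(V^*)$ is irreducible. Since each rank is generic, this generic value is the same on all charts: each chart's maximal-rank locus meets the others in a dense open set. So we take $\Omega(U)$ to be the union of the corresponding open sets $\Omega_i$ over the charts $\mathcal U_i$, and all dimensions are constant on it.

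\emph{Regular bases.} Near a point $c_0\in\Omega_1$, choose a nonvanishing maximal minor of $N_j(c)$, given by row set $R$ and column set $Q$ with $|R|=|Q|=r=\rank N_j$. Solve the kernel by Cramer's rule: the free variables are $a_i$ with $i\notin R$, and the remaining $a_i$ are expressed as quotients by that minor. This gives $m-r$ kernel vectors whose entries are regular functions (quotients of polynomials with nonvanishing denominator) on a neighborhood where the minor is nonzero, and they are independent because of the identity block in the free coordinates. Multiplying by $C$ turns them into coefficient vectors of a basis of $K^j([\ell])$.

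\emph{Expected obstacle.} The main point needing care is that the rank drop on the complement does not interfere. On the open set the rank is maximal, so the kernel of the $R$-rows equals the full kernel; that is why the locally chosen minor yields the entire $K^j$. Compatibility across charts also needs to be checked, but it is routine.
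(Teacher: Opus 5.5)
Your proof is correct and follows essentially the paper's route: you realize $K^j([\ell])$ as the kernel of a matrix with polynomial entries in the chart coordinates, take the maximal-rank locus, and extract a regular kernel frame from a nonvanishing minor. The only real difference is how divisibility is encoded: the paper uses $q\mapsto\pi(\ell^jq)$ into a complement $U'$ of $U$, which parametrizes by the quotient $q$ and so directly yields the frames $\ell^jq_i([\ell])$ with $q_i$ regular that \cref{cor:local-frame} needs; your truncation in the $\ell$-adapted coframe (as in \cref{prop:generic-flag-vanishing}) instead parametrizes by coefficients in $U$, and regularity of $F_i/\ell^j$ then follows from an exponent shift in that coframe. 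One small slip: for your left kernel $aN_j(c)=0$, the reduced system that cuts out the full kernel is the one indexed by the minor's columns $Q$, not its rows $R$.
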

	
	\begin{proof}
		Choose a linear complement \(U'\) of \(U\) in \(S_{\le D}\), and let
		\(\pi:S_{\le D}\to U'\) be the corresponding projection. Fix
		\(0\le j\le D\). On an affine chart of \(\PP(V^*)\), choose the standard
		representative \(\ell\) of \([\ell]\) obtained by setting one fixed nonzero coordinate equal to \(1\), and consider
		\[
		M_j(\ell):S_{\le D-j}\longrightarrow U',
		\qquad
		q\longmapsto\pi(\ell^jq).
		\]
		In fixed bases, the entries of \(M_j(\ell)\) (we also use it to denote the matrix associated to the map $M_j(\ell)$) are polynomial functions
		of the affine coordinates of \([\ell]\). 
		Since multiplication by the nonzero polynomial \(\ell^j\) is injective,
		this identifies \(\ker M_j(\ell)\) with
		\(K^j([\ell])\).
		The rank of a matrix with regular entries is maximal on a nonempty
		Zariski open subset. Hence there is a nonempty Zariski open subset
		\(\Omega_j\subset\PP(V^*)\) on which \(\rank M_j(\ell)\), and therefore
		\(\dim K^j([\ell])\), is constant. Since only finitely many values of
		\(j\) occur, the intersection
		\[
		\Omega(U):=\bigcap_{j=0}^{D}\Omega_j
		\]
		is again nonempty and Zariski open.
		
		It remains to prove the local regularity. Fix
		\([\ell_0]\in\Omega(U)\) and \(j\). Let \(r=\rank M_j(\ell_0)\).
		After choosing bases, select an \(r\times r\) minor that is nonzero at
		\(\ell_0\). Shrinking to a Zariski open neighborhood of \([\ell_0]\),
		this minor remains nonzero, and we may write
		\[
		M_j(\ell)=
		\begin{pmatrix}
			A(\ell)&B(\ell)\\
			C(\ell)&D(\ell)
		\end{pmatrix},
		\qquad
		\det A(\ell)\neq0.
		\]
		On this neighborhood the rank is constantly \(r\), so 
		elimination expresses the pivot coordinates of every element of
		\(\ker M_j(\ell)\) regularly in terms of the free coordinates.
		Equivalently, choosing the free coordinates successively to be the
		standard basis vectors gives a basis
		\(q_1(\ell),\ldots,q_{e_j}(\ell)\) of \(\ker M_j(\ell)\) whose
		coefficients are regular functions of \([\ell]\). Therefore
		\[
		F_i(\ell):=\ell^jq_i(\ell),\qquad 1\le i\le e_j,
		\]
		form a regularly varying local frame of \(K^j([\ell])\).
		Intersecting the finitely many resulting neighborhoods gives such
		frames simultaneously for all \(j\).
	\end{proof}
	\begin{definition}\label{def:general-hyperplane}
		Let
		\(U\subset \Sym(V^*)_{\le D}\) be a finite dimensional polynomial
		space. 
		A point \([\ell]\in\mathbb P(V^*)\), or equivalently the hyperplane \(\ker\ell \subset V\), is said to be \emph{general for \(U\)}
		if
		\[
		\dim K^j([\ell])=\min_{[\lambda] \in \mathbb{P}(V^*)}
		\dim K^j([\lambda]),
		\quad \text{for every }0\le j\le D.
		\]
		We say that a \(p\)-plane \(L\subset V\) is \emph{general for \(U\)} if \(\dim(U|_L)\) takes its  maximal value on \(\operatorname{Gr}(p,V)\).
	\end{definition}
	
	\begin{corollary}\label{cor:local-frame}
		Suppose that \(j\ge1\). Let
		\([\ell_0]\in\mathbb P(V^*)\) be general for \(U\), and let
		\(0\neq F_0\in K^j([\ell_0])\).
		Then one can find a Zariski open neighborhood
		\(\Omega_0\subset\mathbb P(V^*)\) of \([\ell_0]\), a regular choice
		\([\ell]\mapsto\ell\in V^*\setminus\{0\}\) of representative on
		\(\Omega_0\), and a regularly varying family
		\[
		F_{[\ell]}=\ell^j q_{[\ell]}\in U,
		\qquad [\ell]\in\Omega_0,
		\]
		such that \(F_{[\ell_0]}=F_0\).
	\end{corollary}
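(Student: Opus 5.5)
The plan is to deduce the statement from \cref{lem:generic-kernel-filtration}. The one point needing care is that "general for \(U\)" in \cref{def:general-hyperplane} means the dimensions \(\dim K^j\) are \emph{minimal}, whereas \cref{lem:generic-kernel-filtration} only says they are constant on some open set \(\Omega(U)\). So the first step is to reconcile these two notions.

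For each \(j\), the rank of \(M_j(\ell)\) is lower semicontinuous, so the locus where it attains its maximum is nonempty and Zariski open. Equivalently, the locus \(\Omega_j^{\max}\) where \(\dim K^j([\ell])=\dim\ker M_j(\ell)\) attains its minimum over \(\PP(V^*)\) is nonempty and Zariski open. We may therefore take \(\Omega(U)=\bigcap_j \Omega_j^{\max}\), which is exactly the set of points general for \(U\). The proof of \cref{lem:generic-kernel-filtration} goes through verbatim with this choice, since it only uses that the rank of \(M_j\) is constant near the point. In particular \([\ell_0]\in\Omega(U)\).

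Next I work in an affine chart of \(\PP(V^*)\) containing \([\ell_0]\), normalizing one fixed nonzero coordinate of \(\ell\) to \(1\). This gives a regular representative \([\ell]\mapsto\ell\). Rescaling \(F_0\) does not matter, since \(F_0\) is an element of \(U\) divisible by \(\ell_0^j\) and this property is invariant under the rescaling of \(\ell_0\). So we may take \(\ell_0\) to be the chart representative. By the local regularity part of \cref{lem:generic-kernel-filtration}, on a Zariski open neighborhood \(\Omega_1\) of \([\ell_0]\) there is a frame \(F_i(\ell)=\ell^jq_i(\ell)\), \(1\le i\le e_j\), of \(K^j([\ell])\) whose coefficients are regular in \(\ell\). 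Here \(q_i(\ell)\) runs over a regular basis of \(\ker M_j(\ell)\).

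Since the \(F_i(\ell_0)\) form a basis of \(K^j([\ell_0])\), we can write \(F_0=\sum_i c_iF_i(\ell_0)\) with constants \(c_i\in\C\). Define
\[
F_{[\ell]}:=\sum_i c_iF_i(\ell)=\ell^j\sum_i c_iq_i(\ell)=:\ell^jq_{[\ell]} .
\]
This family is regular in \([\ell]\), lies in \(K^j([\ell])\subset U\), and satisfies \(F_{[\ell_0]}=F_0\). The corollary then holds with \(\Omega_0:=\Omega_1\cap\Omega(U)\), which is a Zariski open neighborhood of \([\ell_0]\).

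The only real obstacle is the first step: checking that the open set produced in \cref{lem:generic-kernel-filtration} contains every general point, not merely some open set on which the dimensions are constant. This is settled by choosing \(\Omega(U)\) to be the maximal-rank locus for each \(M_j\), as above. The rest is routine.
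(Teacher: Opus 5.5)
Your proposal is correct and follows essentially the same argument as the paper: normalize the representative on an affine chart, take the regular local frame of $\ker M_j(\ell)$ from \cref{lem:generic-kernel-filtration}, write $F_0$ in that frame at $\ell_0$ with constant coefficients, and use the same coefficients at nearby $[\ell]$. Your extra first step makes explicit a point the paper's proof uses tacitly: by taking $\Omega(U)$ to be the intersection of the maximal-rank loci of the $M_j$, every point general for $U$ in fact lies in $\Omega(U)$. One small wording fix: where you write ``rescaling $F_0$ does not matter'' you mean rescaling $\ell_0$.
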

	
	\begin{proof}
		Choose \(x_0\in V\) with \(\ell_0(x_0)\neq0\). On the affine chart
		\(D(x_0)=\{[\ell]:\ell(x_0)\neq0\}\), there is a unique representative
		\(\ell\) for $[\ell]$ satisfying \(\ell(x_0)=1\). By
		\cref{lem:generic-kernel-filtration}, after shrinking to a Zariski open
		neighborhood \(\Omega_0\subset D(x_0)\cap\Omega(U)\) of \([\ell_0]\),
		there is a regular local frame
		\(q_1([\ell]),\ldots,q_r([\ell])\) of
		\(\ker M_j(\ell)\). Write
		\[
		F_0=\sum_{a=1}^r c_a\ell_0^j q_a([\ell_0])
		\]
		and define
		\[
		F_{[\ell]}:=\sum_{a=1}^r c_a\ell^j q_a([\ell]),
		\qquad [\ell]\in\Omega_0.
		\]
		Then \(F_{[\ell]}\in K^j([\ell])\) varies regularly with
		\([\ell]\) and satisfies \(F_{[\ell_0]}=F_0\).
	\end{proof}

	\section{Admissible Flags }\label{sec: admi flag}
	
	In this section, we further study flags and identify the "good" flags that are most suitable for our study of the restriction problem, which we call admissible flags. In particular, we prove that these admissible flags are general in the flag variety.

	\begin{definition}
		Let $U\subset S$ be a finite dimensional
		primitive polynomial space. A flag
		\[
		F=(V=L_n\supset L_{n-1}\supset\cdots\supset L_2)
		\]
		is called \emph{admissible} for $U$ if the following conditions hold:
		
		\begin{enumerate}[(i)]
			\item for every $2\le p\le n-1$, $L_p$ is general for $U$;
			
			\item for every $2\le p\le n-1$, the hyperplane
			$L_p\subset L_{p+1}$ is general for $U|_{L_{p+1}}$;
			\item $U|_{L_p}$ is primitive for every $2\le p\le n$.
		\end{enumerate}
	\end{definition}
	We now prove that restriction to a general plane preserves primitivity .
	\begin{lemma}
		\label{lem:generic-primitivity}
		Let $V$ be an $n$-dimensional complex vector space, let
		$U\subset\mathbb C[V]$ be a finite dimensional primitive polynomial
		space, and let $2\le p\le n$. Then there exists a nonempty Zariski
		open subset
		\[
		\Omega_p\subset\operatorname{Gr}(p,V)
		\]
		such that $U|_L$ is primitive for every $L\in \Omega_p$.
	\end{lemma}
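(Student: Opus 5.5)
The plan is to translate primitivity into a statement about the dimension of the common zero locus, and then use a dimension count for linear sections through the origin. For a finite dimensional polynomial space $U\subset\C[V]$, write $Z(U)\subset V$ for the common zero set of a basis $u_1,\dots,u_m$ of $U$. The proof rests on the equivalence
\[
U \text{ is primitive}\iff U\neq 0 \text{ and } \dim Z(U)\le \dim V-2 .
\]
(Here the empty set counts as having dimension $-\infty$, and primitivity is understood to include $U\neq0$.) Only one direction of this equivalence is needed on each side.

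\emph{Step 1: the ambient space $V$.} Suppose $U$ is primitive. Assume $Z(U)$ had an irreducible component of dimension $n-1$. It would then be a hypersurface $\{g=0\}$ with $g$ irreducible. Since every $u_i$ vanishes on $\{g=0\}$, the Nullstellensatz, together with the fact that $g$ is prime, gives $g\mid u_i$ for all $i$. This contradicts primitivity. Hence $\dim Z(U)\le n-2$.

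\emph{Step 2: the plane $L$.} Suppose $U|_L\neq 0$ has a nonconstant common factor $g$ on $L\cong\C^p$. Then $\{g=0\}\subset Z(U|_L)$ has dimension $p-1$. Note also that $Z(U|_L)=Z(U)\cap L$. So it suffices to find a nonempty Zariski open $\Omega_p\subset\operatorname{Gr}(p,V)$ on which $\dim(Z(U)\cap L)\le p-2$. This bound also forces $U|_L\neq0$, since $U|_L=0$ would give $Z(U)\cap L=L$, of dimension $p$.

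\emph{Step 3: dimension count.} The planes $L$ all pass through the origin, so the naive transversality count $\dim Z+p-n$ is not available for free. Instead I would use an incidence variety. Write $Z(U)=\bigcup_i Z_i$ as a union of irreducible components, each with $\dim Z_i\le n-2$. Set
\[
I_i=\{(x,L)\in (Z_i\setminus\{0\})\times\operatorname{Gr}(p,V): x\in L\}.
\]
The fibre of $I_i$ over $x$ is the set of $p$-planes containing the line $\C x$, which is isomorphic to $\operatorname{Gr}(p-1,n-1)$ and has dimension $(p-1)(n-p)$. Hence
\[
\dim I_i\le n-2+(p-1)(n-p).
\]
Since $\dim\operatorname{Gr}(p,V)=p(n-p)$, the fibres of the projection $I_i\to\operatorname{Gr}(p,V)$ over a nonempty Zariski open set have dimension at most
\[
n-2+(p-1)(n-p)-p(n-p)=p-2
\]
(or are empty). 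This uses Chevalley's theorem together with upper semicontinuity of fibre dimension, applied to each irreducible component of $I_i$. The origin contributes only a point, of dimension $0\le p-2$, and this is where the hypothesis $p\ge2$ is used. Intersecting the finitely many open sets obtained for the various $i$ gives $\Omega_p$. For $p=n$ the statement is trivial with $\Omega_n=\operatorname{Gr}(n,V)$.

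The main obstacle I expect is Step 3: making the fibre-dimension estimate rigorous for planes through the origin, including the bookkeeping that every component of a general fibre has dimension at most $p-2$. If the incidence argument becomes awkward, a fallback is to pass to the projectivization. There one uses the cones of the components of $Z(U)$ and applies the standard fact that a general linear subspace of codimension $n-p$ in $\PP^{n-1}$ meets a variety of dimension at most $n-3$ in dimension at most $p-3$.
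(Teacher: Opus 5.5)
Your proposal is correct and follows essentially the same route as the paper. Both arguments reduce primitivity of $U|_L$ to the absence of codimension-one components in $Z(U)\cap L$, and both bound $\dim(Z_i\cap L)$ using the incidence variety $\{(x,L): x\in Z_i\setminus\{0\},\ x\in L\}$, which has dimension at most $\dim Z_i+(p-1)(n-p)\le n-2+(p-1)(n-p)$, together with generic fibre dimension over $\operatorname{Gr}(p,V)$. You also handle $U|_L\neq 0$ explicitly, which the paper leaves implicit. Your projective fallback would need adjusting when $U$ is not homogeneous, which is the situation in the paper: the cone over a non-conic $Z_i$ can have dimension $\dim Z_i+1$, so the ``dimension at most $n-3$'' input fails. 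The main incidence argument does not depend on that fallback.
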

	
	\begin{proof}
		Let
		\[
		Z(U):=\{z\in V:f(z)=0\text{ for every }f\in U\},
		\]
		and write
		$Z(U)=X_1\cup\cdots\cup X_s$
		as a union of irreducible components. Since $U$ is primitive,
		$Z(U)$ has no component of codimension one, hence
		$\dim X_i\le n-2$ for every $i$.
		
		Fix one component $X=X_i$ and put $m=\dim X$. Consider
		\[
		\mathcal I_{X,p}
		=
		\{(x,L)\in(X\setminus\{0\})\times\operatorname{Gr}(p,V):x\in L\}.
		\]
		This is  a local Zariski closed set, hence a constructible set. For fixed $x\ne0$, the $p$-planes containing $x$ are parametrized by
		$\operatorname{Gr}(p-1,V/\mathbb Cx)$, which has dimension
		$(p-1)(n-p)$. Thus
		\[
		\dim\mathcal I_{X,p}=m+(p-1)(n-p).
		\]
		
		Let
		$q:\mathcal I_{X,p}\to\operatorname{Gr}(p,V)$
		be the second projection. If $q$ is not dominant, i.e., its image is not Zariski dense in $\operatorname{Gr}(p,V)$, then a general
		$p$-plane does not meet $X\setminus\{0\}$. In other words, $X\cap L=0$. If $q$ is dominant, i.e., its image is  Zariski dense in $\operatorname{Gr}(p,V)$, then the
		generic fiber dimension theorem gives, on a nonempty Zariski open
		subset of $\operatorname{Gr}(p,V)$,
		\[
		\dim q^{-1}(L)
		=
		m+(p-1)(n-p)-p(n-p)
		=
		m+p-n
		\le p-2.
		\]
		Since
		$q^{-1}(L)=(X\cap L)\setminus\{0\}$,
		it follows that, for general $L$, the intersection $X\cap L$ has dimension either $0$ or $\leq q-2$, i.e., no
		irreducible component of dimension $p-1$ (codimension-one) in $L$. 
		
		There are only finitely many components $X_i$, so the corresponding
		nonempty Zariski open subsets of $\operatorname{Gr}(p,V)$ may be
		intersected. For every $L$ in the resulting open set,
		\[
		Z(U|_L)=Z(U)\cap L
		\]
		has no codimension-one irreducible component in $L$. Hence $U|_L$ is
		primitive.
	\end{proof}
	\begin{corollary}\label{cor:flag-primitivity}
		Let $V$ be an $n$-dimensional complex vector space, and let
		$U\subset\mathbb C[V]$ be a finite dimensional primitive polynomial
		space.  Then there exists a nonempty Zariski open subset $\Omega \subset \mathcal{F}(V)$ such that, for every $F \in \Omega$ 
		the restriction $U|_{\pi_p(F)}$ is primitive for every
		$2\le p\le n$.
	\end{corollary}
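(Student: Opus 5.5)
The plan is to pull back, along the regular projections $\pi_p:\mathcal F(V)\to\operatorname{Gr}(p,V)$, the Zariski open sets supplied by \cref{lem:generic-primitivity}, and then intersect them. Concretely, for each $2\le p\le n-1$, \cref{lem:generic-primitivity} gives a nonempty Zariski open subset $\Omega_p\subset\operatorname{Gr}(p,V)$ such that $U|_L$ is primitive for every $L\in\Omega_p$. For $p=n$ there is nothing to do, since $\pi_n(F)=V$ for every flag and $U|_V=U$ is primitive by hypothesis.

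We then set
\[
\Omega:=\bigcap_{p=2}^{n-1}\pi_p^{-1}(\Omega_p)\subset\mathcal F(V).
\]
Each $\pi_p$ is a regular map, so each $\pi_p^{-1}(\Omega_p)$ is Zariski open in $\mathcal F(V)$. The set $\Omega$ is a finite intersection of these, hence Zariski open, and by construction every $F\in\Omega$ satisfies the required primitivity for all $2\le p\le n$.

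The only point needing care is nonemptiness. Each individual preimage is nonempty because $\pi_p$ is surjective: any $p$-plane extends to a full flag. Once each preimage is known to be nonempty, irreducibility of $\mathcal F(V)$ makes each of them dense. Finitely many nonempty open subsets of an irreducible variety have nonempty intersection, so $\Omega\neq\emptyset$.

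Alternatively, nonemptiness can be obtained inside the big cell. By \cref{prop:big-cell-grassmannian-projection}, the restriction $\pi_p|_{\mathcal C}:\mathcal C\to\mathcal C_p$ is surjective. Since $\Omega_p\cap\mathcal C_p$ is nonempty (both are dense open in the irreducible $\operatorname{Gr}(p,V)$), its preimage in $\mathcal C\simeq\C^{n(n-1)/2-1}$ is a nonempty open set. Irreducibility of affine space then forces the finite intersection of these preimages to be nonempty.
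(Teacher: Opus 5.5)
Your proposal is correct and matches the paper's proof: you pull back the open sets $\Omega_p$ from \cref{lem:generic-primitivity} along the surjective regular maps $\pi_p$, intersect them, get nonemptiness from irreducibility of $\mathcal F(V)$, and note that $p=n$ is just the hypothesis. The big-cell variant in your last paragraph is a valid alternative argument for nonemptiness and is not needed.
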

	
	\begin{proof}
		For each $2\le p\le n-1$, let
		$\Omega_p\subset\operatorname{Gr}(p,V)$ be the nonempty Zariski
		open subset furnished by ~\cref{lem:generic-primitivity}, so that
		$U|_L$ is primitive for every $L\in\Omega_p$.
		
		Let
		\[
		\pi_p:\mathcal F(V)\longrightarrow\operatorname{Gr}(p,V),
		\qquad
		F\longmapsto L_p,
		\]
		be the natural projection. Since $\pi_p$ is surjective,
		$\pi_p^{-1}(\Omega_p)$ is a nonempty Zariski open subset of
		$\mathcal F(V)$. Hence
		\[
		\Omega=\bigcap_{p=2}^{n-1}\pi_p^{-1}(\Omega_p)
		\]
		is a nonempty Zariski open subset of $\mathcal F(V)$, because
		$\mathcal F(V)$ is irreducible. On every flag in this intersection,
		all the spaces $U|_{L_p}$ are primitive simultaneously. The case
		$p=n$ is the original hypothesis on $U$.
	\end{proof}

	\begin{theorem}
		\label{thm:admissible-flag}
		Let $V$ be an $n$-dimensional complex vector space, with $n\ge3$, and let
		$U_1,  \cdots U_m\subset\operatorname{Sym}(V^*)_{\le D}$ , where $m \geq 1$ , be finite dimensional
		primitive polynomial spaces. Then there exist a nonempty Zariski open subset $\Omega \subset \mathcal{F}(V)$ such that every $F \in \Omega$  is admissible for $U_1,\cdots , U_m.$ 
	\end{theorem}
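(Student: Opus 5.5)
Since a finite intersection of nonempty Zariski open subsets of the irreducible variety $\mathcal F(V)$ is again nonempty and Zariski open, it suffices to treat each $U=U_i$ separately and to produce, for each of the three conditions (i)--(iii), a nonempty Zariski open subset of $\mathcal F(V)$ on which it holds. Condition (iii) is exactly \cref{cor:flag-primitivity}. For condition (i), fix $2\le p\le n-1$. The function $L\mapsto\dim(U|_L)$ is the rank of a matrix with polynomial entries in local coordinates of $\operatorname{Gr}(p,V)$. By the argument in \cref{prop:generic-flag-vanishing}, it attains its maximum on a nonempty Zariski open set $G_p\subset\operatorname{Gr}(p,V)$, namely the union over affine charts of the maximal-rank loci. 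Since $\pi_p$ is regular and surjective, $\pi_p^{-1}(G_p)$ is nonempty and Zariski open in $\mathcal F(V)$.

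Condition (ii) is the real content. I would work on the big cell $\mathcal C$ with the adapted coframe $\ell_1(F),\dots,\ell_n(F)$, so that $L_p\subset L_{p+1}$ is cut out by $\ell_{p+1}(F)|_{L_{p+1}}$. The difficulty is that the ambient space $U|_{L_{p+1}}$ moves with $F$, so ``general for $U|_{L_{p+1}}$'' means that $\dim K_p^j(F)$ equals the minimum over \emph{all} hyperplanes of $L_{p+1}$ for that particular $L_{p+1}$, and not merely the generic value over the flag variety. I would handle this through the fibration $\mathcal F(V)\to$ (partial flags $L_{n-1}\supset\dots\supset L_{p+1}$), whose fibre over a fixed $L_{p+1}$ records the hyperplane $L_p\in\mathbb P(L_{p+1}^*)$ together with the lower part of the flag. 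Using the block coordinates from the proof of \cref{prop:big-cell-grassmannian-projection}, the hyperplane $L_p$ inside $L_{p+1}$ is parametrized by the free coordinates $b_{p+1,1},\dots,b_{p+1,p}$, independently of the coordinates describing $L_{p+1}$.

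The key steps for (ii) are as follows. First, restrict to the open set where $\dim U_{p+1}(F)$ is maximal, that is, where condition (i) holds at level $p+1$. Second, over this set form the incidence matrix $N_{p,j}(F)$ from \cref{prop:generic-flag-vanishing}, whose entries are polynomial in all big-cell coordinates. Third, note that for a fixed $L_{p+1}$, the set of hyperplanes in $L_{p+1}$ that are general for $U|_{L_{p+1}}$ is the nonempty open set $\Omega(U|_{L_{p+1}})$ of \cref{lem:generic-kernel-filtration}. This set is exactly where $\operatorname{rank}N_{p,j}$ attains its fibrewise maximum $r_j(L_{p+1})$. Fourth, show that the set of $F$ with $\operatorname{rank}N_{p,j}(F)=r_j(\pi_{p+1}(F))$ contains a nonempty Zariski open set. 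To do this, let $r_j^*$ be the global maximum of $\operatorname{rank}N_{p,j}$ on $\mathcal C$. The locus where $\operatorname{rank}N_{p,j}=r_j^*$ is nonempty and open, and it maps to a nonempty open set of partial flags $L_{p+1}$ by the openness argument of \cref{lem:affine-projection-open} and \cref{prop:big-cell-grassmannian-projection}. For any such $L_{p+1}$, the fibrewise maximum satisfies $r_j(L_{p+1})=r_j^*$, since it is at most $r_j^*$ and is attained. Hence, on the global maximal-rank locus, every $F$ has $L_p$ fibrewise general. The dimension of $K_p^j$ is $\dim U_{p+1}-\operatorname{rank}N_{p,j}$, and $\dim U_{p+1}$ is constant there. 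Maximizing the rank therefore minimizes $\dim K_p^j$ within the fibre, which is exactly genericity in the sense of \cref{def:general-hyperplane}.

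Intersecting the finitely many open sets over all $p$, all $j\le D$, all three conditions and all $U_i$ gives the required $\Omega$. The main obstacle I expect is this fibrewise-versus-global genericity comparison in step four. The point to check carefully is that the fibre over a fixed $L_{p+1}$ meets the global maximal-rank locus whenever $L_{p+1}$ lies in its image. The identification of $\mathbb P(L_{p+1}^*)$ minus a divisor with the affine coordinates of $L_p$ inside the big cell should give this, because the big-cell fibre is Zariski dense in $\mathbb P(L_{p+1}^*)$.
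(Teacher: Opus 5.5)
Your proposal is correct and follows essentially the same route as the paper. You reduce to a single $U$, take (iii) from \cref{cor:flag-primitivity}, and obtain (i) and (ii) from the maximal-rank loci of the matrices $M_p(F)$ and $N_{p,j}(F)$ of \cref{prop:generic-flag-vanishing}; in particular, on the global maximal-rank locus of $N_{p,j}$ the hyperplane $L_p$ is automatically optimal among the hyperplanes of $L_{p+1}$. Your step four, where the global maximum bounds the fibrewise one and big-cell hyperplanes are dense in $\PP(L_{p+1}^*)$ so that the fibrewise optimum is the minimum in \cref{def:general-hyperplane}, is a cleaner version of the paper's terse argument via $\pi_p(\Omega_1(U))\cap\operatorname{Gr}(p,M)$.
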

	\begin{proof}
		It is enough to consider one polynomial space \(U\). Indeed, once the
		assertion is proved for each \(U_\alpha\), the corresponding nonempty
		Zariski open subsets of \(\mathcal F(V)\) have nonempty intersection,
		since \(\mathcal F(V)\) is irreducible.
		
		Fix therefore a finite-dimensional primitive polynomial space
		\(U\subset\Sym(V^*)_{\le D}\). Let
		\(\Omega_1(U)=\Omega_{\mathrm{flag}}(U)\subset\mathcal F(V)\) be the nonempty Zariski open
		subset furnished by \cref{prop:generic-flag-vanishing}, chosen as in
		its proof by requiring all the matrices \(M_p(F)\) and
		\(N_{p,j}(F)\) to have maximal rank.
		
		For \(F\in\Omega_1(U)\), the numbers
		\(\dim(U|_{L_p})\) take their general values. Indeed,
		\(\pi_p(\Omega_1(U))\) is a nonempty Zariski open subset of
		\(\mathrm{Gr}(p,V)\) by \cref{prop:big-cell-grassmannian-projection}, and
		\(\dim(U|_{L_p})\) is maximal there.
		
		Moreover, the same maximal-rank construction gives successive
		generality. For every \(2\le p\le n-1\), writing
		\(M=L_{p+1}\) and \(L_p=\{\ell_{p+1}=0\}\subset M\), the matrices
		\(N_{p,j}(F)\), \(1\le j\le D\), attain their generic maximal ranks
		as the hyprplane in \(M\) varies. $\mathrm{Gr}(p,M)$ is a subvariety of $\mathrm{Gr}(p,V)$	and $L_p \in \pi_p(\Omega_1(U)) \cap \mathrm{Gr}(p,M) $ , thus \(\pi_p(\Omega_1(U)) \cap \mathrm{Gr}(p,M)\) is a nonempty Zariski open subset of
		\(\mathrm{Gr}(p,M)\) by \cref{prop:big-cell-grassmannian-projection}. Hence
		\[
		\dim\bigl(U|_M\cap\ell_{p+1}^j\Sym(M^*)\bigr)
		\]
		takes its minimal value for every \(j\). Thus
		\(L_p\subset L_{p+1}\) is general for \(U|_{L_{p+1}}\).
		
		By \cref{cor:flag-primitivity}, there is also a nonempty Zariski open
		subset \(\Omega_2(U)\subset\mathcal F(V)\) such that
		\(U|_{L_p}\) is primitive for every \(2\le p\le n\).
		Therefore
		\[
		\Omega(U):=\Omega_1(U)\cap\Omega_2(U)
		\]
		is a nonempty Zariski open subset of \(\mathcal F(V)\), and every
		flag in \(\Omega(U)\) is admissible for \(U\).
	\end{proof}

	\section{Restriction Dynamics along an Admissible Flag}  
	\label{sec:restriction-dynamics}
	In this section, we study the restriction dynamics along an admissible flag.
	
	Let \(U\subset S_{\le D}\) be a finite dimensional primitive polynomial
	space. Fix a flag
	\[
	F=(\C^n=L_n\supset L_{n-1}\supset\cdots\supset L_2)
	\in\mathcal F(\C^n)
	\]
	that is admissible for \(U\), whose existence follows from
	\cref{thm:admissible-flag}. For \(2\le p\le n\), write
	\(U_p:=U|_{L_p}\). For \(2\le p\le n-1\), choose a nonzero
	\(\ell_{p+1}\in L_{p+1}^*\) such that
	\(L_p=\{\ell_{p+1}=0\}\subset L_{p+1}\). We suppress the fixed flag
	from the notation and write
	\[
	K_p^j:=K_p^j(F)
	=U_{p+1}\cap \ell_{p+1}^j\Sym(L_{p+1}^*),
	\qquad 0\le j\le D,
	\]
	with \(K_p^{D+1}=0\). Since the flag is admissible, each
	\(L_p\subset L_{p+1}\) is a general hyperplane for \(U_{p+1}\).

	For \(1\le j\le D\), division by \(\ell_{p+1}^j\), followed by
	restriction to \(L_p\), embeds \(K_p^j/K_p^{j+1}\) into
	\(\Sym(L_p^*)\). We denote its image by
	\[
	Q_p^j:=\bigl\{(f/\ell_{p+1}^j)|_{L_p}:f\in K_p^j\bigr\}.
	\]
	This is well defined because the image vanishes precisely when
	\(f/\ell_{p+1}^j\) is divisible by \(\ell_{p+1}\). We call \(Q_p^j\) the
	\(j\)-th residual space. 
	
	\begin{theorem}
		\label{thm:moving-residual-space}
		With the notation above, for every \(2\le p\le n-1\) and every \(1\le j\le D\), one has
		\[
		\Sym^j(L_p^*)Q_p^j\subset U_p.
		\]	    
	\end{theorem}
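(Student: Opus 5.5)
The plan is a "moving hyperplane" differentiation argument, carried out inside the ambient space $L_{p+1}$. Fix $p$ and $j\ge1$, and let $0\neq f\in K_p^j$. Write $f=\ell_{p+1}^j q$ with $q\in\Sym(L_{p+1}^*)$, so that $q|_{L_p}$ is the corresponding element of $Q_p^j$. Every element of $Q_p^j$ has this form, and $U_p$ is a linear space. It therefore suffices to show $m^j\,q|_{L_p}\in U_p$ for every linear form $m\in L_p^*$, because the $j$-th powers of linear forms span $\Sym^j(L_p^*)$ (polarization).

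First, I will make $f$ part of a regular family. Since the flag is admissible, the hyperplane $[\ell_{p+1}]\in\PP(L_{p+1}^*)$ is general for $U_{p+1}$, so \cref{cor:local-frame} applies with $V$ replaced by $L_{p+1}$ and $U$ replaced by $U_{p+1}$. Following its proof, I choose $x_0\in L_{p+1}\setminus L_p$ and normalize representatives by $\ell(x_0)=1$; we may assume $\ell_{p+1}(x_0)=1$. This gives a Zariski open neighborhood $\Omega_0$ of $[\ell_{p+1}]$ and a regular family $F_{[\ell]}=\ell^jq_{[\ell]}\in U_{p+1}$ with $F_{[\ell_{p+1}]}=f$.

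Next, take any $m\in L_{p+1}^*$ with $m(x_0)=0$ and consider the line $\ell_t=\ell_{p+1}+tm$. Each $\ell_t$ is already normalized, and $[\ell_t]\in\Omega_0$ for $t$ near $0$. Then $t\mapsto F_t:=\ell_t^jq_t$ is a polynomial-coefficient curve in the finite-dimensional space $U_{p+1}$, where $q_t:=q_{[\ell_t]}$ is regular in $t$ with $q_0=q$. Hence all $t$-derivatives at $t=0$ lie in $U_{p+1}$, and their restrictions to $L_p$ lie in $U_p$. I then compute $\partial_t^j F_t|_{t=0}$ by the Leibniz rule. Any term in which fewer than $j$ derivatives hit $\ell_t^j$ keeps a factor $\ell_{p+1}$, which vanishes on $L_p$. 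The only surviving term is $j!\,m^j q_0$. Thus $j!\,(m|_{L_p})^j\,q|_{L_p}\in U_p$.

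Finally, because $x_0\notin L_p$, restriction gives an isomorphism $\{m\in L_{p+1}^*: m(x_0)=0\}\to L_p^*$. So $m|_{L_p}$ runs over all of $L_p^*$, which finishes the argument. The main point requiring care is the first step: the regularity of the family $q_{[\ell]}$ and the compatibility of the normalization with the chosen direction $m$. Both come directly from \cref{lem:generic-kernel-filtration} and \cref{cor:local-frame}, and this is exactly where condition (ii) of admissibility (generality of $L_p$ in $L_{p+1}$) is used.
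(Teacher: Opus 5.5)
Your argument is correct and is essentially the paper's proof: both use \cref{cor:local-frame} (via admissibility condition (ii)) to extend an element of $K_p^j$ to a regular family $\ell^j q_{[\ell]}\in U_{p+1}$ normalized by $\ell(x_0)=1$. Both then differentiate $j$ times along directions in $\{m: m(x_0)=0\}$ and use Leibniz to see that only $j!\,(\text{directions})\cdot q$ survives modulo $\ell_{p+1}$. The difference is cosmetic: the paper takes a mixed derivative $\partial^j/\partial t_1\cdots\partial t_j$ along $j$ independent directions to get the spanning products $\bar b_1\cdots\bar b_j$ directly, whereas you use a single direction and then polarization. One small wording point: the curve $F_t$ has coefficients that are regular in $t$ (rational and holomorphic near $t=0$), not polynomial, but this changes nothing in the argument.
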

	
	\begin{proof}
		Let \(\ell_{p+1}\in L_{p+1}^*\) define \(L_p\subset L_{p+1}\). If
		\(Q_p^j=0\), there is nothing to prove, so assume otherwise.
		
		Choose \(x_0\in L_{p+1}\) with \(\ell_{p+1}(x_0)=1\), and set
		\[
		W:=\{\alpha\in L_{p+1}^*:\alpha(x_0)=0\}.
		\]
		Then \(L_{p+1}^*=\mathbb C\ell_{p+1}\oplus W\), and restriction to
		\(L_p\) identifies \(W\) with \(L_p^*\). On the affine chart
		\(\{\ell(x_0)\ne0\}\subset\mathbb P(L_{p+1}^*)\), normalize each
		linear form by \(\ell(x_0)=1\). Thus every form $\ell$ near \(\ell_{p+1}\) is
		uniquely of the form \(\ell_{p+1}+c\), with \(c\in W\). 
		
		Since \(\ell_{p+1}\) is a general defining form for $U_{p+1}$, after shrinking the affine chart to a parameter
		Zariski open	neighborhood in $L_{p+1}^*$, by \cref{cor:local-frame} one can choose regular sections
		\[
		F_a([\ell])=\ell^j q_a([\ell])\in U_{p+1},
		\qquad 1\le a\le r,
		\]
		whose classes form a frame of
		\(K^j([\ell])/K^{j+1}([\ell])\). The \(q_a([\ell])\) may also be chosen
		to depend regularly on \([\ell]\). At \(\ell=\ell_{p+1}\), put
		\[
		\bar q_a:=q_a([\ell_{p+1}])|_{L_p}.
		\]
		By the definition of the residual space, the polynomials
		\(\bar q_1,\ldots,\bar q_r\) form a basis of \(Q_p^j\). It is
		therefore enough to prove
		\[
		\Sym^j(L_p^*)\bar q_a\subset U_p
		\]
		for each \(a\).
		After shrinking
		once more, there is a Zariski open neighborhood \(\mathcal V\subset W\)
		of \(0\) on which all the sections above are defined.
		
		Fix \(a\), and set
		\[
		v_a(c):=q_a([\ell_{p+1}+c]),\qquad
		G_a(c):=(\ell_{p+1}+c)^jv_a(c).
		\]
		Then \(G_a:\mathcal V\to U_{p+1}\) is regular. Choose arbitrary
		\(b_1,\ldots,b_j\in W\), put
		\(\beta(t)=t_1b_1+\cdots+t_jb_j\), and consider
		\[
		\mathcal G_a(t)
		=
		\bigl(\ell_{p+1}+t_1b_1+\cdots+t_jb_j\bigr)^j
		v_a(\beta(t))=\bigl(\ell_{p+1}+\beta(t)\bigr)^j
		v_a(\beta(t)).
		\]
		For \(t\) near \(0\), this polynomial belongs to the fixed linear
		space \(U_{p+1}\). Hence its mixed derivative
		\[
		D\mathcal G_a(0)
		:=
		\left.
		\frac{\partial^j\mathcal G_a}
		{\partial t_1\cdots\partial t_j}
		\right|_{t=0}
		\]
		also belongs to \(U_{p+1}\).
		
		Apply the multivariable Leibniz rule. Every term in which at least
		one derivative falls on \(v_a(\beta(t))\) has at most \(j-1\)
		derivatives falling on the factor
		\(\bigl(\ell_{p+1}+t_1b_1+\cdots+t_jb_j\bigr)^j\). After evaluation at
		\(t=0\), such a term is still divisible by \(\ell_{p+1}\), and therefore
		vanishes upon restriction to \(L_p\). The only surviving term is the
		one in which all \(j\) derivatives fall on the \(j\)-th power.
		Consequently,
		\[
		D\mathcal G_a(0)
		\equiv
		j!\,b_1\cdots b_j\,q_a(\ell_{p+1})
		\pmod{\ell_{p+1}}.
		\]
		Since \(D\mathcal G_a(0)\in U_{p+1}\), restriction to \(L_p\) gives
		\(\bar b_1\cdots\bar b_j\,\bar q_a\in U_p\), where
		\(\bar b_i=b_i|_{L_p}\in L_p^*\).
		
		The restriction map \(W\to L_p^*\) is an isomorphism, so the
		\(\bar b_i\) are arbitrary elements of \(L_p^*\). Their products span
		\(\Sym^j(L_p^*)\). Hence \(\Sym^j(L_p^*)\bar q_a\subset U_p\) for every
		\(a\), and therefore \(\Sym^j(L_p^*)Q_p^j\subset U_p\).
	\end{proof}
	\begin{definition}
		The vanishing order $\nu_p$ of $U$ at the step
		$U_{p+1}\to U_p$ is the largest integer $j \geq 0$ such that
		$K_p^{j}\neq0$.
	\end{definition}
	Hence
	$K_p^{\nu_p+1}=0$. If $\nu_p\ge1$, then the top residual space
	$Q_p^{\nu_p} \cong K_p^{\nu_p}/K_p^{\nu_p+1}$
	is nonzero.
	\begin{proposition}\label{prop:persistence-nonsimplicity}
		If \(K_p^j\ne0\) for some
		\(3\le p\le n-1\), then \(K_{p-1}^j\ne0\). Consequently, the vanishing orders along a
		admissable flag are nonincreasing:
		\[
		\nu_{p-1}\ge \nu_p .
		\]
	\end{proposition}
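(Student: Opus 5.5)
The plan is to derive the statement directly from \cref{thm:moving-residual-space}, applied at the step $U_{p+1}\to U_p$ with $j$ replaced by the top vanishing order. The cases $j=0$ and $j\ge1$ are handled separately.

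\emph{Case $j=0$.} Here $K_p^0=U_{p+1}$ and $K_{p-1}^0=U_p$. Suppose $U_{p+1}\neq0$. Then $U\ne0$, and restricting a nonzero polynomial space to a general $p$-plane gives a nonzero space. By admissibility (condition (i)), $\dim U_p$ is maximal, hence positive, so $U_p\neq 0$.

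\emph{Case $j\ge1$.}
\begin{enumerate}[(1)]
\item Since $K_p^j\neq0$, the vanishing order satisfies $\nu:=\nu_p\ge j\ge 1$.
\item By definition, $K_p^{\nu+1}=0$. Hence $Q_p^{\nu}\cong K_p^{\nu}/K_p^{\nu+1}=K_p^{\nu}\neq 0$. Choose a nonzero element $\bar q\in Q_p^{\nu}$; concretely, $\bar q=(f/\ell_{p+1}^{\nu})|_{L_p}$ for some $0\ne f\in K_p^\nu$.
\item Since $p\ge3$, $L_p$ has dimension at least $3$, so the next flag step $L_{p-1}\subset L_p$ is defined, with $\dim L_{p-1}\ge 2$. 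Let $\ell_p\in L_p^*$ be a nonzero form with $L_{p-1}=\{\ell_p=0\}$. Then $\ell_p^{\nu}\in\Sym^{\nu}(L_p^*)$.
\item \cref{thm:moving-residual-space} with $j=\nu$ gives $\ell_p^{\nu}\bar q\in U_p$.
\item The element $\ell_p^{\nu}\bar q$ is nonzero, since $\Sym(L_p^*)$ is a domain. It lies in $\ell_p^{\nu}\Sym(L_p^*)\subset \ell_p^{j}\Sym(L_p^*)$ because $\nu\ge j$. Therefore it is a nonzero element of $K_{p-1}^j=U_p\cap\ell_p^j\Sym(L_p^*)$.
\end{enumerate}

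\emph{Monotonicity.} If $\nu_p=0$, then $\nu_{p-1}\ge0=\nu_p$ trivially. Otherwise, apply the first part with $j=\nu_p$. Since $K_p^{\nu_p}\ne0$, we get $K_{p-1}^{\nu_p}\ne0$, and so $\nu_{p-1}\ge\nu_p$ by the definition of vanishing order.

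\emph{Main point to check.} The only delicate step is that \cref{thm:moving-residual-space} is genuinely available here. Its hypotheses are that $L_p\subset L_{p+1}$ is a general hyperplane for $U_{p+1}$ and that $K_p^\nu\ne0$. The first is condition (ii) of admissibility, and the second is the choice of $\nu$. The multiplier is the specific form $\ell_p$ cutting out the next member $L_{p-1}$ of the flag, and this form lies in $L_p^*$, which is all the theorem requires.
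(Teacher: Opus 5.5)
Your proof is correct and matches the paper's argument: apply the moving-residual-space theorem at the top vanishing order $\nu_p$ to a nonzero residual $\bar q\in Q_p^{\nu_p}$, then multiply by a $\nu_p$-th power of a linear form on $L_p$ to produce a nonzero element of $K_{p-1}^{\nu_p}\subset K_{p-1}^{j}$. Two differences are minor tidyings of the paper's version. You plug in the specific form $\ell_p$ cutting out $L_{p-1}$ directly, whereas the paper appeals to genericity of the next hyperplane; this is unnecessary, since the inclusion holds for all of $\Sym^{\nu_p}(L_p^*)$. You also treat the case $j=0$ separately, which the paper passes over.
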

	\begin{proof}
		Let \(\nu_p\) be the vanishing order at the step
		\(L_p\subset L_{p+1}\), and suppose \(K_p^j\ne0\). Then
		\(j\le \nu_p\), and by definition
		\(Q_p^{\nu_p}\ne0\). Choose a nonzero element
		\(q\in Q_p^{\nu_p}\). By
		\cref{thm:moving-residual-space}, we have
		\[
		q\Sym^{\nu_p}(L_p^*)\subset U_p .
		\]
		For every \(u\in L_p^*\), it follows that
		\(u^{\nu_p}q\in U_p\cap
		u^{\nu_p}\Sym(L_p^*)\). Since the next member of the admissible flag
		is general for \(U_p\), its defining linear form can be chosen as such
		a general \(u\), and hence the restriction to this hyperplane contains
		an element vanishing to order at least \(\nu_p\). Therefore
		\(K_{p-1}^{\nu_p}\ne0\), which implies
		\[
		\nu_{p-1}\ge \nu_p\ge j .
		\]
		Consequently \(K_{p-1}^{j}\ne0\).
	\end{proof}

	\begin{lemma}\label{lem:pullback}
		Let \(U\subset S=\C[z_1,\ldots,z_n]\) be a finite dimensional
		primitive polynomial space, let \(2\le p\le n\), and let \(c\) be the
		restriction dimension of \(U\) on a general \(p\)-plane.
		\begin{enumerate}[(i)]
			\item If \(c<p\), then \(\dim U=c\).
			\item If \(c=p\), then either \(\dim U=p\), or, for a general
			\(p\)-plane \(L_p\), there is a nonzero polynomial
			\(a=a_{L_p}\in\Sym(L_p^*)\) such that
			\[
			U|_{L_p}=aL_p^*.
			\]
			\item If the second alternative in \emph{(ii)} occurs, then
			\(a\) is constant and \(U\subset S_1\).
		\end{enumerate}
	\end{lemma}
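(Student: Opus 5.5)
The plan is to run the restriction dynamics of \cref{sec:restriction-dynamics} along a flag $F$ that is admissible for $U$; such flags exist by \cref{thm:admissible-flag}. Write $U_q=U|_{L_q}$, so that $\dim U_p=c$. The basic observation is that if some restriction step $U_{q+1}\to U_q$ with $q\ge p$ has a nonzero kernel, then $K_q^1\neq0$, hence $\nu_q\ge1$. Applying \cref{prop:persistence-nonsimplicity} repeatedly gives $\nu_p\ge\nu_q\ge1$, so the top residual space $Q_p^{\nu_p}$ is nonzero. Then \cref{thm:moving-residual-space} gives
\[
\Sym^{\nu_p}(L_p^*)\,Q_p^{\nu_p}\subset U_p .
\]
Multiplication by a fixed nonzero $\bar q\in Q_p^{\nu_p}$ is injective on polynomials. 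Therefore $c=\dim U_p\ge\dim\Sym^{\nu_p}(L_p^*)=\binom{p+\nu_p-1}{\nu_p}\ge p$, and equality forces $\nu_p=1$.

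For (i), if $c<p$ the displayed inequality is impossible, so every step $U_{q+1}\to U_q$ with $p\le q\le n-1$ is injective. Hence $\dim U=\dim U_n=\dim U_p=c$. For (ii), suppose $c=p$ and $\dim U>p$. Then some step is non-injective, and the argument above yields $\nu_p=1$ and $\dim(L_p^*\bar q)=p=\dim U_p$. Hence $U_p=\bar q\,L_p^*$, with $a=\bar q$. This holds for every flag in the Zariski open set of admissible flags. Since $\pi_p$ of that set is a nonempty Zariski open subset of $\operatorname{Gr}(p,V)$ by \cref{prop:big-cell-grassmannian-projection}, the statement holds for a general $p$-plane. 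One small point needs checking: the persistence proposition is stated for step indices $\ge3$, which covers all steps above $p$ as long as $p\ge2$.

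For (iii), admissibility gives that $U_p$ is primitive. A nonconstant $a$ would be a common factor of $U_p=aL_p^*$, so $a$ is a nonzero constant and $U|_L=L^*$ for all $L$ in a nonempty Zariski open set $\Omega\subset\operatorname{Gr}(p,V)$. To lift this to $U$, fix $f\in U$ and decompose it into homogeneous components $f=\sum_d f_d$. Restriction commutes with taking homogeneous components, so $(f|_L)_d=f_d|_L$. For $L\in\Omega$ we have $f|_L\in L^*$, so $f_d|_L=0$ for all $d\neq1$. The union of the planes in $\Omega$ is dense in $V$, because each point lies in a $p$-plane and $\Omega$ is dense in the Grassmannian. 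Hence $f_d\equiv0$ for $d\ne1$, and so $U\subset S_1$.

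The step I expect to need the most care is the passage from a non-injective step high in the flag down to a nonzero residual space at the bottom step $L_{p+1}\to L_p$. It relies on the persistence proposition and its use of the genericity clause (ii) of admissibility. I would double-check that the indexing $K_q^j$ versus $\nu_q$ matches at the endpoints $q=p$ and $q=n-1$. Everything else reduces to the dimension count $\binom{p+\nu-1}{\nu}\ge p$ and the density argument above.
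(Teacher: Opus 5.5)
Your proof is correct, and it differs from the paper's only in how the residual block is located.

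\textbf{The paper's route.} The paper lets $m>p$ be the smallest index with $c_m>c$. Then $U|_{L_m}\to U|_{L_{m-1}}$ is the lowest non-injective step, and by minimality $\dim U|_{L_{m-1}}=c$. Applying \cref{thm:moving-residual-space} at that step gives $p\le m-1\le\dim\Sym^j(L_{m-1}^*)\le c$. In the equality case of (ii), this chain forces $m=p+1$ and $j=1$.

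\textbf{Your route.} You take any non-injective step and transport it down to the bottom step with \cref{prop:persistence-nonsimplicity}. This gives the block $\Sym^{\nu_p}(L_p^*)Q_p^{\nu_p}\subset U_p$ directly at level $p$.

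\textbf{What each buys.} The paper's argument needs only the moving-residual theorem and the monotonicity $c_p\le c_{p+1}\le\cdots\le c_n$. Yours uses one extra, already proved, proposition, which itself rests on the moving-residual theorem and the genericity of each intermediate hyperplane. In exchange, your equality case in (ii) is immediate, because the block already lives in $U_p$.

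\textbf{Your indexing worry.} It is unfounded. Persistence is applied only at indices $p+1,\dots,q$, and these are all at least $3$ because $p\ge2$. Nothing is needed when $q=p$.

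\textbf{Minor points.} The paper disposes of $p=n$ first, where everything is trivial. This also avoids the fact that flag varieties, and hence admissible flags, are only set up for $n\ge3$, so the case $n=p=2$ should be handled separately in your write-up too. Your part (iii) matches the paper's homogeneous-component argument.
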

	\begin{proof}
		If \(p=n\), then \(c=\dim U\), so all conclusions are immediate.
		Assume henceforth that \(p<n\); in particular, \(n\ge3\).
		For \(p\le m\le n\), let \(c_m\) be the restriction dimension of
		\(U\) on a general \(m\)-plane. Apply
		\cref{thm:admissible-flag}, and choose a flag admissible for
		\(U\):
		\[
		L_p\subset L_{p+1}\subset\cdots\subset L_n=\C^n.
		\]
		For this flag, all these dimensions take their general values, and each
		\(L_{m-1}\subset L_m\) is general for \(U|_{L_m}\). Then
		\[
		c=c_p\le c_{p+1}\le\cdots\le c_n=\dim U.
		\]
		
		Suppose \(\dim U>c\), and let \(m>p\) be minimal with
		\(c_m>c\). Then \(c_{m-1}=c<c_m\), so the restriction from
		\(U|_{L_m}\) to \(U|_{L_{m-1}}\) has a nonzero kernel. Its vanishing
		order is some \(j\ge1\). By \cref{thm:moving-residual-space}, its residual
		space contains a nonzero \(q\in\Sym(L_{m-1}^*)\) such that
		\[
		q\Sym^j(L_{m-1}^*)\subset U|_{L_{m-1}}.
		\]
		Since multiplication by the nonzero polynomial \(q\) is injective,
		\[p \le m-1 \le\dim\Sym^j(L_{m-1}^*) \le c_{m-1}= c. \]
		This contradicts with $c<p$, thus $\dim U\leq c$. Since
		always \(c\le\dim U\), this proves \emph{(i)}.
		
		Now let \(c=p\) and suppose that \(\dim U>p\).  By the same argument, all the inequalities
		above are then equalities. Hence \(m=p+1\) and \(j=1\).
		The inclusion \(qL_p^*\subset U|_{L_p}\) is therefore an equality.
		The admissible flags form a nonempty open subset of the flag variety,
		whose image under the projection to \(\operatorname{Gr}(p,V)\) contains
		a nonempty open subset by
		\cref{prop:big-cell-grassmannian-projection}. Thus the preceding argument
		applies to a general \(p\)-plane, proving \emph{(ii)}.
		
		Finally, admissibility gives that \(U|_{L_p}\) is primitive.
		Thus \(U|_{L_p}=aL_p^*\) forces \(a\) to be constant. For any fixed
		nonzero homogeneous component of a polynomial in \(U\), the locus of
		\(p\)-planes on which its restriction is nonzero is open and dense.
		Intersecting this locus with the open set furnished by \emph{(ii)} shows
		that no element of \(U\) can have a nonzero homogeneous component of
		degree different from one. Hence \(U\subset S_1\), proving \emph{(iii)}.
	\end{proof}
	\begin{definition}
		For an admissible flag \(F\) for \(U\), the restriction
		\(U_{p+1}\to U_p\) is called \emph{simple} if \(K_p^2=0\), and
		\emph{nonsimple} if \(K_p^2\ne0\).
	\end{definition}
	If the restriction
	\(U_{p+1}\to U_p\) is simple, we abbreviate \(Q_p:=Q_p^1\). This
	space is zero exactly when the restriction is injective.
	
	We now return to the hypotheses of \cref{thm:main}. After the common
	factor reduction in Section~2, both \(H\) and \(V_A\) are primitive.
	Apply \cref{thm:admissible-flag} with \(U_1=H\) and \(U_2=V_A\), and fix a flag
	\begin{equation}\label{eq:weak-sos-admissible-flag}
		\C^n=L_n\supset L_{n-1}\supset\cdots\supset L_2
	\end{equation}
	that is admissible for both \(H\) and \(V_A\). Thus every successive
	hyperplane is general for the current restriction of \(H\), while, for
	both \(H\) and \(V_A\), all restriction dimensions take their general
	values and all restrictions remain primitive. Write
	\[
	H_p:=H|_{L_p},\qquad V_{A,p}:=V_A|_{L_p},\qquad
	\rho_p:=\dim H_p,
	\]
	and, for \(2\le p\le n-1\), define the restriction loss
	\[
	\delta_p:=\rho_{p+1}-\rho_p.
	\]
	Thus \(\delta_p\) is the loss in the restriction
	\(H_{p+1}\to H_p\). For \(U=H\), the notation
	\(K_p^j,Q_p^j\), and \(Q_p=Q_p^1\) will always refer to this
	fixed flag.
	
	The next lemma describes what happens when two consecutive restriction
	losses are equal.
	
	\begin{lemma}\label{lem:endpoint-equal-loss}
		Suppose that the restriction \(H_{p+1}\to H_p\) is simple, where
		\(3\le p\le n-1\), and that
		\(\delta_p=\delta_{p-1}=c\). Then
		\[
		\dim V_{A,p}\le c.
		\]
		If \(c \le p\),  then
		either \(\rank A  \le c \) or \(V_A\subset S_1\). 
	\end{lemma}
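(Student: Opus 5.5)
The plan is to show that the residual space $Q_p=Q_p^1$ of the step $H_{p+1}\to H_p$ controls every later kernel, and then to trap the restrictions $A(\cdot,\xi)|_{L_p}$ inside $Q_p$ using the standard elements $L(\cdot,\xi)A(\cdot,\xi)\in H$.

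\emph{Step 1: $\dim Q_p=c$.} Since the step $H_{p+1}\to H_p$ is simple, $K_p^2=0$. Hence $K_p^1\cong K_p^1/K_p^2\cong Q_p$. Now $K_p^1$ is the kernel of the restriction $H_{p+1}\to H_p$, so $\dim Q_p=\delta_p=c$. By \cref{thm:moving-residual-space} with $j=1$, we have $L_p^*Q_p\subset H_p$.

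\emph{Step 2: $\ell Q_p=K^1([\ell])$ for general $\ell$.} For every nonzero $\ell\in L_p^*$, the space $\ell Q_p$ lies in $H_p\cap \ell\,\Sym(L_p^*)=K^1([\ell])$, and it has dimension $c$ because multiplication by $\ell$ is injective. On the other hand, $K^1([\ell])$ is the kernel of the restriction from $H_p$ to $\ker\ell$. For the flag hyperplane $L_{p-1}$, this kernel has dimension $\delta_{p-1}=c$. Admissibility says $L_{p-1}\subset L_p$ is general for $H_p$, so $c$ is the minimal, hence generic, value of $\dim K^1([\ell])$. Combining the containment with the dimension count, $K^1([\ell])=\ell Q_p$ for all $[\ell]$ in a nonempty Zariski open set $\Omega\subset\PP(L_p^*)$. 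The key point is that $Q_p$ is a fixed space, independent of $\ell$. This step is where both hypotheses, simplicity and $\delta_p=\delta_{p-1}$, enter, and it is the main point to get right. In particular, one must use that generality is defined by minimal kernel dimension, so that the value $c$ attained at $L_{p-1}$ is the generic one.

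\emph{Step 3: $V_{A,p}\subset Q_p$.} The map $\xi\mapsto L(\cdot,\xi)|_{L_p}$ from $\C^n$ to $L_p^*$ is linear and surjective. The preimage $\widetilde\Omega$ of the cone over $\Omega$ is therefore a nonempty Zariski open, hence dense, subset of $\C^n$. Take $\xi\in\widetilde\Omega$ and put $\ell=L(\cdot,\xi)|_{L_p}$. Then the restricted standard element $\ell\,A(\cdot,\xi)|_{L_p}$ lies in $H_p\cap\ell\,\Sym(L_p^*)=K^1([\ell])=\ell Q_p$. Dividing by $\ell$ gives $A(\cdot,\xi)|_{L_p}\in Q_p$. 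The set of $\xi$ with $A(\cdot,\xi)|_{L_p}\in Q_p$ is Zariski closed and contains a dense set, so it is all of $\C^n$. Hence $V_{A,p}\subset Q_p$, and therefore $\dim V_{A,p}\le c$.

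\emph{Step 4: the alternative for $c\le p$.} We have $\dim V_{A,p}\le c\le p$, and $\dim V_{A,p}$ is the general value because the flag is admissible for $V_A$. Apply \cref{lem:pullback} to the primitive space $V_A$ on $p$-planes.
\begin{itemize}
\item If $\dim V_{A,p}<p$, part (i) gives $\rank A=\dim V_A=\dim V_{A,p}\le c$.
\item If $\dim V_{A,p}=p$, then $c=p$. Part (ii) gives either $\dim V_A=p=c$, or the second alternative, in which case part (iii) gives $V_A\subset S_1$.
\end{itemize}
In every case, either $\rank A\le c$ or $V_A\subset S_1$.
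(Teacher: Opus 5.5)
Your proposal is correct and follows the paper's proof essentially step for step. You identify $K_p^1\cong Q_p$ with $\dim Q_p=c$, and you use $L_p^*Q_p\subset H_p$ together with $\delta_{p-1}=c$ and the generality of $L_{p-1}$ to get $H_p\cap u\Sym(L_p^*)=uQ_p$ for general $u$. You then trap $A(\cdot,\xi)|_{L_p}$ in $Q_p$ through the standard elements and a closedness argument, and finish with the same case split via \cref{lem:pullback}. Your added remarks, that $c$ is the minimal and hence Zariski-generic kernel dimension and that general $\xi$ comes from the preimage of the cone over $\Omega$, only make explicit what the paper leaves implicit.
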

	
	\begin{proof}
		Since  the restriction \(H_{p+1}\to H_p\) is simple, \(K_p^2=0\),  thus the kernel of this restriction is $K_p^1$. Division by \(\ell_{p+1} \in L_{p+1}^*\), followed by restriction, identifies \(K_p^1\) with \(Q_p\), and
		\[
		\dim Q_p=\delta_p=c.
		\]
		By \cref{thm:moving-residual-space}, \(L_p^*Q_p\subset H_p\). For a
		general \(u\in L_p^*\),
		\[
		uQ_p\subset H_p\cap u\Sym(L_p^*).
		\]
		The right-hand side is the kernel of the next general hyperplane
		restriction \(H_p\to H_{p-1}\), and hence has dimension
		\(\delta_{p-1}=c\). Multiplication by \(u\) is injective, so equality
		holds:
		\[
		H_p\cap u\Sym(L_p^*)=uQ_p.
		\]
		
		For a general parameter \(\xi\) in $\mathbb C^n$, put
		\(u_\xi=L(\,\cdot\,,\xi)|_{L_p}\). The polarized identity \eqref{eq:polarization} gives
		\[
		u_\xi A(\,\cdot\,,\xi)|_{L_p}
		\in H_p\cap u_\xi\Sym(L_p^*)=u_\xi Q_p.
		\]
		Canceling \(u_\xi\ne0\) yields
		\(A(\,\cdot\,,\xi)|_{L_p}\in Q_p\) for general \(\xi\). Membership
		in the fixed finite dimensional space \(Q_p\) is a closed linear
		condition, so it holds for every \(\xi\). Therefore
		\(V_{A,p}\subset Q_p\) and \(\dim V_{A,p}\le c\).
		
		Let \(c'=\dim V_{A,p}\), which is the general restriction dimension
		of \(V_A\) because the flag is admissible for \(V_A\). If \(c<p\),
		then \(c'\le c<p\), and \cref{lem:pullback} gives
		\(\rank A=\dim V_A=c'\le c\). If \(c=p\), the same argument applies
		when \(c'<p\). When \(c'=c=p\), parts \emph{(ii)}--\emph{(iii)} of
		\cref{lem:pullback} give either \(\rank A=c\) or \(V_A\subset S_1\). Combining the two cases yields the stated alternative
		\(\rank A\le c\) or \(V_A\subset S_1\).
	\end{proof}
	
	\begin{lemma}\label{lem:triangular-staircase}
		Fix \(2\le p \le n-1\). Suppose that the restrictions
		\(H_{t+1}\to H_t\) are simple for every
		\(p+1\le t\le n-1\), with no condition on
		\(H_{p+1}\to H_{p}\). Then
		\[
		1 \le \delta_{n-1}\le\delta_{n-2}\le\cdots\le\delta_{p}.
		\]
		If \(n\ge2\kappa\) (this is only possible for $\kappa\geq 3$), then
		\begin{equation}\label{eq:triangular-staircase}
			\delta_t\ge\min\{n-t,\kappa+1\},
			\qquad p\le t\le n-1.
		\end{equation}
	\end{lemma}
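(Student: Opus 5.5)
We prove the two assertions separately: first the monotone chain, then the staircase bound by descending induction on \(t\) using \cref{lem:endpoint-equal-loss}.

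\emph{Monotonicity.} Fix \(t\) with \(p+1\le t\le n-1\). Since \(H_{t+1}\to H_t\) is simple, \(K_t^2=0\). Division by \(\ell_{t+1}\) followed by restriction identifies \(K_t^1\) with \(Q_t\), so \(\dim Q_t=\delta_t\). By \cref{thm:moving-residual-space}, \(L_t^*Q_t\subset H_t\). For a general \(u\in L_t^*\) this gives
\[
uQ_t\subset H_t\cap u\Sym(L_t^*).
\]
Since the flag is admissible, \(L_{t-1}\subset L_t\) is a general hyperplane for \(H_t\). Hence the kernel of \(H_t\to H_{t-1}\) has the same dimension as \(H_t\cap u\Sym(L_t^*)\) for general \(u\), namely \(\delta_{t-1}\). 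Multiplication by \(u\) is injective, so \(\delta_{t-1}\ge\delta_t\). Running \(t\) from \(n-1\) down to \(p+1\) gives \(\delta_{n-1}\le\cdots\le\delta_p\).

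\emph{The bound \(\delta_{n-1}\ge1\).} By \eqref{eq:polarization}, \(L(\cdot,\xi)A(\cdot,\xi)\in H\) for every \(\xi\). The forms \(L(\cdot,\xi)\) run over all of \(E\), and \(A(\cdot,\xi)\ne0\) for \(\xi\) in a dense open set. So for a dense open set of hyperplanes \(\{u=0\}\), the space \(H\cap u\Sym(V^*)\) is nonzero. Since \(L_{n-1}\) is general for \(H\), the minimal kernel dimension is attained there, and that minimum is positive. Hence \(\delta_{n-1}=\dim K^1_{n-1}\ge1\).

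\emph{The staircase bound \eqref{eq:triangular-staircase}.} Assume \(n\ge2\kappa\) and argue by descending induction on \(t\). The base case \(t=n-1\) is \(\delta_{n-1}\ge1\). For the inductive step, suppose \(\delta_{t+1}\ge\min\{n-t-1,\kappa+1\}\) with \(p\le t\le n-2\). If \(\delta_{t+1}\ge\kappa+1\), monotonicity finishes. Otherwise it suffices to show \(\delta_t\ne\delta_{t+1}\), because monotonicity then forces \(\delta_t\ge\delta_{t+1}+1\ge n-t\).

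Suppose instead \(\delta_t=\delta_{t+1}=c\le\kappa\). The step \(H_{t+2}\to H_{t+1}\) is simple, so \cref{lem:endpoint-equal-loss} applies with \(t+1\) in place of \(p\) (note \(t+1\ge3\)). Provided \(c\le t+1\), it yields \(\rank A\le c\le\kappa\) or \(V_A\subset S_1\). The first alternative contradicts \cref{prop:triangular-rank}, and the second contradicts \cref{remark:degree}.

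\emph{Main obstacle.} The delicate point is the hypothesis \(c\le t+1\) of \cref{lem:endpoint-equal-loss}, and this is where \(n\ge2\kappa\) must enter.

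If \(t+1\ge\kappa\), then \(c\le\kappa\le t+1\) and the argument above applies directly.

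If \(t+1<\kappa\), the inductive hypothesis gives \(c=\delta_{t+1}\ge n-t-1>n-\kappa\ge\kappa\), contradicting \(c\le\kappa\). So no equality can occur in this range, and the bound is simply inherited through monotonicity.

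I would check this bookkeeping carefully. In particular I would verify that the first part of \cref{lem:endpoint-equal-loss}, namely \(\dim V_{A,t+1}\le c\) combined with \cref{lem:pullback}(i), covers any leftover boundary case where \(c=t+1\) exactly.
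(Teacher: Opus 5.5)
Your proposal is correct and follows the paper's proof essentially step for step. Monotonicity comes from \(uQ_t\subset H_t\cap u\Sym(L_t^*)\) at the general next hyperplane, \(\delta_{n-1}\ge1\) comes from the standard elements \(L(\cdot,\xi)A(\cdot,\xi)\), and the staircase bound is a descending induction that applies \cref{lem:endpoint-equal-loss} after checking \(c\le t+1\) from \(n-t-1\le c\le\kappa\) and \(n\ge2\kappa\). Your case split on \(t+1\) versus \(\kappa\) is just an unpacking of the paper's chain \(t\ge n-\kappa\ge\kappa\ge\delta_t\).

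Your closing worry is unnecessary. \cref{lem:endpoint-equal-loss} is stated for \(c\le p\) inclusive, and its proof already treats \(c=p\): when \(\dim V_{A,p}=p\) it uses parts (ii)--(iii) of \cref{lem:pullback}, where part (i) alone would not apply. So no boundary case is left over.
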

	
	\begin{proof}
		The top restriction has nonzero kernel. Indeed, its defining form
		\(\ell_{n-1}\) is general, and the nondegenerate pairing
		\(L(z,\xi)\) identifies it, up to a nonzero scalar, with
		\(L(\,\cdot\,,\xi_1)\) for a general parameter \(\xi_1\). The
		standard element
		\(\ell_{n-1}A(\,\cdot\,,\xi_1)\) is nonzero and belongs to
		\(H_n\cap\ell_{n-1}S\). Hence \(\delta_{n-1}\ge1\).
		
		Fix \(p<t\le n-1\). The restriction
		\(H_{t+1}\to H_t\) is simple, so \(\dim Q_t=\delta_t\), and
		\cref{thm:moving-residual-space} gives \(L_t^*Q_t\subset H_t\). For
		the general form \(u\in L_t^*\) defining the next restriction,
		\[
		uQ_t\subset H_t\cap u\Sym(L_t^*).
		\]
		The two sides have dimensions \(\delta_t\) and \(\delta_{t-1}\),
		so \(\delta_{t-1}\ge\delta_t\). This proves
		monotonicity down to \(\delta_{p}\).
		
		We prove \eqref{eq:triangular-staircase} by downward induction on
		\(t\). The base case is \(\delta_{n-1}\ge1\). Suppose
		\(p<t\le n-1\) and the estimate is known for \(\delta_t\). If
		\(\delta_t\ge \kappa+1\), monotonicity proves the estimate for
		\(\delta_{t-1}\). Otherwise \(\delta_t\le \kappa\), and the induction
		hypothesis gives \(n-t\le \kappa\). If
		\(\delta_{t-1}=\delta_t\), then
		\[
		t\ge n-\kappa\ge \kappa\ge \delta_t.
		\]
		By \cref{lem:endpoint-equal-loss}, either \(\rank A\le \delta_t \le \kappa\) or
		\(V_A\subset S_1\). The first alternative contradicts
		\cref{prop:triangular-rank}. In the second,
		\(A\) has holomorphic degree at most one; by \cref{remark:degree},
		pointwise nonnegativity then forces its coefficient matrix to be positive
		semidefinite, contrary to \(A\notin\mathrm{SOS}_n\).
		Hence
		\(\delta_{t-1}\ge\delta_t+1\ge n-t+1=n-(t-1)\), completing the
		induction.
	\end{proof}
	
	\section{Divisibility Modulo a Hypersurface}\label{sect: divi dim}
	
	The next section  requires a lower bound for $\dim H_p$. When a nonsimple restriction produces a block $q\Sym^j(L_p^*)\subset H_p$ with nonconstant $q$ , we estimate the image of $H_p$  modulo $(q)$.

	\begin{proposition}\label{prop:quotient-divisibility} 
		Let $R=\C[x_1,\ldots,x_p]$, where $p\ge2$, let $0\ne q\in R$ be nonconstant, and set $\bar R=R/(q)$. 
		Let $U\subset\bar R$ be finite-dimensional. If
		$U\cap\ell\bar R\ne0$ for general $[\ell]\in\PP(R_1)$, then
		$\dim U\ge p-1$.
	\end{proposition}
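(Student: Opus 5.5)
The plan is a dimension count on an incidence correspondence. Put
\[
X=\{([\ell],[u])\in\PP(R_1)\times\PP(U):\ u\in\ell\bar R\}.
\]
$X$ is Zariski closed: for fixed degree bounds, the condition $u\in\ell\bar R$ is a rank condition on a matrix whose entries are polynomial in $(\ell,u)$, as in the proof of \cref{lem:generic-kernel-filtration}. By hypothesis the first projection $X\to\PP(R_1)\simeq\PP^{p-1}$ is dominant, so $\dim X\ge p-1$. If every fibre of the second projection $X\to\PP(U)$ has dimension at most one, then $p-1\le\dim X\le\dim\PP(U)+1=\dim U$, which is the claim. The case $p=2$ is trivial, since the hypothesis already gives $U\ne0$, so from now on $p\ge3$.

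The main step is the fibre bound. Fix $0\ne u\in U$ and a lift $f\in R$ with $f\notin(q)$. Then $u\in\ell\bar R$ means $f\in(\ell,q)$. Factor $q=\prod_i q_i^{e_i}$ into irreducibles. Since $f\notin(q)$, there is an index $i$ with $f=q_i^{a}f'$, where $a<e_i$ and $q_i\nmid f'$.

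Let $Z=V(q_i)$ and let $P$ be the generic point of a component of $Z\cap V(\ell)$. For $\ell$ outside a proper closed set, $\ell$ and $q_i$ form a regular system of parameters of the $2$-dimensional regular local ring $R_P$, and the other $q_k$ are units there. Hence $(\ell,q)R_P=(\ell,q_i^{e_i})R_P$. If moreover $f'(P)\ne0$, then $f\in(\ell,q)$ would force $q_i^{a}\in(\ell,q_i^{e_i})R_P$, which is impossible because $a<e_i$ and $R_P/(\ell)$ is a DVR with uniformizer $q_i$.

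Consequently, the fibre over $[u]$ is contained in the union of two sets. The first is a proper closed set of non-transversal $\ell$; it has to be controlled separately, by a slightly finer local computation or by an explicit dimension bound. The second is the set of $[\ell]$ such that $V(\ell)$ contains some irreducible component $W$ of $Z\cap V(f')$. Each such $W$ has dimension exactly $p-2$ because $q_i\nmid f'$, and there are finitely many of them.

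The hyperplanes through the origin containing $W$ are exactly those containing the linear span of $W$. This span has dimension at least $p-2$; when $p\ge3$, $W$ is positive dimensional, so it is not just the origin. Hence these hyperplanes form a linear subspace of $\PP(R_1)$ of dimension at most one, and the second set has dimension at most one.

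The hardest part is making the transversality and exceptional loci uniform enough that the first projection still sees a one-dimensional bound on fibres over a general point of the image of $X$ in $\PP(U)$. It suffices to bound fibres over a dense open subset of each component of $X$ dominating $\PP(R_1)$. I would therefore first pass to an irreducible component $X_0$ of $X$ that dominates $\PP(R_1)$, and then apply the local argument at a general point of $X_0$, where $\ell$ can be assumed transversal to every $q_i$.
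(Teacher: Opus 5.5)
Your outline is correct, but the key fibre bound follows a genuinely different route from the paper. The paper bounds the whole set $\Sigma_{\bar f}$ with no genericity assumption on $\ell$ (\cref{prop:moving-mod-q}). It chooses the irreducible factor $s$ exactly as you choose $q_i$ and reduces to $f_1\in(\ell,s)$. It then passes to the normalization $Y$ of the projective closure of $V(s)$ and shows that only finitely many effective divisors can occur as $\operatorname{div}(\sigma_\ell)$. Since sections of $\nu^*\mathcal O(1)$ with equal divisors are proportional, this gives a bound that is uniform in $\ell$, and that uniformity is what lets the paper control every irreducible piece of the incidence set.

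You instead use only transversal $\ell$. The DVR computation in $R_P/(\ell)$ turns $f\in(\ell,q)$ into the set-theoretic inclusion $V(q_i)\cap V(\ell)\subset V(f')$. The linear-span observation then confines such $[\ell]$ to finitely many sets $\PP(\Ann(\Span W))$, each of dimension at most one. This is more elementary (no normalization, no divisors on normal varieties), at the price of a Bertini-type genericity input.

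The uniformity you flag as the hardest part is in fact immediate. The transversal locus $\Omega\subset\PP(R_1)$ is determined by $q$ alone. So for an irreducible piece $X_0$ dominating $\PP(R_1)$, the set $X_0'=X_0\cap(\Omega\times\PP(U))$ is dense open in $X_0$, and every fibre of $X_0'\to\PP(U)$ lies in your finite union of lines. Every nonempty fibre of a dominant morphism of irreducible varieties has dimension at least the difference of dimensions, so $p-1\le\dim X_0'\le\dim\PP(U)+1=\dim U$.

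Two points need tightening. First, describing $X$ by a rank condition gives only constructibility, not closedness, and even that needs an explicit degree bound on the cofactors, as in \eqref{eq:bounded-divisibility-lift}; constructibility is all the dimension count uses. Second, your transversality condition must include $V(q_i)\cap V(\ell)\neq\emptyset$. This holds for general $\ell$ because the top-degree form of $q_i$ does not vanish identically on a general hyperplane. Without it the local argument imposes no constraint, and $[\ell]$ need not lie in your second set.
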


	We need the following lemma.

	\begin{lemma}\label{prop:moving-mod-q}
		For $0\ne\bar f\in\bar R$, define
		$\Sigma_{\bar f}:=	\{[\ell]\in\PP(R_1):\bar f\in\ell\bar R\}$.	 Then one has $\dim_{\mathbb C}\Sigma_{\bar f}\le1$.
	\end{lemma}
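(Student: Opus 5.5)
The plan is to reduce the membership $\bar f\in\ell\bar R$ to a divisibility statement on the hyperplane $H_\ell=\{\ell=0\}$, and then show that if this holds for a two-dimensional family of hyperplanes, $f$ must already lie in $(q)$, contradicting $\bar f\neq0$.

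\textbf{Reformulation.} Fix a lift $f\in R$ of $\bar f$, so that $f\notin(q)$. For $[\ell]\in\PP(R_1)$ put $H_\ell=\{\ell=0\}\subset\C^p$. Identifying $R/(\ell)$ with $\Sym(H_\ell^*)$, the condition $\bar f\in\ell\bar R$, i.e.\ $f\in(\ell,q)$, becomes
\[
q|_{H_\ell}\ \text{divides}\ f|_{H_\ell}\ \text{in}\ \Sym(H_\ell^*),
\]
with the convention that if $q|_{H_\ell}=0$ then the condition reads $f|_{H_\ell}=0$. The set of $\ell$ with $\ell\mid q$ is finite, so it contributes nothing to the dimension count, and we may discard it. Suppose, for contradiction, that $\dim\Sigma_{\bar f}\ge2$. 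Since $f\notin(q)$, some irreducible factor $g$ of $q$, occurring with exact multiplicity $m$ (so $g^m\| q$), satisfies $g^m\nmid f$. Expand $f$ $g$-adically near a general point of $Z(g)$ as $f\equiv f_kg^k \pmod{g^{k+1}}$ with $k<m$ and $g\nmid f_k$.

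\textbf{Key step.} For $[\ell]\in\Sigma_{\bar f}$ with $H_\ell$ transverse to $Z(g)$ at general points of $Z(g)\cap H_\ell$, the restriction $g|_{H_\ell}$ is locally reduced there, and the other factors of $q$ are units near a general point of $Z(g)$. Hence divisibility of $f|_{H_\ell}$ by $g|_{H_\ell}^m$, hence by $g|_{H_\ell}^{k+1}$, forces $f_k|_{H_\ell}$ to vanish on $Z(g)\cap H_\ell$. Now choose an irreducible curve $C\subset\Sigma_{\bar f}$ consisting of such transverse hyperplanes. The union $\bigcup_{[\ell]\in C}H_\ell$ is all of $\C^p$: for any $y$, the curve $C$ meets the hyperplane $\{[\ell]:\ell(y)=0\}$ of $\PP(R_1)$. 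It follows that $\bigcup_{[\ell]\in C}\bigl(Z(g)\cap H_\ell\bigr)=Z(g)$, apart from the non-general points discarded above. So $f_k$ vanishes on a dense subset of $Z(g)$, giving $g\mid f_k$, a contradiction.

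\textbf{Main obstacle.} The difficulty is producing the curve $C$ inside the transverse locus, and this is exactly where $\dim\Sigma_{\bar f}\ge2$ is used. A single pencil may consist entirely of hyperplanes along which $g|_{H_\ell}$ is nonreduced, as in degenerate examples where $g$ is linear or a cone. I would first show that the set $B_g$ of $[\ell]$ for which $g|_{H_\ell}$ has a multiple factor lying on $Z(g)$ has dimension at most $1$. The reason is that $H_\ell$ would then have to be tangent to $Z(g)$ along a whole divisor of $Z(g)\cap H_\ell$, and by a Bertini-type or dual-variety dimension count only a curve's worth of hyperplanes can do this. Granting that bound, a two-dimensional $\Sigma_{\bar f}$ contains a curve $C\not\subset B_g$, and the argument above applies. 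If this dimension bound on $B_g$ proves awkward in general, the fallback is to verify it separately when $Z(g)$ is a cone over a lower-dimensional variety, since that is where tangency along divisors occurs.
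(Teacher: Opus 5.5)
Your reduction is the same as the paper's: pick an irreducible factor $g$ of $q$ with $\operatorname{ord}_g f<\operatorname{ord}_g q$ and write $f=g^kf_k$. As written, however, the argument has two gaps.

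The first is the one you flag yourself. The bound $\dim B_g\le 1$ is only granted, supported by a sketched Bertini/dual-variety count and a fallback. You do not need it, because transversality plays no role in your key step. If $\ell\nmid q$, then $q|_{H_\ell}\neq 0$, and $f\in(\ell,q)$ gives $g|_{H_\ell}^{k}\,f_k|_{H_\ell}=g|_{H_\ell}^{m}\,q_1|_{H_\ell}\,c$ for some $c\in\Sym(H_\ell^*)$, where $q=g^mq_1$. Since $\Sym(H_\ell^*)$ is a domain and $g|_{H_\ell}\neq 0$, you may cancel $g|_{H_\ell}^k$ and obtain $g|_{H_\ell}^{m-k}\mid f_k|_{H_\ell}$. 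So $f_k$ vanishes on all of $Z(g)\cap H_\ell$ for every $[\ell]\in\Sigma_{\bar f}$ with $\ell\nmid q$, whether or not $g|_{H_\ell}$ is reduced.

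The second gap is real. The claim ``$C$ meets every hyperplane $\{[\ell]:\ell(y)=0\}$'' holds for $\overline C$, not for $C\subset\Sigma_{\bar f}$, and $\Sigma_{\bar f}$ need not be closed. For example, take $q=x_1$ and $f=x_2$ in $\C^3$. Then $\Sigma_{\bar f}$ is the line through $[x_1],[x_2]$ punctured at $[x_1]$, and every $Z(g)\cap H_\ell$ with $[\ell]\in\Sigma_{\bar f}$ equals $\{x_1=x_2=0\}$, which is not dense in $Z(g)$.

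In general, the points of $Z(g)$ missed by $C$ lie in the hyperplanes $H_{\ell_0}$ with $[\ell_0]\in\overline C\setminus C$. So you must at least ensure $[g]\notin\overline C$ when $g$ is linear. Moreover, choosing curves inside $\Sigma_{\bar f}$ presupposes that it is constructible, which you do not address.

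A simpler finish avoids both issues. If $g|_{H_\ell}$ is a nonconstant polynomial, then $Z(g)\cap H_\ell$ is pure of dimension $p-2$. By the first paragraph it is contained in $Z(g)\cap Z(f_k)$, which has dimension at most $p-2$ because $g\nmid f_k$. Hence $H_\ell$ contains one of the finitely many $(p-2)$-dimensional components $W_i$ of $Z(g)\cap Z(f_k)$. Then $\ell$ annihilates $\Span W_i$, a space of dimension at least $p-2$, which leaves at most a pencil of $\ell$ for each $i$. If instead $g|_{H_\ell}$ is constant, then $\ell\mid g-g(0)$, which happens for only finitely many $[\ell]$. Together with the finitely many $[\ell]$ with $\ell\mid q$, this puts $\Sigma_{\bar f}$ inside finitely many lines and points, with no constructibility needed.

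With these repairs, your route is a genuinely different and more elementary alternative to the paper's. The paper keeps the stronger condition $\tilde f_1\in\tilde\ell\,(R/(s))$ and finishes on the normalization of $\overline{V(s)}$. There it uses that only finitely many divisors $\operatorname{div}(\sigma_\ell)$ can occur and that sections with equal divisors are proportional. Your version needs only the vanishing of $f_k$ on $Z(g)\cap H_\ell$ and a dimension count.
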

	
	\begin{proof}
		Choose a representative $f\in R$ of $\bar f$. Since $\bar f\ne0$, there
		is an irreducible factor $s$ of $q$ for which the $s$-adic order of
		$f$ is smaller than that of $q$. Write
		\[
		f=s^af_1,\qquad q=s^mq_1,
		\]
		where $a<m$ and $s\nmid f_1q_1$. If $[\ell]\in\Sigma_{\bar f}$,
		then $f=qb+\ell c$ for some $b,c\in R$. When $s\nmid\ell$, the
		irreducibility of $s$ gives $\gcd(s,\ell)=1$. The identity first implies
		$s^a\mid c$; after dividing by $s^a$ and reducing modulo $s$, it gives
		\[
		\widetilde f_1\in\widetilde\ell\,(R/(s)),
		\]
		where tildes denote images in $R/(s)$. If instead $s\mid\ell$, then $s$
		is linear and $[\ell]=[s]$, so this case contributes at most one point.
		Define
		\[
		T_{\widetilde f_1}:=
		\{[\ell]\in\PP(R_1):
		\widetilde f_1\in\widetilde\ell\,(R/(s))\}.
		\]
		Apart from the possible single point just described,
		$\Sigma_{\bar f}$ is contained in $T_{\widetilde f_1}$. It therefore
		suffices to prove $\dim T_{\widetilde f_1}\le1$.
		
		Let $X=V(s)\subset\C^p$ and let $\overline X\subset\PP^p$ be its
		projective closure. Let $\nu:Y\to\overline X$ be the normalization and
		put $Y^\circ:=\nu^{-1}(X)$. Then $Y$ is an irreducible normal
		projective variety. We use standard facts about normalization and
		divisors on normal varieties; see, for example, \cite{Hartshorne1977}. Set
		\[
		\mathcal L:=\nu^*\mathcal O_{\overline X}(1).
		\]
		Each $\ell\in R_1$ induces a section $\sigma_\ell\in H^0(Y,\mathcal L)$.
		If $[\ell]\in T_{\widetilde f_1}$ and $\sigma_\ell\ne0$, then
		$f_1=\ell c$ on $X$. Over $Y^\circ$, every prime divisor in the zero
		divisor of $\sigma_\ell$ is therefore contained in the zero divisor of the
		regular function $(f_1\circ\nu)|_{Y^\circ}$. After taking closures in
		$Y$, all other prime-divisor components belong to the finitely many
		codimension-one components of the fixed boundary $Y\setminus Y^\circ$.
		Hence there are finitely many prime divisors $D_1,\ldots,D_t$ that can
		occur in $\operatorname{div}(\sigma_\ell)$ as $[\ell]$ varies in
		$T_{\widetilde f_1}$.
		
		For each $D_i$, the orders of vanishing of the nonzero sections in the
		finite-dimensional linear system
		\[
		W:=\{\sigma_\ell:\ell\in R_1\}\subset H^0(Y,\mathcal L)
		\]
		are uniformly bounded. Indeed, the subspaces
		\[
		W_{i,r}:=\{\sigma\in W:\operatorname{ord}_{D_i}(\sigma)\ge r\},
		\qquad r\ge0,
		\]
		form a descending chain. Its stable value is zero, because a nonzero
		section has finite order along $D_i$. Therefore only finitely many
		effective divisors can occur as $\operatorname{div}(\sigma_\ell)$ for
		$[\ell]\in T_{\widetilde f_1}$.
		
		Two nonzero sections of $\mathcal L$ with the same divisor are scalar
		multiples. Indeed, their ratio is a rational function with neither a zero
		nor a pole along any prime divisor. Normality makes this ratio and its
		inverse regular on $Y$, and every global regular function on the
		irreducible projective variety $Y$ is constant.
		
		Finally, consider the linear map
		\[
		\sigma:R_1\longrightarrow H^0(Y,\mathcal L),\qquad
		\ell\longmapsto\sigma_\ell.
		\]
		Its kernel has dimension at most one. In fact, a nonzero linear form in
		$\ker\sigma$ vanishes on $X=V(s)$ and hence belongs to the prime ideal
		$(s)$; this is possible only when $s$ is linear, in which case the kernel
		is $\C s$. For any divisor occurring above, the nonzero sections in
		$\operatorname{im}\sigma$ having that divisor span at most one line. The
		inverse image of that
		line in $R_1$ therefore has dimension at most
		$\dim\ker\sigma+1\le2$. Since only finitely many divisor types occur,
		$T_{\widetilde f_1}\setminus\PP(\ker\sigma)$ is contained in a finite union
		of projective linear spaces of dimension at most one. The omitted
		projectivized kernel has dimension at most zero. Hence
		$\dim T_{\widetilde f_1}\le1$, and the result follows.
	\end{proof}

	\begin{proof}[Proof of  \cref{prop:quotient-divisibility}]
		Set $r:=\dim U$ and $m:=\deg q$, and let
		$\pi:R\to\bar R$ be the quotient map. Choose a finite-dimensional
		polynomial space $F\subset R$ such that $\pi|_F:F\to U$ is an
		isomorphism, and choose $d$ with $F\subset R_{\le d}$.
		
		We first record a uniform degree bound. If $f\in F$ and
		$\bar f\in\ell\bar R$, then there exist $b,c\in R$ such that
		\begin{equation}\label{eq:bounded-divisibility-lift}
			f=qc+\ell b,\qquad \deg c\le d,\qquad \deg b\le m+d-1.
		\end{equation}
		Indeed, begin with $f=qc_0+\ell b_0$ and reduce modulo $\ell$. Writing
		$h_\ell$ for the image of $h\in R$ in the integral domain
		$R/(\ell)\simeq\C[y_1,\ldots,y_{p-1}]$, we obtain
		$f_\ell=q_\ell(c_0)_\ell$. If $q_\ell=0$ or $f_\ell=0$, then
		$\ell\mid f$, and we may take $c=0$ and $b=f/\ell$. Otherwise
		\[
		\deg(c_0)_\ell=\deg f_\ell-\deg q_\ell\le d.
		\]
		Choose a lift $c\in R_{\le d}$ of $(c_0)_\ell$. Then $f-qc$ is
		divisible by $\ell$. If it is zero, take $b=0$; otherwise
		$b:=(f-qc)/\ell$ has degree at most $m+d-1$. This proves
		\eqref{eq:bounded-divisibility-lift}.
		
		Consider the finite-dimensional incidence set
		\[
		\widetilde{\mathcal I}:=
		\left\{(\ell,f,c,b)\in
		(R_1\setminus\{0\})\times(F\setminus\{0\})\times
		R_{\le d}\times R_{\le m+d-1}:f=qc+\ell b\right\}.
		\]
		It is locally closed in a finite-dimensional affine space. The morphism
		\[
		\Phi:\widetilde{\mathcal I}\longrightarrow
		\PP(R_1)\times\PP(U),\qquad
		(\ell,f,c,b)\longmapsto([\ell],[\bar f]),
		\]
		has constructible image, by Chevalley's theorem \cite{CC55}. The degree
		bound above shows that this image is exactly
		\[
		\mathcal I:=\{([\ell],[u])\in\PP(R_1)\times\PP(U):
		0\ne u\in U\cap\ell\bar R\}.
		\]
		By hypothesis, the first projection of $\mathcal I$ contains a nonempty
		open subset of $\PP(R_1)$; hence $\dim\mathcal I\ge p-1$.
		
		Decompose the constructible set $\mathcal I$ into finitely many
		irreducible locally closed subvarieties, and fix one of them, say $Z$.
		Let
		\[
		T:=\overline{\operatorname{pr}_2(Z)}\subseteq\PP(U).
		\]
		Then $\operatorname{pr}_2|_Z:Z\to T$ is dominant. By the fiber-dimension
		theorem, its general fiber has dimension $\dim Z-\dim T$. For every
		$[u]\in\operatorname{pr}_2(Z)$, choose a nonzero representative
		$u\in U$. The corresponding fiber is contained in
		\[
		\Sigma_u=\{[\ell]\in\PP(R_1):u\in\ell\bar R\},
		\]
		which has dimension at most one by \cref{prop:moving-mod-q}. Therefore
		\[
		\dim Z\le\dim T+1\le\dim\PP(U)+1=r.
		\]
		Taking the maximum over the finitely many pieces gives
		$\dim\mathcal I\le r$. Combining the two bounds on $\dim\mathcal I$
		yields $\dim U=r\ge p-1$.
	\end{proof}

	\section{Proof of \cref{conj: weak sos} for \texorpdfstring{$n=6$ and $n\ge8$}{n = 6 and n >= 8}}\label{sect: big dim}
	
	Throughout the proof of the Weak SOS Conjecture, we use the fixed flag
	\(\C^n=L_n\supset L_{n-1}\supset\cdots\supset L_2\)
	in \eqref{eq:weak-sos-admissible-flag}, which is admissible for both \(H\)
	and \(V_A\). Recall that $\kappa=\kappa(n)$ be the unique integer such that $\binom{\kappa}{2}<n\leq \binom{\kappa+1}{2}$ and $\Lambda_n=n\kappa-\binom{\kappa}{2}-1$. In \cref{lem:triangular-staircase}, it shows that when $n\geq 2\kappa$, there are better estimates on the loss of dimensions under restrictions. In this section, we consider this case $n\geq 2\kappa$  separately. The possible cases are $n=6$ and $n\geq 8$: for \(n=6\), one has
	\(\kappa=3\); for \(8\le n\le10\), one has \(\kappa=4\); and if
	\(\kappa\ge5\), then
	\(n\ge\binom{\kappa}{2}+1\ge2\kappa\). We establish the following theorem.

	\begin{theorem}\label{thm: kappa big}The \cref{conj: weak sos} holds true for  \(n=6\) or \(n\ge8\). 
	\end{theorem}
	
	We first introduce the following definition.
	\begin{definition}
		A nonsimple restriction \(H_{p+1} \xrightarrow{N} H_p\) is called
		\emph{exceptional} if
		\[
		K_p^3=0
		\qquad\text{and}\qquad
		Q_p^2=\C\cdot1.
		\]
	\end{definition}
	When a nonsimple restriction occurs, let \(p_*\) denote the target
	dimension of the first one, thus  the first nonsimple step is
	\[
	H_{p_*+1}\longrightarrow H_{p_*}.
	\]
	Thus,  by \cref{prop:persistence-nonsimplicity} the simple/nonsimple pattern has the form
	\[
	H_n\xrightarrow{S}\cdots\xrightarrow{S}H_{p_*+1}
	\xrightarrow{N}H_{p_*}\xrightarrow{N}\cdots\xrightarrow{N}H_2,
	\]
	with the obvious interpretation, when there are no simple restrictions or non-simple restrictions.
	
	The proof of \cref {thm: kappa big} will be divided into three  cases: 
	\begin{itemize} 
		\item  all restrictions along the
		fixed flag are simple,  this is done in  \cref{prop:all-simple};
		\item the first nonsimple restriction is  nonexceptional, this is done in  \cref{prop:nonexceptional};
		\item the first nonsimple restriction is exceptional, this is done in \cref{prop:exceptional}.
	\end{itemize}
	
	We shall repeatedly use the following orthogonal splitting. 	For further discussion of orthogonality, we refer the reader to
	\cite{GN23,GN24}.
	
	\begin{lemma}
		\label{lem:orthogonal-central-split}
		For every \(2\le p\le n-2\),
		\begin{equation}\label{eq:orthogonal-central-split}
			\rho\ge \rho_p+\rho_{n-p}.
		\end{equation}
	\end{lemma}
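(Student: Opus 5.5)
The plan is to use the orthogonality already sketched in the introduction. Fix a $p$-plane $L$ with $2\le p\le n-2$, and let $L^\perp$ be its Hermitian orthogonal complement, of dimension $n-p\ge 2$. Put $\mathbf h(z)=(h_1(z),\ldots,h_\rho(z))\in\C^\rho$ and
\[
X_L=\Span\{\mathbf h(z):z\in L\},\qquad X_{L^\perp}=\Span\{\mathbf h(w):w\in L^\perp\}.
\]
For $z\in L$ and $w\in L^\perp$ we have $L(z,\bar w)=\sum_j z_j\bar w_j=\langle z,w\rangle=0$. Substituting $\xi=\bar w$ into \eqref{eq:polarization} therefore gives
\[
\sum_\mu h_\mu(z)\overline{h_\mu(w)}=L(z,\bar w)A(z,\bar w)=0,
\]
so $X_L\perp X_{L^\perp}$ in $\C^\rho$ with the standard Hermitian product. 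Hence $\dim X_L+\dim X_{L^\perp}\le\rho$.

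Next we identify $\dim X_L$ with $\dim(H|_L)$. Consider the map $\C^\rho\to H|_L$ sending $c\mapsto\sum_\mu \bar c_\mu h_\mu|_L$. A vector $c$ is orthogonal to $X_L$ exactly when $\sum_\mu \bar c_\mu h_\mu(z)=0$ for all $z\in L$, i.e.\ when it lies in the kernel of this map. The map is onto $H|_L$, since the $h_\mu$ span $H$. Rank–nullity then gives $\dim X_L=\rho-\dim X_L^{\perp}=\dim(H|_L)$. The same argument gives $\dim X_{L^\perp}=\dim(H|_{L^\perp})$. So for every $p$-plane $L$,
\[
\rho\ge\dim(H|_L)+\dim(H|_{L^\perp}).
\]

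It remains to choose $L$ so that both terms are the general values, namely $\rho_p$ for $L$ and $\rho_{n-p}$ for $L^\perp$. These general values are attained on the fixed admissible flag, because the flag is admissible for $H$, so $L_p$ and $L_{n-p}$ are general planes. The set $\Omega_1=\{L\in\operatorname{Gr}(p,\C^n):\dim(H|_L)=\rho_p\}$ is nonempty and Zariski open, being a maximal-rank locus. The main obstacle is that the map $L\mapsto L^\perp$ is antiholomorphic, not algebraic. To handle this, view $\operatorname{Gr}(p,\C^n)$ as a real algebraic variety. The composite $\perp:\operatorname{Gr}(p)\to\operatorname{Gr}(n-p)$ is a real-analytic diffeomorphism, and $\Omega_2=\{M\in\operatorname{Gr}(n-p,\C^n):\dim(H|_M)=\rho_{n-p}\}$ is the complement of a proper complex algebraic subset. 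Such a complement is open and dense in the Euclidean topology. Its preimage $\perp^{-1}(\Omega_2)$ is therefore open and dense, and so it meets the dense open set $\Omega_1$. Any $L$ in $\Omega_1\cap\perp^{-1}(\Omega_2)$ gives $\rho\ge\rho_p+\rho_{n-p}$, which is \eqref{eq:orthogonal-central-split}.
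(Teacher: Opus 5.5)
Your proposal is correct and follows the paper's proof essentially step for step. Both arguments obtain orthogonality of the evaluation spans from the polarized identity at $\xi=\bar w$, and both identify each span's dimension with $\dim(H|_L)$ through the kernel of the restriction map. Both then choose $L$ with $L$ and $L^\perp$ simultaneously general, using that the orthogonal-complement map is a diffeomorphism and that nonempty Zariski-open sets are Euclidean open and dense. One small point: your map $c\mapsto\sum_\mu\bar c_\mu h_\mu|_L$ is conjugate-linear, but rank--nullity still applies after composing with complex conjugation, so this is harmless.
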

	
	\begin{proof}
		Write
		\[
		\mathbf h(z):=(h_1(z),\ldots,h_\rho(z))\in\C^\rho.
		\]
		It is easy to see that $\dim \mbox{Span}\{\mathbf h(z):z\in \mathbb C^n\}=\rho$.
		For any \(p\)-plane \(L\), the annihilator  $\{\mathbf c\in \mathbb C^\rho: \sum_{\mu=1}^\rho c_\mu h_\mu(z)=0 \mbox{~for~ all~} z\in L\}$ of
		\(\Span\{\mathbf h(z):z\in L\}\)  is precisely the kernel of the
		restriction map \(H\to H|_L\), through the map $\mathbf c\mapsto \sum_{\mu=1}^\rho c_\mu h_{\mu}(z)$.  Hence
		\[
		\dim\Span\{\mathbf h(z):z\in L\}=\dim(H|_L).
		\]
		The locus on which the right-hand side equals \(\rho_p\) is Zariski
		open, and therefore Euclidean open and dense, in
		\(\operatorname{Gr}(p,\C^n)\). The orthogonal-complement map is a
		diffeomorphism from \(\operatorname{Gr}(p,\C^n)\) to
		\(\operatorname{Gr}(n-p,\C^n)\). We may therefore choose an orthogonal
		decomposition \(\C^n=L\oplus L^\perp\), with \(\dim L=p\) and $\dim L^\perp=n-p$, such that
		both summands realize their corresponding general restriction
		dimensions for $H$. For \(z\in L\) and
		\(w\in L^\perp\), the polarized identity gives
		\[
		\sum_{\mu=1}^\rho h_\mu	(z)\overline{h_\mu(w)}=:\langle\mathbf h(z),\mathbf h(w)\rangle
		=L(z,\bar w)A(z,\bar w)=0.
		\]
		Thus the two evaluation spans $\Span\{\mathbf h(z):z\in L\}$ and $\Span\{\mathbf h(z):z\in L^\perp\}$ are orthogonal subspaces of
		\(\C^\rho\), of dimensions \(\rho_p\) and \(\rho_{n-p}\),
		respectively. This proves \eqref{eq:orthogonal-central-split}.
	\end{proof}
	We now deal with the case that all the restrictions are simple.
	\begin{proposition}\label{prop:all-simple}
		Assume that \(n=6\) or \(n\ge8\), and that every restriction along
		the fixed flag is simple. Then \(\rho\ge\Lambda_n\).
	\end{proposition}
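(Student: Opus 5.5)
The plan is to write \(\rho=\rho_n\) telescopically as
\[
\rho=\rho_2+\sum_{t=2}^{n-1}\delta_t ,
\]
and to bound each piece from below using only \cref{lem:triangular-staircase}, \cref{thm:moving-residual-space} and \cref{lem:two-variable}. Since every restriction along the fixed flag is simple, \cref{lem:triangular-staircase} applies with \(p=2\). Because \(n\ge2\kappa\) in both cases \(n=6\) and \(n\ge8\), estimate \eqref{eq:triangular-staircase} gives
\[
\delta_t\ge\min\{n-t,\kappa+1\},\qquad 2\le t\le n-1.
\]

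First I bound the sum of the losses. For \(n-\kappa-1\le t\le n-1\) the minimum is \(n-t\), which runs through \(1,\ldots,\kappa+1\). For \(2\le t\le n-\kappa-2\) the minimum is \(\kappa+1\), and there are \(n-\kappa-3\) such indices; this count is nonnegative since \(n\ge2\kappa\ge\kappa+3\). Hence
\[
\sum_{t=2}^{n-1}\delta_t\ \ge\ \binom{\kappa+2}{2}+(\kappa+1)(n-\kappa-3).
\]
A short computation shows that the right-hand side equals \(\Lambda_n-(n-3\kappa-1)\). This alone is not enough: for \(n=6\), \(\kappa=3\), it falls short of \(\Lambda_n\) by \(4\). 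So a good lower bound for \(\rho_2\) is needed.

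Second, I bound \(\rho_2\). The last restriction \(H_3\to H_2\) is simple, so \(\dim Q_2=\delta_2\). By \cref{thm:moving-residual-space} with \(j=1\) we have \(L_2^*Q_2\subset H_2\), where \(L_2^*\) is spanned by two coordinates \(x,y\). Also \(Q_2\neq0\), since \(\delta_2\ge1\). Then \cref{lem:two-variable} gives
\[
\rho_2\ge\dim(xQ_2+yQ_2)\ge\delta_2+1.
\]
Since \(n-2\ge2\kappa-2\ge\kappa+1\) for \(\kappa\ge3\), the staircase estimate at \(t=2\) yields \(\delta_2\ge\kappa+1\), and hence \(\rho_2\ge\kappa+2\).

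Combining the two bounds,
\[
\rho\ \ge\ \Lambda_n-(n-3\kappa-1)+(\kappa+2)=\Lambda_n+(n-2\kappa)+1>\Lambda_n,
\]
using \(n\ge2\kappa\). The only real obstacle is noticing that the crude telescoping bound with \(\rho_2\ge1\) fails for small \(n\) such as \(n=6\). The bottom block must be exploited through \cref{lem:two-variable} applied to the residual space \(Q_2\). The rest is bookkeeping; the step I would double-check is that the range of \(t\) in the staircase estimate legitimately includes \(t=2\). It does, because the hypothesis of \cref{lem:triangular-staircase} places no condition on \(H_{p+1}\to H_p\), and here even that step is simple. The orthogonal splitting \cref{lem:orthogonal-central-split} should not be needed in this all-simple case.
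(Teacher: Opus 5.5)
Your proof is correct. It shares its two ingredients with the paper's proof: the staircase bound from \cref{lem:triangular-staircase}, and $\rho_2\ge\delta_2+1\ge\kappa+2$ from \cref{lem:two-variable} applied to $L_2^*Q_2\subset H_2$. The difference is in how the losses are assembled. The paper never telescopes up to $\rho_n$. It invokes the orthogonal splitting \cref{lem:orthogonal-central-split} with $p=\kappa$ and uses only the losses $\delta_t\ge\kappa+1$ for $t\le n-\kappa-1$. This gives $\rho_\kappa\ge\kappa^2$ and $\rho_{n-\kappa}\ge\kappa^2+(n-2\kappa)(\kappa+1)$, hence $\rho\ge n(\kappa+1)-2\kappa=\Lambda_n+(n-2\kappa)+\binom{\kappa}{2}+1$. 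You count every loss exactly once, including the small top losses $\delta_t\ge n-t$ for $t\ge n-\kappa-1$ that the paper discards, and you reach $\rho\ge\Lambda_n+(n-2\kappa)+1$.

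Your version is more elementary in this case, since it never needs the polarized orthogonality of the evaluation spans over $L$ and $L^\perp$. The paper's version gives a bound stronger by $\binom{\kappa}{2}$. It also has the same shape as the estimates in the nonsimple cases, where counting the bottom of the flag twice is genuinely needed. For instance, with $n=8$ and a nonexceptional first nonsimple step at $p_*=2$, telescoping from $\rho_2\ge B_2=4$ with the staircase bounds gives only $24<\Lambda_8=25$.

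One slip to fix: $\binom{\kappa+2}{2}+(\kappa+1)(n-\kappa-3)$ equals $\Lambda_n+(n-3\kappa-1)$, not $\Lambda_n-(n-3\kappa-1)$. Your check that it falls short by $4$ at $n=6$ and your final value $\Lambda_n+(n-2\kappa)+1$ both already use the correct sign. So only the two intermediate expressions need correcting.
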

	
	\begin{proof}
		From \cref{lem:orthogonal-central-split}, we only need to estimate $\rho_\kappa$ and $\rho_{n-\kappa}$.

		Apply \cref{lem:triangular-staircase} with \(p=2\). Since
		\(n-2\ge \kappa+1\) in the present range,
		\(\delta_2\ge \kappa+1\). The bottom residual space \(Q_2\) has
		dimension \(\delta_2\), and
		\cref{thm:moving-residual-space} gives \(L_2^*Q_2\subset H_2\). Hence
		\cref{lem:two-variable} yields
		\(\rho_2\ge\delta_2+1\ge \kappa+2\).
		
		In passing from \(\rho_2\) to \(\rho_\kappa\), the relevant losses are
		\(\delta_2,\ldots,\delta_{\kappa-1}\), all of which are at least \(\kappa+1\)  by \cref{lem:triangular-staircase}, because
		\(n\ge2\kappa\). Thus
		\[
		\rho_\kappa\ge(\kappa+2)+(\kappa-2)(\kappa+1)=\kappa^2.
		\]
		Similarly, the losses used in passing from \(\rho_\kappa\) to
		\(\rho_{n-\kappa}\) are \(\delta_\kappa,\ldots,\delta_{n-\kappa-1}\), so
		\[
		\rho_{n-\kappa}\ge \kappa^2+(n-2\kappa)(\kappa+1).
		\]
		
		Applying \cref{lem:orthogonal-central-split} with \(p=\kappa\) gives
		\(\rho\ge\rho_\kappa+\rho_{n-\kappa}\ge n(\kappa+1)-2\kappa\). Subtracting
		\(\Lambda_n\) yields \(n-2\kappa+\binom \kappa2+1\ge0\), since \(n\ge2\kappa\).
	\end{proof}
	
	Now, we deal with the case that the first nonsimple restriction is nonexceptional.
	Put
	\[
	B_p:=\binom{p+1}{2}+p-1.
	\]
	The key local estimate is the following.
	
	\begin{lemma}\label{lem:nonexceptional-profile}
		Let \(H_{p_*+1}\to H_{p_*}\) be the first nonsimple restriction
		along the fixed flag, and assume that it is nonexceptional. Then, for
		every \(2\le p\le p_*\),
		\[
		\rho_p\ge B_p.
		\]
	\end{lemma}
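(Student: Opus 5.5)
The plan is to extract from the first nonsimple step a large block inside \(H_{p_*}\), transport that block down the flag, and then add \(p-1\) further dimensions using \cref{prop:quotient-divisibility}. Since \(K_{p_*}^2\ne0\), the vanishing order \(\nu:=\nu_{p_*}\) is at least \(2\). The top residual space \(Q:=Q_{p_*}^{\nu}\) is therefore nonzero, and \cref{thm:moving-residual-space} gives \(\Sym^{\nu}(L_{p_*}^*)Q\subset H_{p_*}\). Nonexceptionality means that either \(\nu\ge3\), or \(\nu=2\) and \(Q\ne\C\cdot1\). In the latter case \(Q\) contains a nonconstant \(q\), because a subspace of dimension at least two cannot consist of constants. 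So in all cases \(H_{p_*}\) contains \(q\,\Sym^{\nu}(L_{p_*}^*)\) for some nonzero \(q\), where either \(\nu\ge3\), or \(\nu=2\) and \(q\) is nonconstant.

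Next I would pass to \(2\le p\le p_*\) by restricting to \(L_p\). The restriction \(q|_{L_p}\) is nonzero, and if \(q\) is nonconstant it stays nonconstant. I would justify this by noting that the top-degree homogeneous part of \(q\) does not vanish identically on a general \(p\)-plane, and by shrinking the admissible open set if necessary. Restriction maps \(\Sym^{\nu}(L_{p_*}^*)\) onto \(\Sym^{\nu}(L_p^*)\), so \(q|_{L_p}\Sym^{\nu}(L_p^*)\subset H_p\). Multiplication by \(q|_{L_p}\) is injective, so \(\rho_p\ge\binom{p+\nu-1}{\nu}\). If \(\nu\ge3\), this gives \(\rho_p\ge\binom{p+2}{3}\). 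One checks \(\binom{p+2}{3}\ge\binom{p+1}{2}+p-1\) for all \(p\ge2\): the difference is \(\binom{p+1}{3}-p+1\ge0\), with equality at \(p=2\). So the case \(\nu\ge3\) is finished.

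The remaining case is \(\nu=2\) with \(\bar q:=q|_{L_p}\) nonconstant. Put \(R=\Sym(L_p^*)\). The subspace \(\bar q\,\Sym^2(L_p^*)\subset H_p\), of dimension \(\binom{p+1}{2}\), lies in the kernel of \(\pi:R\to R/(\bar q)\). Hence \(\rho_p\ge\binom{p+1}{2}+\dim\pi(H_p)\), and it suffices to show \(\dim\pi(H_p)\ge p-1\) via \cref{prop:quotient-divisibility}.

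For a general linear form \(u\in L_p^*\), write \(u=u_\xi=L(\cdot,\xi)|_{L_p}\); every form arises this way since the pairing is nondegenerate. The polarized identity \eqref{eq:polarization} gives \(u_\xi A(\cdot,\xi)|_{L_p}\in H_p\). Its image in \(R/(\bar q)\) lies in \(\pi(H_p)\cap u_\xi\,(R/(\bar q))\), so it remains to show this image is nonzero for general \(\xi\). Suppose instead that \(\bar q\) divides \(u_\xi A(\cdot,\xi)|_{L_p}\) for general \(\xi\). A general \(u_\xi\) is coprime to the fixed \(\bar q\); at most finitely many linear factors of \(\bar q\) must be avoided. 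Then every irreducible factor of \(\bar q\) divides \(A(\cdot,\xi)|_{L_p}\) for general \(\xi\), hence for all \(\xi\) by closedness. This contradicts the primitivity of \(V_{A,p}\), which holds because the flag is admissible for \(V_A\). Applying \cref{prop:quotient-divisibility} with \(U=\pi(H_p)\) then gives \(\dim\pi(H_p)\ge p-1\), hence \(\rho_p\ge B_p\).

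I expect the main obstacle to be the genericity bookkeeping in this last step. The point is to ensure that the restricted \(\bar q\) stays nonconstant, and that ``general \(u\)'' in \cref{prop:quotient-divisibility} is compatible with the specific family \(u_\xi\) and with avoiding linear factors of \(\bar q\). I would handle this by intersecting finitely many nonempty Zariski open conditions on \(\xi\) and on the flag, as in \cref{thm:admissible-flag}.
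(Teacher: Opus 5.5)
Your proposal is correct and follows essentially the same route as the paper. You extract the block $q\,\Sym^{\nu}(L_{p_*}^*)\subset H_{p_*}$, count $\binom{p+\nu-1}{\nu}\ge\binom{p+2}{3}\ge B_p$ when $\nu\ge3$, and for $\nu=2$ pass to the quotient by $q$ and feed the standard elements $u_\xi A(\cdot,\xi)$ into \cref{prop:quotient-divisibility}.

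The only difference is in the genericity bookkeeping you flagged. Because $q$ depends on the upper part of the flag, ``shrinking the admissible open set'' has to mean varying only $L_{p_*-1}\supset\cdots\supset L_2$ with $L_{p_*}$ held fixed. The paper avoids this altogether: it restricts to a general $p$-plane $P\subset L_{p_*}$, on which $q|_P$ is automatically nonconstant and $H|_P$, $V_A|_P$ are primitive, and then uses $\rho_p\ge\dim H|_P$, which holds since $\rho_p$ is the maximal restriction dimension on $\operatorname{Gr}(p,\C^n)$.
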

	
	\begin{proof}
		Because the restriction \(H_{p_*+1}\to H_{p_*}\) is 
		nonexceptional, by definition, we have    either \(\nu_{p_*}\ge3\), or \(\nu_{p_*}=2\) and there is a nonconstant \(q\)  in  \(  Q_{p_*}^{\nu_{p_*}}\).
		By \cref{thm:moving-residual-space},
		\[
		q\Sym^{\nu_{p_*}}(L_{p_*}^*)\subset H_{p_*}.
		\]
		
		Fix \(2\le p\le p_*\), and choose a general \(p\)-plane
		\(P\in \operatorname{Gr}(p,L_{p_*})\). Put \(H_P:=H|_P\),
		\(V_{A,P}:=V_A|_P\), and
		\(q_P:=q|_P\). The plane \(P\) can also be chosen so that both \(H_P\) and \(V_{A,P}\)
		are primitive, and 
		\(q_P\ne0\),   so that \(q_P\) is nonconstant when \(\nu_{p_*}=2\).
		Moreover, \(\dim H_P\le \rho_p\).  Restricting the preceding block gives
		\[
		q_P\Sym^{\nu_{p_*}}(P^*)\subset H_P.
		\]
		If \(\nu_{p_*}\ge3\), multiplication by \(q_P\) is injective and hence
		\[
		\rho_p\ge\binom{p+\nu_{p_*}-1}{\nu_{p_*}}\ge\binom{p+2}{3}\ge B_p,
		\]
		where the last inequality follows from
		\(\binom{p+2}{3}-B_p=(p-1)(p-2)(p+3)/6\).
		
		It remains to treat \(\nu_{p_*}=2\). Let
		\(\varpi_P:\Sym(P^*)\to\Sym(P^*)/(q_P)\) be the quotient map and
		put \(\overline H_P:=\varpi_P(H_P)\). The kernel of
		\(\varpi_P|_{H_P}\) contains \(q_P\Sym^2(P^*)\), and hence has
		dimension at least \(\binom{p+1}{2}\).
		
		We claim that
		\(\overline H_P\cap u(\Sym(P^*)/(q_P))\ne0\) for a general
		\(u\in P^*\). The nondegeneracy of \(L\) makes the map
		\(\xi\mapsto L(\,\cdot\,,\xi)|_P\) surjective onto \(P^*\). We may
		therefore write \(u=L(\,\cdot\,,\xi)|_P\) for a general
		\(\xi\in\C^n\), with \(u\) relatively prime to \(q_P\). The standard
		element \(uA(\,\cdot\,,\xi)|_P\) belongs to
		\(H_P\cap u\Sym(P^*)\), and its image modulo \(q_P\) is nonzero for
		general \(\xi\). Otherwise, coprimality of \(u\) and \(q_P\) would
		imply that \(q_P\) divides \(A(\,\cdot\,,\xi)|_P\) for general
		\(\xi\), hence for every \(\xi\) because divisibility by the fixed
		polynomial \(q_P\) is a closed linear condition. This would make
		\(q_P\) a common factor of \(V_{A,P}\), contrary to primitivity. Thus
		\cref{prop:quotient-divisibility} gives
		\(\dim\overline H_P\ge p-1\), and rank--nullity yields
		\[
		\rho_p\ge\dim H_P\ge\binom{p+1}{2}+p-1=B_p.
		\]
	\end{proof}
	
	\begin{proposition}\label{prop:nonexceptional}
		Assume that \(n=6\) or \(n\ge8\), and that the first nonsimple
		restriction along the fixed flag is nonexceptional. Then
		\(\rho\ge\Lambda_n\).
	\end{proposition}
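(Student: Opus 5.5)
We combine the two estimates already available: \cref{lem:nonexceptional-profile} below the first nonsimple step, and \cref{lem:triangular-staircase} above it. We then conclude with the orthogonal splitting \cref{lem:orthogonal-central-split} at $p=\kappa$, exactly as in \cref{prop:all-simple}.

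Let $p_*$ be the target dimension of the first nonsimple restriction. Every restriction $H_{t+1}\to H_t$ with $p_*+1\le t\le n-1$ is simple, so \cref{lem:triangular-staircase} applies with $p=p_*$. Since $n\ge 2\kappa$, it gives
\[
\delta_t\ge\min\{n-t,\kappa+1\}\quad\text{for } p_*\le t\le n-1.
\]
In particular $\delta_t\ge\kappa+1$ whenever $t\le n-\kappa-1$, and $\rho_p$ is nondecreasing in $p$. On the other side, \cref{lem:nonexceptional-profile} gives $\rho_p\ge B_p$ for $2\le p\le p_*$. We record that $B_{p+1}-B_p=p+2$. We then split into three cases according to the position of $p_*$ relative to $\kappa$ and $n-\kappa$, and in each case bound $\rho_\kappa+\rho_{n-\kappa}$ from below.

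\emph{Case $p_*\ge n-\kappa$.} Here $\rho_\kappa\ge B_\kappa$ and $\rho_{n-\kappa}\ge B_{n-\kappa}$. We have $B_\kappa=\binom{\kappa+1}2+\kappa-1$, and $B_{n-\kappa}$ is quadratic in $m=n-\kappa\ge\kappa$. Comparing with $\Lambda_n=n\kappa-\binom\kappa2-1$ is then an elementary inequality; for instance, $n=6$ gives $16\ge14$. We would verify it by viewing the difference as a convex function of $m$ on $m\ge\kappa$ and checking the endpoint $m=\kappa$, using $n>\binom\kappa2$.

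\emph{Case $\kappa\le p_*<n-\kappa$.} Here $\rho_\kappa\ge B_\kappa$ and
\[
\rho_{n-\kappa}\ge B_{p_*}+(n-\kappa-p_*)(\kappa+1).
\]
Increasing $p_*$ by one raises the right side by $(p_*+2)-(\kappa+1)>0$, so the worst case is $p_*=\kappa$. Then
\[
\rho\ge 2B_\kappa+(n-2\kappa)(\kappa+1),
\]
and subtracting $\Lambda_n$ leaves $n-\binom\kappa2-1\ge0$.

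\emph{Case $2\le p_*<\kappa$.} Here
\[
\rho_\kappa\ge B_{p_*}+(\kappa-p_*)(\kappa+1),\qquad \rho_{n-\kappa}\ge\rho_\kappa+(n-2\kappa)(\kappa+1).
\]
Now the increment $p_*+2\le\kappa+1$, so the worst case is $p_*=2$, where $B_2=4$. This gives $\rho_\kappa\ge\kappa^2-\kappa+2$. Then
\[
\rho-\Lambda_n\ge n-4\kappa+5+\binom\kappa2\ge(\kappa-2)(\kappa-3)\ge0,
\]
using $n\ge\binom\kappa2+1$.

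The main obstacle is not conceptual but bookkeeping. We must make sure that \cref{lem:triangular-staircase} really applies with $p=p_*$, which requires only simplicity strictly above $p_*$ and no condition on the step $H_{p_*+1}\to H_{p_*}$ itself. We must also carefully verify the first case's inequality, $B_\kappa+B_{n-\kappa}\ge\Lambda_n$, uniformly for $n=6$ and $n\ge8$, since there the staircase gives nothing and the bound rests entirely on the quadratic growth of $B_p$.
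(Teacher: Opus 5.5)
\textbf{Error in the third case's minimization.} Your architecture is the paper's: split at $p=\kappa$ via \cref{lem:orthogonal-central-split}, use $\rho_p\ge B_p$ below $p_*$ and $\delta_t\ge\kappa+1$ above it, and run three cases on the position of $p_*$. Your second case is exactly right. The minimization in your third case, however, runs in the wrong direction. For $f(p_*)=B_{p_*}+(\kappa-p_*)(\kappa+1)$ one has $f(p_*+1)-f(p_*)=(p_*+2)-(\kappa+1)\le 0$ for $p_*\le\kappa-1$. Moving $p_*$ up replaces a staircase loss $\kappa+1$ by a $B$-increment $p_*+2\le\kappa+1$, so the bound can only drop.

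Hence $f$ is minimized at $p_*=\kappa-1$, not at $p_*=2$. There $f(\kappa-1)=B_{\kappa-1}+\kappa+1=B_\kappa$, which is $(\kappa-2)(\kappa-3)/2$ smaller than your $\kappa^2-\kappa+2$. For example, with $n=8$ and $p_*=3$ you only get $\rho_4\ge 13$, not $14$. So your claimed $\rho-\Lambda_n\ge(\kappa-2)(\kappa-3)$ is not justified for $\kappa\ge4$.

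The fix is to evaluate at $p_*=\kappa-1$. This gives $\rho\ge 2B_\kappa+(n-2\kappa)(\kappa+1)$, the same value as your second case at $p_*=\kappa$. Moreover $2B_\kappa+(n-2\kappa)(\kappa+1)-\Lambda_n=n-\binom\kappa2-1\ge 0$, which is how the paper concludes. Note that this margin vanishes at $n=\binom\kappa2+1$, so the correct minimization is not optional.

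\textbf{The first case is also mis-sketched.} Checking the endpoint $m=\kappa$ (i.e.\ $n=2\kappa$) fails for $\kappa\ge5$. In that range $n\ge\binom\kappa2+1>2\kappa$, so $m=\kappa$ never occurs. Worse, at $m=\kappa$ the quantity $2B_\kappa-\bigl(2\kappa^2-\binom\kappa2-1\bigr)=2\kappa-1-\binom\kappa2$ is negative. Your convexity argument would need the true smallest admissible value $m=\max\{\kappa,\binom\kappa2+1-\kappa\}$.

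The paper's route is simpler. For $p\ge\kappa$ one has $B_{p+1}-B_p=p+2>\kappa+1$, so $B_{n-\kappa}\ge B_\kappa+(n-2\kappa)(\kappa+1)$. Thus the first case is the second-case expression evaluated at $p_*=n-\kappa$, and it is dominated by the same minimum $2B_\kappa+(n-2\kappa)(\kappa+1)$. With these two corrections your argument coincides with the paper's.
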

	
	\begin{proof}
		Let \(p_*\) be the target dimension of the first nonsimple
		restriction and put
		\[
		\mathcal R:=\rho_\kappa+\rho_{n-\kappa}.
		\]
		By \cref{lem:orthogonal-central-split}, it is enough to prove
		\(\mathcal R\ge\Lambda_n\). Recall that, for \(2\le p\le q\le n\),
		\[
		\rho_q=\rho_p+\sum_{i=p}^{q-1}\delta_i.
		\]
		We distinguish the position of \(p_*\) relative to
		\(\kappa\) and \(n-\kappa\).
		
		If \(n-\kappa\le p_*\), both dimensions \(\kappa\) and
		\(n-\kappa\) lie at or below the first nonsimple step. Hence
		\cref{lem:nonexceptional-profile} gives directly
		\(
		\rho_\kappa\ge B_\kappa,\
		\rho_{n-\kappa}\ge B_{n-\kappa},
		\)
		and therefore
		\(
		\mathcal R\ge B_\kappa+B_{n-\kappa}.
		\)
		
		Suppose next that
		\(\kappa\le p_*<n-\kappa\). Then
		\cref{lem:nonexceptional-profile} gives
		\(
		\rho_\kappa\ge B_\kappa,
		\
		\rho_{p_*}\ge B_{p_*}.
		\)
		In this case, the restriction steps from $H_{n-\kappa}$ to $H_{p_*+1}$ are all simple restrictions, \cref{lem:triangular-staircase} gives
		\(\delta_i\ge\kappa+1\) for $p_*\leq i\leq n-k-1$. Thus
		\[
		\rho_{n-\kappa}
		=
		\rho_{p_*}+\sum_{i=p_*}^{n-\kappa-1}\delta_i
		\ge
		B_{p_*}+(n-\kappa-p_*)(\kappa+1),
		\]
		so
		\[
		\mathcal R\ge
		B_\kappa+B_{p_*}
		+(n-\kappa-p_*)(\kappa+1).
		\]
		
		Finally, suppose that \(2\le p_*<\kappa\). Now both
		\(\kappa\) and \(n-\kappa\) lie above the first nonsimple target.
		Starting from
		\(\rho_{p_*}\ge B_{p_*}\), the same  estimate as above gives
		\[
		\rho_\kappa
		\ge
		B_{p_*}+(\kappa-p_*)(\kappa+1)
		\]
		and
		\[
		\rho_{n-\kappa}
		\ge
		B_{p_*}+(n-\kappa-p_*)(\kappa+1).
		\]
		Hence
		\[
		\mathcal R\ge
		2B_{p_*}+(n-2p_*)(\kappa+1).
		\]
		
		We have therefore obtained
		\begin{equation}\label{eq:NS-two-range-count}
			\mathcal R\ge
			\begin{cases}
				B_\kappa+B_{n-\kappa},
				& n-\kappa\le p_*\le n-1,\\[3pt]
				B_\kappa+B_{p_*}
				+(n-\kappa-p_*)(\kappa+1),
				& \kappa\le p_*<n-\kappa,\\[3pt]
				2B_{p_*}+(n-2p_*)(\kappa+1),
				& 2\le p_*<\kappa.
			\end{cases}
		\end{equation}
		
		It remains to minimize these three expressions. Since
		\(
		B_{p+1}-B_p=p+2,
		\)
		the increment of the middle expression when \(p_*\) is replaced
		by \(p_*+1\) is
		\[
		(B_{p_*+1}-B_{p_*})-(\kappa+1)
		=p_*-\kappa+1.
		\]
		This is positive for \(p_*\ge\kappa\), so the middle branch is
		increasing and attains its minimum at \(p_*=\kappa\).
		
		For the third branch, the corresponding increment is
		\[
		2(B_{p_*+1}-B_{p_*})-2(\kappa+1)
		=
		2(p_*-\kappa+1).
		\]
		It is nonpositive for \(p_*\le\kappa-1\), so the third branch is
		nonincreasing and attains its minimum at \(p_*=\kappa-1\).
		Moreover,
		\[
		2B_{\kappa-1}
		+(n-2\kappa+2)(\kappa+1)
		=
		2B_\kappa+(n-2\kappa)(\kappa+1),
		\]
		Thus the two
		adjacent branches have the same minimum. The first branch equals
		the continuation of the middle branch at \(p_*=n-\kappa\), and
		hence cannot give a smaller value. Consequently,
		\[
		\mathcal R
		\ge
		2B_\kappa+(n-2\kappa)(\kappa+1).
		\]
		
		Finally, using the definitions of \(B_\kappa\) and \(\Lambda_n\),
		\[
		2B_\kappa+(n-2\kappa)(\kappa+1)-\Lambda_n
		=
		n-\binom{\kappa}{2}-1.
		\]
		By the defining inequality
		\(\binom{\kappa}{2}<n\), the right hand side is nonnegative.
		Therefore
		\(\mathcal R\ge\Lambda_n\), and
		\cref{lem:orthogonal-central-split} gives
		\(\rho\ge\Lambda_n\).
	\end{proof}
	
	It remains to treat the case that the first nonsimple restriction is exceptional.
	
	\begin{proposition}
		\label{prop:exceptional}
		Assume that \(n=6\) or \(n\ge8\). Suppose that the first nonsimple
		restriction along the fixed flag is exceptional. Then
		\(\rho\ge\Lambda_n\).
	\end{proposition}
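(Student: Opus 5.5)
The plan is to mirror \cref{prop:nonexceptional}: derive a lower bound profile for \(\rho_p\), \(2\le p\le p_*\), from the exceptional structure, and then combine it with \cref{lem:triangular-staircase} (losses \(\ge\kappa+1\) above \(p_*\)) and \cref{lem:orthogonal-central-split} at \(p=\kappa\). In the exceptional case \(K_{p_*}^3=0\) and \(Q_{p_*}^2=\C\cdot 1\). By \cref{thm:moving-residual-space}, this gives \(\Sym^2(L_{p_*}^*)\subset H_{p_*}\). Restricting to a general \(p\)-plane \(P\subset L_{p_*}\), we get \(\Sym^2(P^*)\subset H_P\), so \(\rho_p\ge\binom{p+1}{2}\). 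This is short of \(B_p\) by exactly \(p-1\). The task is to recover those \(p-1\) extra dimensions, or enough of them, from elements of \(H_P\) not lying in \(\Sym^2(P^*)\).

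To find them, quotient \(H_P\) by the homogeneous quadrics, i.e.\ consider the image of \(H_P\) in \(\Sym(P^*)/\Sym^2(P^*)\); equivalently, look at the components of degree \(\neq 2\). The standard elements \(u_\xi A(\cdot,\xi)|_P\) lie in \(H_P\). By \cref{remark:degree}, \(V_A\not\subset S_1\), so for general \(\xi\) the polynomial \(A(\cdot,\xi)|_P\) has a component of degree \(\ge2\), and hence the standard element has a component of degree \(\ge3\). The map \(\xi\mapsto\) (degree \(\ge3\) part of \(u_\xi A(\cdot,\xi)|_P\)) then produces a space \(Y\subset H_P/(H_P\cap \Sym^2)\) such that \(Y\cap u\Sym(P^*)\ne0\) for general \(u\), modulo lower-degree corrections. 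I would apply the argument of \cref{lem:filtered-macaulay} (initial spaces) together with a degree-graded version of \cref{prop:quotient-divisibility}, or more simply \cref{lem:two-variable} in each \(2\)-plane direction, to show that the degree \(\ge3\) parts span at least \(p-1\) dimensions. Heuristically, if they spanned fewer, there would be a common factor for \(V_{A,P}\) in top degree, contradicting primitivity of \(V_{A,P}\) or forcing \(\rank A\le\kappa\), which \cref{prop:triangular-rank} rules out. This would again give \(\rho_p\ge B_p\).

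If this works, the minimization in \eqref{eq:NS-two-range-count} applies verbatim and yields \(\rho\ge\Lambda_n\). As a fallback, if only \(\rho_p\ge\binom{p+1}{2}+c\) is available, I would redo the three-branch minimization. Replacing \(B_p\) by \(\binom{p+1}{2}\) lowers the final bound by \(2(\kappa-1)\), leaving \(n-\binom{\kappa}{2}-1-2(\kappa-1)\). This is not always nonnegative, so the gap must be closed using the extra slack from \(\delta_{p_*}\) itself: the nonsimple step has kernel containing \(\ell^2\cdot 1\) plus the \(K^1\)-part, and the top losses exceed \(\kappa+1\) when \(n\) is large relative to \(2\kappa\).

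The main obstacle is the second step: guaranteeing \(p-1\) independent directions beyond \(\Sym^2(P^*)\) in the exceptional case, where the quotient-by-\(q\) trick of \cref{lem:nonexceptional-profile} degenerates because \(q=1\). I would first try passing to the quotient by the ideal generated by \(\Sym^2\) truncated in degree, and proving an analogue of \cref{prop:moving-mod-q} there.
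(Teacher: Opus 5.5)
\textbf{There is a genuine gap.} The decisive step of your plan is exactly the one you leave open. You need to show that $H_P$ has at least $p-1$ dimensions beyond $\Sym^2(P^*)$, so that $\rho_p\ge B_p$ for $2\le p\le p_*$. What you offer there is a list of tools and a heuristic, not an argument, and the tools do not apply as stated. \cref{prop:quotient-divisibility} needs $q$ nonconstant; for $q=1$ the ring $R/(q)$ is zero, as you note. \cref{lem:two-variable} produces a single extra dimension and gives no way to assemble $p-1$ of them. The claim that fewer dimensions would force a common factor of $V_{A,P}$ or $\rank A\le\kappa$ is not substantiated. Your fallback bound $\rho_p\ge\binom{p+1}{2}$ does not close the count, as you compute yourself. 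So, as written, the exceptional case is not proved.

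Your idea can be completed with one observation; what follows is a repair sketch that is not in the paper. Since $\Sym^2(P^*)\subset H_P$, quadratic parts can be subtracted inside $H_P$. Hence $H_P=\Sym^2(P^*)\oplus Y$ with $Y:=H_P\cap\bigoplus_{d\ne2}\Sym^d(P^*)$. Multiplication by the linear form $u_\xi$ shifts degrees by one. So, writing $a_\xi:=A(\cdot,\xi)|_P$, the $Y$-component of the standard element $u_\xi a_\xi$ is exactly $u_\xi\bigl(a_\xi-(a_\xi)_1\bigr)$. There are no lower-degree corrections. This element is nonzero for general $\xi$ by \cref{remark:degree}, once the flag and $P$ are chosen so that some nonlinear homogeneous component of an element of $V_A$ survives restriction. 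A nonzero polynomial has only finitely many linear factors. Therefore the incidence $\{([u],[y])\in\PP(P^*)\times\PP(Y):u\mid y\}$ dominates $\PP(P^*)$ and has finite fibres over $\PP(Y)$. Hence $\dim Y\ge p$ and $\rho_p\ge\binom{p+1}{2}+p=B_p+1$, after which the minimization \eqref{eq:NS-two-range-count} applies verbatim.

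The paper proceeds differently. It works one level up with the family $w_{[\ell]}=\ell^2q_{[\ell]}\in H_{p_*+1}$, whose quadratic parts $c([\ell])\ell^2$ give $\Sym^2(L_{p_*+1}^*)\subset H^{[2]}|_{L_{p_*+1}}$. The bidegree-$(2,2)$ identity gives $H^{[2]}=EV_{A_{1,1}}$, which forces $\dim V_{A_{1,1}}\ge p_*+1$. Then \cref{lem:filtered-macaulay} gives the global bound $\rho\ge n(p_*+1)-\binom{p_*+1}{2}$, which settles $p_*\ge\kappa-1$. For $p_*\le\kappa-2$ it gets $\rho_{p_*+1}\ge B_{p_*+1}$ from the simple step $H_{p_*+2}\to H_{p_*+1}$ and reruns the minimization. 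The repaired version of your route treats exceptional and nonexceptional steps uniformly and avoids the bidegree-$(2,2)$ identity. The paper's route instead yields a bound on $\rho$ itself in terms of $p_*$, which it reuses for $n=3,4,5,7$ (\cref{lem:endpoint-exceptional}).
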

	
	\begin{proof}
		Let \(p_*\) be the target dimension of the first nonsimple
		restriction, so that this step is
		\(H_{p_*+1}\to H_{p_*}\), and put \(\ell_0:=\ell_{p_*+1}\). By
		exceptionality, choose \(0\ne w_0\in K_{p_*}^2\) whose residual class
		is a nonzero constant. By \cref{cor:local-frame}, after shrinking to a
		Zariski open neighborhood of \([\ell_0]\), choose the regular
		representatives \(\ell\) supplied there and extend \(w_0\) to a
		regularly varying family
		\[
		w_{[\ell]}=\ell^2 q_{[\ell]}\in H_{p_*+1}.
		\]
		At the base point, the choice of \(w_0\) gives
		\[
		q_{[\ell_0]}=c_0+\ell_0 b_0,
		\qquad c_0\ne0.
		\]
		The constant coefficient \(c([\ell]):=q_{[\ell]}(0)\) varies regularly
		and satisfies \(c([\ell_0])=c_0\ne0\). After shrinking once more, it is
		nonzero throughout the neighborhood. Consequently, the quadratic
		homogeneous component of \(w_{[\ell]}\) is
		\(c([\ell])\ell^2\). As \([\ell]\) ranges over a nonempty
		open subset of \(\PP(L_{p_*+1}^*)\), these pure squares span
		\(\Sym^2(L_{p_*+1}^*)\). Let
		\[
		H^{[2]}:=\Span\{h_{\mu,2}:1\le\mu\le\rho\}
		\]
		be the image of \(H\) under projection onto its quadratic homogeneous
		component. Therefore
		\[
		\Sym^2(L_{p_*+1}^*)\subset H^{[2]}|_{L_{p_*+1}}.
		\]
		
		The bidegree-\((2,2)\) identity
		\eqref{eq:homogeneous-polarized-identity} shows that
		\(\|z\|^2A_{1,1}(z,\bar z)\ge0\). Hence
		\(A_{1,1}(z,\bar z)\ge0\) for \(z\ne0\), and continuity gives the same
		at the origin. Since \(A_{1,1}\) has bidegree \((1,1)\), its Hermitian
		coefficient matrix is positive semidefinite. Choose a minimal decomposition
		\[
		A_{1,1}=\sum_{\alpha=1}^{a_1}|v_\alpha|^2,
		\qquad
		V_{A_{1,1}}:=\Span\{v_1,\ldots,v_{a_1}\}\subset E.
		\]
		Then we have 
		\[
		L(z,\xi)A_{1,1}(z,\xi)
		=\sum_{j,\alpha}(z_jv_\alpha(z))
		(z_jv_\alpha)^*(\xi).
		\]
		Again applying \eqref{eq:homogeneous-polarized-identity}, it holds that 
		\[\sum_{j,\alpha}(z_jv_\alpha(z))
		(z_jv_\alpha)^*(\xi)=\sum_{\mu=1}^\rho h_{\mu,2}(z)h_{\mu,2}^*(\xi).\]
		From \cite[Chapter3, Proposition 3]{Dan93}, this imples that 
		\[
		H^{[2]}=EV_{A_{1,1}}.
		\]
		Recall that $E=S_1$ is the space of linear forms on $\mathbb C^n$.
		This identity also appears in a more general form in \cite{Dan05}.
		
		Put
		\(V_{A_{1,1},p_*+1}:=V_{A_{1,1}}|_{L_{p_*+1}}\). Restriction of
		the preceding inclusion gives
		\[
		\Sym^2(L_{p_*+1}^*)
		\subset L_{p_*+1}^*V_{A_{1,1},p_*+1}.
		\]
		This forces \(L_{p_*+1}^*=V_{A_{1,1},p_*+1}\). 
		Consequently, \(p_*+1 \le \dim V_{A_{1,1}}=a_1\le n\). By
		\cref{lem:filtered-macaulay},
		\[
		\dim(EV_{A_{1,1}})\ge na_1-\binom{a_1}{2}.
		\]
		Homogeneous projection cannot increase dimension, so
		\(\dim H^{[2]}\le\dim H=\rho\). Since
		\(x\mapsto nx-\binom x2\) is increasing for \(1\le x\le n\), we get
		\begin{equation}\label{eq:NS-quadratic-global}
			\rho\ge\dim H^{[2]}
			\ge na_1-\binom{a_1}{2}
			\ge n(p_*+1)-\binom{p_*+1}{2}.
		\end{equation}
		It is worth mentioning that the proof  of \eqref{eq:NS-quadratic-global} does not require the
		assumption that \(n\ge2\kappa\), and we shall use it again in the next
		section.
		
		If \(p_*\ge \kappa-1\), this already proves the desired estimate, since
		\eqref{eq:NS-quadratic-global} gives
		\(\rho\ge n\kappa-\binom \kappa2=\Lambda_n+1\).
		
		It remains to treat \(p_*\le \kappa-2\). Since \(n\ge2\kappa\), all restriction steps  from
		dimension \(p_*+1\) up to dimensions \(\kappa\) and \(n-\kappa\) are simple.  Hence \(\delta_i\ge\kappa+1\) for $p_*+1\leq i\leq \min\{\kappa-1,n-\kappa-1\}$. Thus
		\[
		\rho_\kappa\ge\rho_{p_*+1}+(\kappa-p_*-1)(\kappa+1),
		\]
		and
		\[
		\rho_{n-\kappa}\ge\rho_{p_*+1}+(n-\kappa-p_*-1)(\kappa+1).
		\]
		Adding these estimates gives
		\[
		\mathcal R=\rho_\kappa+\rho_{n-\kappa}
		\ge2\rho_{p_*+1}+(n-2p_*-2)(\kappa+1).
		\]
		It remains to estimate $\rho_{p_*+1}$.
		The preceding restriction
		\(H_{p_*+2}\to H_{p_*+1}\) is simple, thus 
		\(\dim Q_{p_*+1}=\delta_{p_*+1}\). Moreover,
		\cref{lem:triangular-staircase} gives
		\(\delta_{p_*+1}\ge \kappa+1\).  By
		\cref{thm:moving-residual-space},
		\(L_{p_*+1}^*Q_{p_*+1}\subset H_{p_*+1}\). Since
		\(p_*\le \kappa-2\), we have
		\(\delta_{p_*+1}\ge \kappa+1\ge p_*+3\). Choose a
		\((p_*+2)\)-dimensional subspace \(Q'\subset Q_{p_*+1}\). By
		\cref{lem:filtered-macaulay}, applied in the \(p_*+1\) variables of
		\(L_{p_*+1}\),
		\[
		\rho_{p_*+1}\ge\dim(L_{p_*+1}^*Q')
		\ge\gamma_{p_*+1}(p_*+2)=
		\binom{p_*+2}{2}+p_*=B_{p_*+1}.
		\]
		Thus
		\[
		\mathcal R\ge
		2B_{p_*+1}+(n-2p_*-2)(\kappa+1).
		\]
		For \(r\le \kappa-1\), the quantity
		\(2B_r+(n-2r)(\kappa+1)\) is nonincreasing as \(r\) increases, because its
		increment from \(r\) to \(r+1\) is \(2(r-\kappa+1)\). Since
		\(p_*+1\le \kappa-1\), we obtain
		\(\mathcal R\ge2B_\kappa+(n-2\kappa)(\kappa+1)\). Finally,
		\(2B_\kappa+(n-2\kappa)(\kappa+1)-\Lambda_n
		=n-\binom \kappa2-1\ge0\) by the definition of \(\kappa\). Therefore
		\(\rho\ge\mathcal R\ge\Lambda_n\) by
		\cref{lem:orthogonal-central-split}. The proof is thus complete.
	\end{proof}

	\section{Proof of \cref{conj: weak sos} for $n=3, 4, 5, 7$}\label{sect: small dim}
	
	In this section, we complete the proof of \cref{thm:main}. In view of \cref{thm: kappa big},	it remains to treat the   cases that $n=3,4,5,7$.  The required lower bounds are
	\(\rho\ge4,8,11,21\) for \(n=3,4,5,7\), respectively.
	
	We retain the fixed flag \eqref{eq:weak-sos-admissible-flag} and all the
	notation of Section~\ref{sec:restriction-dynamics}. Recall that, if
	\(H_{p+1}\to H_p\) is simple, then
	\[
	K_p^1=H_{p+1}\cap\ell_{p+1}\Sym(L_{p+1}^*),\qquad K_p^2=0,
	\]
	the residual space \(Q_p=Q_p^1\) has dimension \(\delta_p\), and
	\cref{thm:moving-residual-space} gives \(L_p^*Q_p\subset H_p\).
	
	We shall repeatedly use
	\cref{lem:endpoint-equal-loss,lem:nonexceptional-profile} and the orthogonal splitting \cref{lem:orthogonal-central-split}. Their proofs do not use $n \ge 2\kappa$. We also use the 
	following dimension estimate obtained in the exceptional branch argument.
	
	\begin{lemma}\label{lem:endpoint-exceptional}
		Suppose that the first nonsimple restriction of \(H\) along the fixed
		flag is \(H_{p_*+1}\to H_{p_*}\), and assume that it is exceptional.
		Then
		\[
		\rho\ge n(p_*+1)-\binom{p_*+1}{2}.
		\]
	\end{lemma}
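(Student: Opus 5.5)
This is exactly the estimate \eqref{eq:NS-quadratic-global} from the proof of \cref{prop:exceptional}. As remarked there, that part of the argument never used \(n\ge2\kappa\), so the plan is to rerun it verbatim under the present hypotheses.

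First, exploit exceptionality. Set \(\ell_0=\ell_{p_*+1}\) and pick \(0\ne w_0\in K_{p_*}^2\) whose residual class in \(Q_{p_*}^2=\C\cdot1\) is a nonzero constant. Since the flag is admissible, \(L_{p_*}\subset L_{p_*+1}\) is a general hyperplane for \(H_{p_*+1}\). Hence \cref{cor:local-frame} extends \(w_0\) to a regular family \(w_{[\ell]}=\ell^2q_{[\ell]}\in H_{p_*+1}\) on a Zariski open neighborhood of \([\ell_0]\).

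The constant term \(c([\ell])=q_{[\ell]}(0)\) is regular and nonzero at \([\ell_0]\), so it stays nonzero after shrinking the neighborhood. The quadratic homogeneous component of \(w_{[\ell]}\) is then \(c([\ell])\ell^2\). Pure squares of an open set of linear forms span \(\Sym^2(L_{p_*+1}^*)\), and therefore
\[
\Sym^2(L_{p_*+1}^*)\subset H^{[2]}|_{L_{p_*+1}},
\]
where \(H^{[2]}\) is the span of the quadratic components \(h_{\mu,2}\).

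Second, identify \(H^{[2]}\). The bidegree \((2,2)\) part of \eqref{eq:homogeneous-polarized-identity} gives \(\|z\|^2A_{1,1}(z,\bar z)\ge0\), so \(A_{1,1}\ge0\). Since \(A_{1,1}\) is of bidegree \((1,1)\), it is then a sum of squares \(\sum|v_\alpha|^2\) with \(V_{A_{1,1}}\subset E\) of dimension \(a_1\). Comparing the two sum-of-squares representations of \(L(z,\xi)A_{1,1}(z,\xi)\), one coming from the \(z_jv_\alpha\) and the other from the \(h_{\mu,2}\), the result from \cite{Dan93} gives \(H^{[2]}=EV_{A_{1,1}}\).

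Restricting to \(L_{p_*+1}\) yields
\[
\Sym^2(L_{p_*+1}^*)\subset L_{p_*+1}^*\,V_{A_{1,1}}|_{L_{p_*+1}}.
\]
This forces \(V_{A_{1,1}}|_{L_{p_*+1}}=L_{p_*+1}^*\), hence \(a_1\ge p_*+1\).

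Third, conclude. By \cref{lem:filtered-macaulay}, using \(a_1\le n\), we get \(\dim EV_{A_{1,1}}\ge na_1-\binom{a_1}{2}\). Taking homogeneous components does not increase dimension, so \(\rho\ge\dim H^{[2]}\). Since \(x\mapsto nx-\binom x2\) is increasing on \([1,n]\), this gives \(\rho\ge n(p_*+1)-\binom{p_*+1}{2}\).

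The main obstacle is the forcing step \(V_{A_{1,1}}|_{L_{p_*+1}}=L_{p_*+1}^*\). Write \(W\) for the restriction \(V_{A_{1,1}}|_{L_{p_*+1}}\), a space of linear forms in \(m=p_*+1\) variables. If \(W\) is a proper subspace, change coordinates so that \(W=\Span\{x_1,\ldots,x_k\}\) with \(k<m\). Then \(x_m^2\notin L^*W\), because every element of \(L^*W\) lies in the ideal \((x_1,\ldots,x_k)\). This gives the contradiction.
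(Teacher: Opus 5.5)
Your proposal is correct and follows the paper's own route: the paper proves this lemma by pointing to the derivation of \eqref{eq:NS-quadratic-global} in the proof of \cref{prop:exceptional}, and you reproduce that derivation step by step. The steps are the exceptional constant residual giving $\Sym^2(L_{p_*+1}^*)\subset H^{[2]}|_{L_{p_*+1}}$, then $H^{[2]}=EV_{A_{1,1}}$, then the Macaulay count. Your ideal-membership argument showing $V_{A_{1,1}}|_{L_{p_*+1}}=L_{p_*+1}^*$ correctly supplies a step that the paper only asserts.
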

	
	\begin{proof}
		This is exactly \eqref{eq:NS-quadratic-global}. Its derivation uses
		only the bidegree-\((2,2)\) identity and the filtered Macaulay estimate,
		not the  assumption \(n\ge2\kappa\).
	\end{proof}
	\addtocontents{toc}{\protect\setcounter{tocdepth}{-2}}
	\subsection{Dimension $3$.}
	
	\addtocontents{toc}{\protect\setcounter{tocdepth}{1}}
	Here the required bound is \(\rho\ge4\), while
	\cref{prop:triangular-rank} gives \(\rank A\ge3\). There is only one
	restriction, \(H_3\to H_2\). If it is nonsimple and nonexceptional,
	\cref{lem:nonexceptional-profile} gives \(\rho_2\ge B_2=4\). If it is
	exceptional, \cref{lem:endpoint-exceptional} with \(p_*=2\) gives
	\(\rho\ge6\). Thus it remains only to consider the simple case.
	
	Suppose for contradiction that \(\rho\le3\).  From \cref{lem:triangular-staircase}, we have
	\(\delta_2\ge1\), so \(\rho_2\le2\). A nonzero polynomial remains
	nonzero on a general two-plane, hence \(\rho_2\ge1\). If
	\(\rho_2=1\), the first part of \cref{lem:pullback}, applied to \(H\),
	gives \(\rho=1\), contradicting \(\delta_2\ge1\). Therefore
	\(\rho_2=2\), and then necessarily \(\rho=3\).
	
	Now \(H\) is primitive and its restriction to a general two-plane has
	dimension exactly two. Since \(\dim H=3>2\), the last part of
	\cref{lem:pullback} gives \(H\subset S_1\). The polarized identity then
	forces \(A\) to be constant, contradicting \(\rank A\ge3\).
	Hence \(\rho\ge4\). This completes the proof for dimension $3$.
	
	\addtocontents{toc}{\protect\setcounter{tocdepth}{-2}}
	\subsection{Dimension $4$.}
	\addtocontents{toc}{\protect\setcounter{tocdepth}{1}}
	
	Here the required bound is \(\rho\ge8\), and
	\cref{prop:triangular-rank} gives \(\rank A\ge4\). If the first
	nonsimple restriction is nonexceptional,
	\cref{lem:nonexceptional-profile} gives \(\rho_2\ge B_2=4\). The
	orthogonal decomposition \(2+2\) and
	\eqref{eq:orthogonal-central-split} then give \(\rho\ge2\rho_2\ge8\).
	If the first nonsimple restriction is exceptional, then
	\(p_*\in\{3,2\}\), and \cref{lem:endpoint-exceptional} gives
	\(\rho\ge10\) or \(\rho\ge9\). Thus a counterexample with
	\(\rho\le7\) would have both restrictions simple.
	
	For two simple restrictions, \cref{lem:triangular-staircase} implies \(\delta_3\ge1\) and
	\(\delta_2\ge\delta_3\). If \(\delta_2=1\), then
	\(\delta_3=\delta_2=1\), so \cref{lem:endpoint-equal-loss} with
	\(p=3\) would give \(\rank A\le1\), a contradiction. Hence
	\(\delta_2\ge2\). The residual space \(Q_2\) has dimension
	\(\delta_2\), and by \cref{thm:moving-residual-space} \(L_2^*Q_2\subset H_2\). By
	\cref{lem:two-variable}, \(\rho_2\ge\delta_2+1\ge3\). On the other
	hand, \(\rho\le7\) and the orthogonal \(2+2\) decomposition imply
	\(\rho_2\le3\). Consequently \(\rho_2=3\), \(\delta_2=2\), and
	\(H_2=L_2^*Q_2\).
	
	We need the following elementary lemma. 
	\begin{lemma}\label{lem:two-generator-equality}
		Let \(S=\C[z_1,\ldots,z_m]\), with \(m\ge2\), and let
		\(U=\Span\{f,g\}\subset S\) be two-dimensional. If
		\(\dim(S_1U)=2m-1\), then \(U=aL\), where \(a\) is a nonzero
		polynomial and \(L\subset S_1\) is two-dimensional.
	\end{lemma}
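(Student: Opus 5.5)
The plan is to read off a single linear syzygy from the dimension count and then use unique factorization in $S$.

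First, consider the multiplication map $S_1\otimes U\to S_1U$. Its source has dimension $2m$ and its image has dimension $2m-1$ by hypothesis. So its kernel $K_1(U)$ is one-dimensional, i.e.\ $k_U=1$. Pick a nonzero element of this kernel and write it as $\lambda\otimes f-\mu\otimes g$ with $\lambda,\mu\in S_1$, using the basis $f,g$ of $U$. This gives a relation
\[
\lambda f=\mu g,\qquad (\lambda,\mu)\neq(0,0).
\]

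Next, rule out degenerate syzygies.
\begin{itemize}
\item If $\mu=0$, then $\lambda f=0$ with $f\neq0$ (since $\dim U=2$), so $\lambda=0$, a contradiction. Symmetrically $\lambda\neq0$, so both are nonzero.
\item If $\lambda$ and $\mu$ were proportional, say $\mu=c\lambda$, then cancelling $\lambda$ in the domain $S$ gives $f=cg$. This contradicts $\dim U=2$.
\end{itemize}
Hence $\lambda,\mu$ are linearly independent linear forms. In particular they are non-associate irreducible elements of the UFD $S$, and therefore coprime.

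From $\lambda f=\mu g$ and $\gcd(\lambda,\mu)=1$ we get $\mu\mid f$. Write $f=a\mu$ with $a\in S$, and $a\neq0$ since $f\neq0$. Substituting gives $\lambda a\mu=\mu g$, hence $g=a\lambda$. Therefore
\[
U=\Span\{a\mu,a\lambda\}=a\,L,\qquad L:=\Span\{\lambda,\mu\}\subset S_1,
\]
and $L$ is two-dimensional by the independence shown above.

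The only step needing care is the one excluding degenerate syzygies ($\mu=0$, or $\lambda\parallel\mu$). Both are handled directly by $\dim U=2$ together with $S$ being an integral domain. The rest is unique factorization.
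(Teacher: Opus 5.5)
Your proof is correct and takes essentially the same route as the paper's. You extract a nonzero linear syzygy from the one-dimensional kernel of $S_1\otimes U\to S_1U$, use $\dim U=2$ to show the two linear coefficients are nonzero and non-proportional (hence coprime), and conclude by unique factorization that $f$ and $g$ share a common cofactor $a$; your treatment of the degenerate syzygies is slightly more explicit than the paper's.
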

	
	\begin{proof}
		The kernel of the multiplication map \(S_1\otimes U\to S_1U\) is
		one-dimensional, so there is a nonzero linear syzygy
		\(\alpha f+\beta g=0\). Neither coefficient can vanish, and
		\(\alpha\) and \(\beta\) cannot be proportional because \(f\) and
		\(g\) are linearly independent. Thus the two linear forms are
		relatively prime. From \(\alpha f=-\beta g\) and Euclid's lemma,
		\(\beta\) divides \(f\) and \(\alpha\) divides \(g\). Writing
		\(f=a\beta\) and \(g=-a'\alpha\) and substituting back gives
		\(a=a'\). Hence \(U=a\Span\{\beta,-\alpha\}\).
	\end{proof}
	
	Applying the lemma to \(Q_2\), with \(m=2\), gives \(Q_2=aL_2^*\), and
	hence \(H_2=a\Sym^2(L_2^*)\). The restriction \(H_2\) is primitive, so
	\(a\) is constant and \(H_2\subset\Sym^2(L_2^*)\). By  \cref{thm:admissible-flag}, a general two-plane can be extended to an admissible flag. The same argument apply to those flag , so
	\(H|_{L_2}\subset\Sym^2(L_2^*)\) for a general two-plane \(L_2\). If
	some \(f\in H\) had a nonzero homogeneous component of degree different
	from two, that component would remain nonzero on a general two-plane, a
	contradiction. Thus \(H\subset S_2\). The polarized identity now forces
	\(A\) to have bidegree \((1,1)\). Since \(A\) is pointwise nonnegative,
	the asssociated Hermitian matrix of $A$ is positive semidefinite, contrary to
	\(A\notin\mathrm{SOS}_4\). Therefore \(\rho\ge8\). This completes the proof for dimension $4$.
	
	\addtocontents{toc}{\protect\setcounter{tocdepth}{-2}}
	\subsection{Dimension $5$.}
	\addtocontents{toc}{\protect\setcounter{tocdepth}{1}}
	Here the required bound is \(\rho\ge11\), and
	\cref{prop:triangular-rank} gives \(\rank A\ge4\). Suppose first that
	all three restrictions are simple. \cref{lem:triangular-staircase} gives
	\(\delta_4\le\delta_3\le\delta_2\) and \(\delta_4\ge1\). Equality
	\(\delta_4=\delta_3=1\) is excluded by
	\cref{lem:endpoint-equal-loss}, since it would give
	\(\rank A\le1\). Thus \(\delta_3\ge2\). Likewise, if
	\(\delta_2=2\), then \(\delta_3=\delta_2=2\), and the same lemma, now
	with \(p=3\), would give \(\rank A\le2\). Hence
	\(\delta_2\ge3\).
	
	The bottom simple restriction therefore has a residual space \(Q_2\) of
	dimension at least three, with \(L_2^*Q_2\subset H_2\). By
	\cref{lem:two-variable}, \(\rho_2\ge4\), and then
	\(\rho_3=\rho_2+\delta_2\ge7\). Applying
	\eqref{eq:orthogonal-central-split} to an orthogonal decomposition
	\(2+3\) gives \(\rho\ge\rho_2+\rho_3\ge11\).
	
	Now suppose the first nonsimple restriction is nonexceptional. If
	\(p_*\ge3\), then \cref{lem:nonexceptional-profile} gives
	\(\rho_2\ge B_2=4\) and \(\rho_3\ge B_3=8\), hence
	\(\rho\ge\rho_2+\rho_3\ge12\). It remains to consider \(p_*=2\).
	Then  \cref{lem:nonexceptional-profile} gives \(\rho_2\ge4\). The restrictions to
	dimensions four and three are simple. Then $1\leq \delta_4\leq \delta_3\leq \delta_2$. From \(\delta_4\ge1\) and \cref{lem:endpoint-equal-loss}, \(\delta_3\ge2\), otherwise, rank $A\leq 1$.  If \(\delta_2=2\), then
	\(\delta_3=\delta_2=2\), and
	\cref{lem:endpoint-equal-loss} with \(p=3\) would give
	\(\rank A\le2\). Thus \(\delta_2\ge3\), whence
	\(\rho_3=\rho_2+\delta_2\ge7\) and again
	\(\rho\ge\rho_2+\rho_3\ge11\).
	
	Finally, if the first nonsimple restriction is exceptional, then
	\(p_*\in\{4,3,2\}\). The smallest value supplied by
	\cref{lem:endpoint-exceptional} occurs at \(p_*=2\) and is
	\(5\cdot3-\binom32=12\). Thus the exceptional branch also satisfies
	the desired bound. This completes the proof for dimension $5$.
	
	\addtocontents{toc}{\protect\setcounter{tocdepth}{-2}}
	\subsection{Dimension $7$.}
	\addtocontents{toc}{\protect\setcounter{tocdepth}{1}}
	
	Here the required bound is \(\rho\ge21\), and
	\cref{prop:triangular-rank} gives \(\rank A\ge5\).
	
	We first record a consequence that will eliminate the only borderline
	configuration. Suppose that the restrictions \(H_7\to H_6 \to H_5 \to H_4\) are simple. Starting from
	\(\delta_6\ge1\), 
	\cref{lem:endpoint-equal-loss} and  \cref{lem:triangular-staircase}  successively exclude
	\(\delta_6=\delta_5=1\), \(\delta_5=\delta_4=2\), and
	\(\delta_4=\delta_3=3\), since these would imply
	\(\rank A\le1,2,3\), respectively. Hence
	\[
	\delta_6\ge1,\qquad
	\delta_5\ge2,\qquad
	\delta_4\ge3,\qquad
	\delta_3\ge4.
	\]
	
	We claim that, under the same simplicity assumption,
	\[
	(\rho_3,\rho_4)\neq(8,12).
	\]
	Indeed, suppose that \(\rho_3=8\) and \(\rho_4=12\). Then
	\(\delta_3=4\). Since \(3\le\delta_4\le4\), if
	\(\delta_4=4\), then \(\delta_4=\delta_3=4\), and
	\cref{lem:endpoint-equal-loss} with \(p=4\) gives either
	\(\rank A\le4\), contradicting \(\rank A\ge5\), or
	\(V_A\subset S_1\). The latter is also impossible, since the
	pointwise nonnegativity of \(A\) would then make \(A\) a sum of
	squared norms. Thus \(\delta_4=3\).
	
	Now \(2\le\delta_5\le3\). Equality \(\delta_5=3\) would give
	\(\delta_5=\delta_4=3\), and
	\cref{lem:endpoint-equal-loss} would imply \(\rank A\le3\).
	Hence \(\delta_5=2\). Similarly, \(1\le\delta_6\le2\), and
	\(\delta_6=2\) would give \(\delta_6=\delta_5=2\) and hence
	\(\rank A\le2\). Therefore \(\delta_6=1\). It follows that
	\[
	\rho=\rho_7
	=\rho_4+\delta_4+\delta_5+\delta_6
	=12+3+2+1=18.
	\]
	On the other hand, the orthogonal \(3+4\) decomposition gives
	\(\rho\ge\rho_3+\rho_4=20\), a contradiction. This proves the claim.
	
	If all restrictions are simple, then
	\(\delta_2\ge\delta_3\ge4\). The bottom residual space \(Q_2\)
	satisfies \(\dim Q_2=\delta_2\) and \(L_2^*Q_2\subset H_2\).
	By \cref{lem:two-variable}, \(\rho_2\ge\delta_2+1\ge5\).
	Consequently \(\rho_3\ge9\) and \(\rho_4\ge13\), so
	\cref{lem:orthogonal-central-split} gives
	\(\rho\ge\rho_3+\rho_4\ge22\).

	We now distinguish the position of the first nonsimple restriction. 	Let \(p_*\) be the target dimension of the first nonsimple
	restriction, so that this step is
	\(H_{p_*+1}\to H_{p_*}\).
	
	Suppose first that \(p_*\ge4\). If the first nonsimple restriction is
	nonexceptional, then \cref{lem:nonexceptional-profile} gives
	\(\rho_3\ge B_3=8\) and \(\rho_4\ge B_4=13\). Hence the orthogonal
	\(3+4\) decomposition yields
	\(\rho\ge\rho_3+\rho_4\ge21\).
	If the first nonsimple restriction is exceptional, then
	\cref{lem:endpoint-exceptional} gives
	\[
	\rho\ge 7(p_*+1)-\binom{p_*+1}{2}\ge25>21.
	\]
	Thus the desired estimate holds whenever \(p_*\ge4\).
	
	It remains to consider \(p_*\le3\), together with the case in which
	all restrictions are simple. In all these configurations the restrictions
	\(H_7\to H_6 \to H_5 \to H_4\) are simple, so the
	estimates above give \(\delta_3\ge4\).

	Assume now that \(p_*=3\). If the first nonsimple restriction is
	exceptional, then \cref{lem:endpoint-exceptional} gives
	\[
	\rho\ge7\cdot4-\binom42=22.
	\]
	If it is nonexceptional, then
	\cref{lem:nonexceptional-profile} gives \(\rho_3\ge8\), while
	\(\delta_3\ge4\) gives \(\rho_4\ge12\).
	
	Finally, suppose that \(p_*=2\). If the first nonsimple restriction is
	nonexceptional, then \cref{lem:nonexceptional-profile} gives
	\(\rho_2\ge4\). Monotonicity through the first nonsimple step gives
	\(\delta_2\ge\delta_3\ge4\), and hence
	\(\rho_3=\rho_2+\delta_2\ge8\) and
	\(\rho_4=\rho_3+\delta_3\ge12\).
	
	If the first nonsimple restriction is exceptional, then the preceding
	restriction \(H_4\to H_3\) is simple and has \(\delta_3\ge4\).
	Its residual space \(Q_3\) satisfies \(L_3^*Q_3\subset H_3\).
	Choose a four-dimensional subspace \(Q'_3\subset Q_3\). By
	\cref{lem:filtered-macaulay},
	\[
	\rho_3\ge\dim(L_3^*Q'_3)\ge\gamma_3(4)=8,
	\]
	and therefore \(\rho_4=\rho_3+\delta_3\ge12\).
	
	Thus, in every remaining case, either the desired bound has already
	been proved, or
	\(\rho_3\ge8\) and \(\rho_4\ge12\). Since the borderline pair
	\((\rho_3,\rho_4)=(8,12)\) was excluded above, integrality gives
	\(\rho_3+\rho_4\ge21\). The orthogonal \(3+4\) decomposition then
	yields
	\[
	\rho\ge\rho_3+\rho_4\ge21.
	\]
	This completes the proof in dimension seven and hence in all remaining
	dimensions. Combining  \cref{thm: kappa big}, we finished the proof of  \cref{thm:main}.

\end{document}